\documentclass[11pt]{article}
\usepackage{amsthm, amsmath, amssymb, amsfonts, url, booktabs, tikz, setspace, fancyhdr, bm, mathrsfs}
\usepackage{hyperref}
\usepackage{geometry}
\geometry{verbose,tmargin=2.1cm,bmargin=2.1cm,lmargin=2.4cm,rmargin=2.4cm}
\usepackage{hyperref, enumerate}
\usepackage[shortlabels]{enumitem}
\usepackage[babel]{microtype}
\usepackage[english]{babel}
\usepackage[capitalise]{cleveref}
\usepackage{comment}
\usepackage{bbm}
\usepackage{csquotes}
\usepackage{mathabx}
\usepackage{tikz}
\usepackage{graphicx}
\usepackage{float}
\usepackage{amsmath}


\counterwithin{figure}{section}

\newtheorem{theorem}{Theorem}[section]

\newtheorem{lemma}[theorem]{Lemma}

\newtheorem{conj}[theorem]{Conjecture}
\newtheorem{claim}[theorem]{Claim}

\newtheorem{fact}[theorem]{Fact}

\theoremstyle{definition}

\newtheorem*{defn-non}{Definition}

\newtheorem{ques}[theorem]{Question}

\newlist{Case}{enumerate}{3}
\setlist[Case, 1]{%
    label           =   {\bfseries Case \arabic*.},
    labelindent=1em ,labelwidth=1cm, labelsep*=1em, leftmargin =!
}
\setlist[Case, 2]{%
    label           =   {\bfseries Subcase \arabic{Casei}.\arabic*.},
    labelindent=-1em ,labelwidth=1cm, labelsep*=1em, leftmargin =!
}
\setlist[Case, 3]{%
    label           =   {\bfseries Subsubcase \arabic{Casei}.\arabic{Caseii}.\arabic*.},
    labelindent=-1em ,labelwidth=1cm, labelsep*=1em, leftmargin =!
}

\newenvironment{poc}{\begin{proof}[Proof of claim]}{\end{proof}}

\newcommand{\TT}{\boldsymbol{T}}
\newcommand{\RR}{\boldsymbol{R}}
\usepackage{todonotes}



\newcommand*{\abs}[1]{\lvert#1\rvert}

\newcommand{\bT}{\boldsymbol{T}}
\newcommand{\bR}{\boldsymbol{R}}
\newcommand{\bv}{\boldsymbol{v}}
\newcommand{\bA}{\boldsymbol{A}}

\newcommand{\cF}{\mathcal{F}}
\newcommand{\cG}{\mathcal{G}}

\newcommand{\cT}{\mathcal{T}}

\title{Uniform set systems with small VC-dimension}

\author{
Ting-Wei Chao\thanks{
Department of Mathematics, Massachusetts Institute of Technology, Cambridge, MA, USA, Email: twchao@mit.edu}
\and 
Zixiang Xu\thanks{Extremal Combinatorics and Probability Group (ECOPRO), Institute for Basic Science (IBS), Daejeon, South Korea. Email: zixiangxu@ibs.re.kr. Supported by IBS-R029-C4.}
\and 
Chi Hoi Yip\thanks{School of Mathematics, Georgia Institute of Technology, Atlanta, USA. Email: cyip30@gatech.edu}
\and
Shengtong Zhang\thanks{Department of Mathematics, Stanford University, USA. Email address: stzh1555@stanford.edu}
}

\begin{document}
\maketitle

\begin{abstract}

We investigate the longstanding problem of determining the maximum size of a \((d+1)\)-uniform set system with VC-dimension at most \(d\). Since the seminal 1984 work of Frankl and Pach, which established the elegant upper bound \(\binom{n}{d}\), this question has resisted significant progress. The best-known lower bound is \(\binom{n-1}{d} + \binom{n-4}{d-2}\), obtained by Ahlswede and Khachatrian, leaving a substantial gap of $\binom{n-1}{d-1}-\binom{n-4}{d-2}$. Despite decades of effort, improvements to the Frankl--Pach bound have been incremental at best: Mubayi and Zhao introduced an \(\Omega_d(\log{n})\) improvement for prime powers \(d\), while Ge, Xu, Yip, Zhang, and Zhao achieved a gain of 1 for general \(d\).

In this work, we provide a purely combinatorial approach that significantly sharpens the Frankl--Pach upper bound. Specifically, for large $n$, we demonstrate that the Frankl--Pach bound can be improved to \(\binom{n}{d} - \binom{n-1}{d-1} + O_d(n^{d-1 - \frac{1}{4d-2}})=\binom{n-1}{d}+O_d(n^{d-1 - \frac{1}{4d-2}})\). This result completely removes the main term $\binom{n-1}{d-1}$ from the previous gap between the known lower and upper bounds. It also offers fresh insights into the combinatorial structure of uniform set systems with small VC-dimension. In addition, the original Erd\H{o}s--Frankl--Pach conjecture, which sought to generalize the EKR theorem in the 1980s, has been disproven. We propose a new refined conjecture that might establish a sturdier bridge between VC-dimension and the EKR theorem, and we verify several specific cases of this conjecture, which is of independent interest. 
\end{abstract}

\section{Introduction}
\subsection{Background}
Extremal set theory concerns the problem of determining the maximum or the minimum size of a set system with certain properties. This area of research originated in 1961 with the seminal work of Erd\H{o}s, Ko, and Rado~\cite{1961EKR}, which states that for all $n\ge 2k$, the maximum size of a $k$-uniform set system on $[n]$ in which any two members have non-empty intersection, is given by $\binom{n-1}{k-1}$; moreover, when $n\geq 2k+1$, all maximum intersecting families are trivially intersecting. This result laid the groundwork for a vibrant and extensive field of study. Over time, their theorem has been significantly generalized to encompass a variety of set systems, including those with subsets of varying sizes, families satisfying broader intersection conditions, and even analogues in algebraic and geometric  contexts~\cite{2024JEMS,2020FFCOnj,2021ADVEC}. 

In this paper, we focus on problems involving the \emph{Vapnik-Chervonenkis dimension} (\emph{VC-dimension}), which is a classical complexity measure for set systems and hypothesis classes in machine learning. Formally, let \(\mathcal{F}\) be a family of subsets of a ground set \(X\). The VC-dimension of \(\mathcal{F}\), denoted by \(\mathrm{VC}(\mathcal{F})\), is the largest integer \(d\) such that there exists a set \(S = \{x_1, x_2, \dots, x_d\} \subseteq X\) with the following property:  
\[
\text{For every subset } A \subseteq S, \text{ there exists } F \in \mathcal{F} \text{ such that } F \cap S = A.
\]  
In other words, the family \(\mathcal{F}\) shatters \(S\) if it can realize all \(2^d\) possible subsets of \(S\). In particular, if no such \(S\) of size \(d+1\) exists, then \(\mathrm{VC}(\mathcal{F}) \leq d\).  

The foundational Sauer-Shelah-Perles lemma was provided by three different groups of researchers~\cite{1972JCTASauer,1972PACJMShelah,1971TPAVCVC}, who completely determined the maximum size of a set system with a given VC-dimension.

\begin{lemma}\label{lem:SSLemma}
    Let $\mathcal{F}\subseteq 2^{V}$ be a set system with VC-dimension at most $d$, then
$|\mathcal{F}|\le\sum\limits_{i=0}^{d}\binom{|V|}{i}$.
\end{lemma}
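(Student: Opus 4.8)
The plan is to prove this by induction on the size of the ground set $|V|$, peeling off one element at a time via a trace decomposition. The base case $V = \emptyset$ is immediate: then $\mathcal{F} \subseteq \{\emptyset\}$, so $|\mathcal{F}| \le 1 = \binom{0}{0}$, matching the bound for every $d \ge 0$.

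For the inductive step, fix any $x \in V$ and set $W := V \setminus \{x\}$. I would introduce two families on $W$: the trace $\mathcal{F}|_W := \{F \setminus \{x\} : F \in \mathcal{F}\}$, and the link $\mathcal{F}_x := \{G \subseteq W : G \in \mathcal{F} \text{ and } G \cup \{x\} \in \mathcal{F}\}$. The starting observation is the counting identity
\[
|\mathcal{F}| = |\mathcal{F}|_W| + |\mathcal{F}_x|,
\]
which holds because each $G \in \mathcal{F}|_W$ has exactly one preimage under $F \mapsto F \setminus \{x\}$ (namely $G$ or $G \cup \{x\}$), except when both $G$ and $G \cup \{x\}$ lie in $\mathcal{F}$, i.e.\ when $G \in \mathcal{F}_x$, in which case it has two.

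Next I would bound the VC-dimensions of the two pieces. If $\mathcal{F}|_W$ shatters some $T \subseteq W$, the shattering is witnessed purely by coordinates in $W$, so $\mathcal{F}$ shatters $T$ as well, giving $\mathrm{VC}(\mathcal{F}|_W) \le d$. If $\mathcal{F}_x$ shatters some $T \subseteq W$, then for every $A \subseteq T$ there is $G \in \mathcal{F}_x$ with $G \cap T = A$, and then both $G$ and $G \cup \{x\}$ belong to $\mathcal{F}$, realizing $A$ and $A \cup \{x\}$ on $T \cup \{x\}$; hence $\mathcal{F}$ shatters $T \cup \{x\}$, forcing $|T| + 1 \le d$, so $\mathrm{VC}(\mathcal{F}_x) \le d - 1$. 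Applying the induction hypothesis on the ground set $W$ of size $|V|-1$ to each family and then Pascal's rule yields
\[
|\mathcal{F}| \le \sum_{i=0}^{d}\binom{|V|-1}{i} + \sum_{i=0}^{d-1}\binom{|V|-1}{i} = \sum_{i=0}^{d}\left(\binom{|V|-1}{i}+\binom{|V|-1}{i-1}\right) = \sum_{i=0}^{d}\binom{|V|}{i},
\]
completing the induction.

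There is no deep obstacle, but the step that really matters — and that a naive argument misses — is the VC-dimension drop for the link: bounding $|\mathcal{F}|$ crudely by $2|\mathcal{F}|_W|$ loses a factor of roughly two, whereas isolating the ``doubled'' traces into $\mathcal{F}_x$ and observing that this family cannot shatter any set of size $d$ is exactly what makes the two binomial sums telescope through Pascal's identity. An alternative, non-inductive route is the down-compression argument: repeatedly replace each $F \ni x$ by $F \setminus \{x\}$ whenever $F \setminus \{x\} \notin \mathcal{F}$; this preserves $|\mathcal{F}|$ and does not increase the VC-dimension, and since a down-closed family shatters each of its own members, every member then has size at most $d$.
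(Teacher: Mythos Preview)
Your proof is correct and is in fact the classical inductive proof of the Sauer--Shelah--Perles lemma; the key step (the VC-dimension drop for the link $\mathcal{F}_x$) is handled cleanly, and the alternative down-compression argument you sketch is also valid. The paper, however, does not supply its own proof of this lemma: it is stated as \cref{lem:SSLemma} and attributed to \cite{1972JCTASauer,1972PACJMShelah,1971TPAVCVC} as a foundational cited result, so there is no in-paper argument to compare against.
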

Note that the Hamming ball of radius $d$ has VC-dimension at most $d$, therefore the problem for non-uniform set systems has already been completely solved. However, the corresponding question for uniform set systems remains open.
\begin{ques}\label{question}
   What is the maximum size of $\mathcal{F}\subseteq\binom{[n]}{d+1}$ with VC-dimension at most \(d\)? 
\end{ques}
When $n\geq 2d+1$, the star \(\binom{[n-1]}{d} \cup \{n\} \subseteq \binom{[n]}{d+1}\) has VC-dimension \(d\), and it is a maximum intersecting family by the Erd\H{o}s-Ko-Rado theorem. To see the connection between Question~\ref{question} and the Erd\H{o}s-Ko-Rado theorem, observe that if $\mathcal{F}=\{F_1,F_2,\ldots, F_m\} \subset \binom{[n]}{d+1}$, then the condition of VC$(\mathcal{F})\leq d$ can be interpreted as follows: for each $1\leq i \leq m$, there exists a subset $B_{i}\subsetneq F_{i}$ such that $F\cap F_{i}\neq B_{i}$ for any $F\in\mathcal{F}$. Thus, if $\mathcal{F}$ is an intersecting family, then $B_i$ can be taken to be the empty set for all $i$. 

In the 1980s, Erd\H{o}s~\cite{1984Erdos} and Frankl and Pach~\cite{1984Franklpach} conjectured that \(\binom{n-1}{d}\) is the correct answer to Question~\ref{question} when $n$ is sufficiently large compared to $d$. If true, this conjecture would generalize the Erd\H{o}s-Ko-Rado Theorem. However, in 1997, Ahlswede and Khachatrian~\cite{1997CombFan} disproved this conjecture by constructing a \((d+1)\)-uniform set system with VC-dimension \(d\) and size \(\binom{n-1}{d} + \binom{n-4}{d-2}\) for \(n \geq 2(d+1)\). Later, Mubayi and Zhao~\cite{2007JAC} provided infinitely many non-isomorphic \((d+1)\)-uniform set systems achieving size \(\binom{n-1}{d} + \binom{n-4}{d-2}\), and conjectured that this is the correct answer to~\cref{question}.

 On the other hand, Frankl and Pach~\cite{1984Franklpach} provided an upper bound via an elegant algebraic proof.
 \begin{theorem}[\cite{1984Franklpach}]\label{thm:FranklPach}
   Let $n,d$ be positive integers with $n\ge d+1$. If $\mathcal{F}\subseteq\binom{[n]}{d+1}$ is a set system with VC-dimension at most $d$, then $|\mathcal{F}|\le\binom{n}{d}$.
\end{theorem}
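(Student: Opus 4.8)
The plan is to run the polynomial (linear-algebra) method. To each $F\in\mathcal F$ I will attach a multilinear polynomial of degree at most $d$ whose evaluations on the characteristic vectors $\{\mathbf v_G:G\in\mathcal F\}$ behave like a delta function; linear independence then forces $|\mathcal F|$ to be at most the dimension of the ambient space, and the key point --- that all the $0/1$-vectors in sight have the same Hamming weight $d+1$ --- cuts this dimension down from the Sauer--Shelah value $\sum_{i=0}^d\binom ni$ to $\binom nd$.

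First, the combinatorial input. Fix $F\in\mathcal F$. Since $|F|=d+1$ exceeds $\mathrm{VC}(\mathcal F)$, the family $\mathcal F$ does not shatter $F$, so there is a proper subset $A_F\subsetneq F$ with $G\cap F\neq A_F$ for every $G\in\mathcal F$. I attach to $F$ the multilinear polynomial over $\mathbb R$
\[
p_F(\mathbf x)=\prod_{i\in A_F}x_i\cdot\prod_{j\in F\setminus A_F}(x_j-1),
\]
for which a direct computation gives $p_F(\mathbf v_G)=(-1)^{|F\setminus A_F|}\,\mathbbm{1}[\,G\cap F=A_F\,]$, which vanishes for all $G\in\mathcal F$ by the choice of $A_F$. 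The unique top-degree monomial of $p_F$ is $\prod_{i\in F}x_i$ with coefficient $1$, so $\tilde p_F:=p_F-\prod_{i\in F}x_i$ has degree at most $d$. Since $\prod_{i\in F}x_i$ evaluated at $\mathbf v_G$ equals $\mathbbm{1}[\,F\subseteq G\,]=\delta_{F,G}$ (as $|F|=|G|=d+1$), we get $\tilde p_F(\mathbf v_G)=-\delta_{F,G}$ for all $F,G\in\mathcal F$. Hence $\{\tilde p_F:F\in\mathcal F\}$ is linearly independent, and remains so after restriction to the set $W$ of all $0/1$-vectors of Hamming weight $d+1$.

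Now the crux: showing that the restriction to $W$ of each $\tilde p_F$ lies in the $\binom nd$-dimensional span of the degree-$d$ monomials. For this I would prove the identity: for every $S\subseteq[n]$ with $|S|\le d$ and every $\mathbf v_F$ with $|F|=d+1$,
\[
\prod_{i\in S}x_i=\frac{1}{\,d+1-|S|\,}\sum_{\substack{T\supseteq S\\ |T|=d}}\prod_{i\in T}x_i,
\]
which follows by evaluating both sides at $\mathbf v_F$ and counting the $d$-subsets $T$ with $S\subseteq T\subseteq F$ (there are exactly $d+1-|S|$ of them when $S\subseteq F$, and none otherwise). Consequently the restriction to $W$ of any multilinear polynomial of degree $\le d$, in particular of each $\tilde p_F$, is a linear combination of the restrictions to $W$ of the $\binom nd$ monomials $\prod_{i\in T}x_i$ with $|T|=d$. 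Combining this with the linear independence above yields $|\mathcal F|=\dim\operatorname{span}\{\tilde p_F\big|_W:F\in\mathcal F\}\le\binom nd$.

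I expect the last reduction to be the main obstacle: a naive count of the monomials appearing in the $\tilde p_F$'s only gives the weaker, Sauer--Shelah-type bound $\sum_{i\le d}\binom ni$, so one must genuinely use that every characteristic vector involved has the \emph{same} weight $d+1$ in order to render the low-degree monomials redundant. (It is also worth noting that for $d+1\le n\le 2d+1$ the theorem is trivial, since there $\binom nd\ge\binom n{d+1}\ge|\mathcal F|$; so one may freely assume $n$ large whenever convenient.)
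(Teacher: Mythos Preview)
Your proof is correct. The paper does not give a self-contained proof of this theorem (it is cited from Frankl--Pach, who used the higher-order inclusion matrix), but in \cref{sec:PolynomialMethod} it works with the same polynomials: your $\tilde p_F$ is exactly the $f_{F_k}$ appearing in \cref{claim:LI}. The only difference is bookkeeping in the dimension count. You restrict evaluations to the weight-$(d+1)$ slice $W$ and use the identity $\prod_{i\in S}x_i=\frac{1}{d+1-|S|}\sum_{T\supseteq S,\,|T|=d}\prod_{i\in T}x_i$ on $W$ to push every monomial up to degree $d$; the paper instead adjoins the ``filler'' polynomials $h_H(\mathbf x)=\sum_{j\notin H}x_j\prod_{i\in H}x_i+(|H|-d-1)\prod_{i\in H}x_i$ for $|H|\le d-1$ and shows $\{f_{F_k}\}\cup\{h_H\}$ is linearly independent inside the full space of multilinear polynomials of degree $\le d$. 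These two devices are equivalent --- each $h_H$ vanishes identically on $W$, and iterating that vanishing from degree $|H|$ up to $d$ recovers your identity --- but the paper's formulation has the advantage that it leaves room to insert further polynomials into the family, which is precisely how the improvements in \cref{thm:MainPoly} are obtained.
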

 Later, Mubayi and Zhao~\cite{2007JAC} showed that when $d$ is a prime power and $n$ is sufficiently large compared to $d$, the Frankl-Pach upper bound can be improved to $\binom{n}{d} - \Omega_{d}(\log{n})$. In a recent development~\cite{2024FranklPach}, Ge, Zhao, and three of the authors showed that the Frankl--Pach upper bound can be improved to $\binom{n}{d} - 1$ for all positive integers $n,d$ with $d\ge 2$ and $n\geq 2d+2$. These results, while having elegant proofs, reveal how challenging it has been to make meaningful advances.
\subsection{Improvement on the Frankl--Pach bound}
The main term of the current gap between the upper and lower bounds is \(\binom{n-1}{d-1}\), and the aforementioned improvements in~\cite{2024FranklPach,2007JAC} to the upper bound have been relatively modest. The main result of this paper is a significant improvement of the longstanding Frankl--Pach upper bound, eliminating the main term \(\binom{n-1}{d-1}\) from the gap between lower and upper bounds.

\begin{theorem}\label{thm:GeneralD}
    Let $d\ge 2$ be a positive integer and $n$ be a sufficiently large integer compared to $d$. If $\mathcal{F}\subseteq\binom{[n]}{d+1}$ is a set system with VC-dimension at most $d$, then $|\mathcal{F}|\le\binom{n-1}{d}+C_{d}\cdot n^{d-1-\frac{1}{4d-2}}$ for some constant $C_{d}>0$.
\end{theorem}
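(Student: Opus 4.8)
The plan is to split off the ``trivial'' part of the gap by a counting argument that uses no hypothesis on $\mathcal F$ at all, and to push all the difficulty into bounding a small ``defect family'', which is where the VC-dimension condition gets used.

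\medskip
\noindent\textbf{Step 1: reduction.} For $D\in\binom{[n]}{d}$ write $\deg(D)=|\{F\in\mathcal F:D\subseteq F\}|$, so that $0\le\deg(D)\le n-d$ and $\sum_D\deg(D)=(d+1)|\mathcal F|$. Call $F\in\mathcal F$ \emph{defective} if every $d$-subset of $F$ has degree at least $2$, and let $\mathcal F''$ be the family of defective sets. If $F\in\mathcal F$ is not defective, assign to it a $d$-subset $D(F)\subset F$ with $\deg(D(F))=1$; this assignment is injective, since any $D$ with $\deg(D)=1$ has a unique superset in $\mathcal F$. Hence $|\mathcal F|-|\mathcal F''|\le |\{D:\deg(D)=1\}|$, and feeding this into $\sum_D\deg(D)=(d+1)|\mathcal F|$ together with $\deg(D)\le n-d$ and $\bigl|\binom{[n]}{d}\bigr|=\binom nd$ gives, after a short computation,
\[
|\mathcal F|\ \le\ \binom{n-1}{d}+\frac{n-d-1}{n}\,|\mathcal F''|\ \le\ \binom{n-1}{d}+|\mathcal F''|.
\]
Thus \cref{thm:GeneralD} reduces to proving $|\mathcal F''|=O_d\bigl(n^{\,d-1-1/(4d-2)}\bigr)$. (Members of $\mathcal F\setminus\mathcal F''$ automatically carry a $d$-element forbidden trace; only for $\mathcal F''$ is $\mathrm{VC}(\mathcal F)\le d$ needed.)

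\medskip
\noindent\textbf{Step 2: structure forced on $\mathcal F''$ by VC-dimension.} Fix $F\in\mathcal F''$. Every $d$-subset of $F$ arises as a trace $G\cap F$ with $G\ne F$ (by defectiveness), and $F$ itself is a trace, so since $\mathcal F$ cannot shatter the $(d{+}1)$-set $F$ there is a forbidden trace $B_F\subsetneq F$ with $|B_F|\le d-1$; put $R_F=F\setminus B_F$, so $|R_F|=d+1-|B_F|\ge 2$. Writing $\mathcal L_B=\{G\setminus B:G\in\mathcal F,\ B\subseteq G\}$ for the link of $B$ (a $(d+1-|B|)$-uniform family), the statement ``$B_F$ is a forbidden trace of $F$'' is exactly ``every member of $\mathcal L_{B_F}$ meets $R_F$'', i.e.\ $R_F$ is a transversal of $\mathcal L_{B_F}$, while $R_F\in\mathcal L_{B_F}$. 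Consequently, for each fixed $B$ the family $\mathcal R_B=\{R_F:F\in\mathcal F'',\ B_F=B\}$ is an \emph{intersecting} family of $(d+1-|B|)$-uniform sets. Defectiveness moreover forces a local ``spreading'' of the links: for each $y\in R_F$, the link $\mathcal L_{B_F}$ also contains a set $(R_F\setminus\{y\})\cup\{z\}$ with $z\notin B_F\cup R_F$.

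\medskip
\noindent\textbf{Step 3: bounding $|\mathcal F''|$ --- the crux, and the main obstacle.} This is where essentially all the work lies, and where I expect the exponent to be born. The bare intersecting bound $|\mathcal R_B|=O(n^{\,d-|B|})$, summed over the $\binom{n}{|B|}$ choices of $B$, only returns the trivial $O(n^{d})$; even the Hilton--Milner sharpening, which cuts \emph{non-star} intersecting families down to $O(n^{\,d-|B|-1})$, only yields $O(n^{d-1})$ --- still short of the target. The two further inputs that must be exploited are (a) the global density constraint $\sum_D\deg(D)=(d+1)|\mathcal F|=O_d(n^{d})$, which forces all but $O_d(n^d/t)$ of the $d$-sets to have degree at most a chosen threshold $t=n^{\theta}$; and (b) the local-spread property of Step 2, which prevents the links $\mathcal L_{B_F}$ from being degenerate. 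The natural plan is to split $\mathcal F''$ by whether some $d$-subset of $F$ has degree exceeding $t$: in the ``concentrated'' regime one argues that $\mathcal R_B$ behaves like a star whose centre together with $B_F$ pins down a high-degree $d$-set, and these are scarce and can be charged against $\sum_D\deg(D)$; in the ``spread'' regime one runs a K\H ov\'ari--S\'os--Tur\'an-type double count on the incidences between the forbidden traces $B_F$ and the transversals $R_F$, using that $R_F$ has the fixed uniformity $d+1-|B_F|\ge 2$. Optimising $\theta$ to balance the two regimes, \emph{uniformly over} $|B_F|\in\{0,\dots,d-1\}$, is exactly what produces the exponent $d-1-\tfrac1{4d-2}$: the $2d-1$ should be the worst case of the K\H ov\'ari--S\'os--Tur\'an exponent over these uniformities and the factor $2$ the usual square from the double count. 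The genuine difficulty --- the thing I expect to consume most of the paper --- is ruling out the configuration in which $\mathcal F''$ is sustained by many distinct forbidden traces $B$, each carrying a moderately large near-star intersecting core $\mathcal R_B$ sitting inside a link $\mathcal L_B$ that is itself spread; controlling this uniformly over $|B|$ and converting it into either a contradiction with $\mathrm{VC}(\mathcal F)\le d$ or the claimed polynomial saving is the heart of the matter.
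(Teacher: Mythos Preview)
Your Step~1 inequality is clean and correct: $|\mathcal F|\le\binom{n-1}{d}+\frac{n-d-1}{n}\,|\mathcal F''|$. But the sentence ``Thus \cref{thm:GeneralD} reduces to proving $|\mathcal F''|=O_d\bigl(n^{\,d-1-1/(4d-2)}\bigr)$'' is false, and this kills the whole plan. In the Ahlswede--Khachatrian / Mubayi--Zhao construction (the family with $|\mathcal F|=\binom{n-1}{d}+\binom{n-4}{d-2}$), every set containing $\{1,2\}$ is defective in your sense: each of its $d$-subsets has degree at least~$2$ in~$\mathcal F$. Hence $|\mathcal F''|\ge\binom{n-2}{d-1}=\Theta(n^{d-1})$ in that example, so the target bound $|\mathcal F''|=o(n^{d-1})$ is simply not true for all families with $\mathrm{VC}(\mathcal F)\le d$. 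No amount of K\H ov\'ari--S\'os--Tur\'an or threshold balancing in Step~3 can rescue this --- the statement you are trying to prove there is wrong. Your reduction is too lossy: it throws away the fact that a large defective family forces the \emph{non}-defective part to be correspondingly small.

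The paper does not attempt to bound $|\mathcal F''|$ on its own. Instead it finds a small ``near-transversal'' set $J\subset[n]$ of size roughly $n^{(d-1)/(2d-1)}$ that meets almost every $F\in\mathcal F$; this comes from a double count on the links $\mathcal X_v=\{F_k\setminus\{v\}:v\in B_k\}$ (which have VC-dimension at most $d-1$, hence $|\mathcal X_v|\le\binom{n-1}{d-1}$ by Frankl--Pach one level down) combined with Kruskal--Katona on the complementary links $\mathcal Y_v$. The whole of $\mathcal F$ is then partitioned into six pieces according to how $F_k$ and $B_k$ sit relative to $J$, and each piece is controlled by near-injections into $\binom{[n]\setminus J}{d}$ or $J\times\binom{[n]\setminus J}{d-1}$. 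In the extremal example one has $J=\{1,2\}$ and the $\Theta(n^{d-1})$ defective sets are absorbed exactly by the $(|J|-1)\binom{n-|J|}{d-1}$ term rather than treated as an error. The exponent $-\tfrac{1}{4d-2}$ arises from optimising $|J|$ against the leakage in the near-transversal lemma, not from a K\H ov\'ari--S\'os--Tur\'an exponent. A separate ingredient you did not anticipate is the bounded-VC-dimension sunflower lemma, which shows that only $O_d(1)$ sets $F_k$ can share a given $B_k$ --- this is what makes the $|B_k|\le d-1$ contribution tractable at all.
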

Our result serves as additional evidence that the correct answer to \cref{question} is \(\binom{n - 1}{d} + \binom{n-4}{d-2}\); see~\cref{sec:main-proof} and~\cref{sec:conclusion} for related discussions. All previous upper bounds~\cite{1984Franklpach,2024FranklPach,2007JAC} for~\cref{question} were derived using linear algebra methods. Surprisingly, our proof of~\cref{thm:GeneralD} employs a purely combinatorial approach instead. We present the entire proof in \cref{sec:main-proof}.

We still believe that linear algebra methods have significant potential for addressing this problem and may even provide a complete resolution. In \cref{sec:PolynomialMethod}, we refine the polynomial method used in \cite{2024FranklPach}. This refinement leads to \cref{thm:MainPoly}, which provides a polynomial improvement over \cref{thm:FranklPach} by \(\Omega_{d}(n^{d-4})\). While this result is weaker than \cref{thm:GeneralD}, we believe that its proof is of independent interest.

\subsection{A modified conjecture generalizing the Erd\H{o}s-Ko-Rado theorem}
As mentioned earlier, one of the original motivations for studying \cref{question}, initiated by Erd\H{o}s~\cite{1984Erdos}, Frankl and Pach~\cite{1984Franklpach} in the 1980s, was to generalize the celebrated Erd\H{o}s-Ko-Rado theorem~\cite{1961EKR}. However, this attempt was refuted by Ahlswede and Khachatrian~\cite{1997CombFan}.  

Here we explore a different approach to generalizing the Erd\H{o}s-Ko-Rado theorem, this time in the context of VC-dimension theory. Specifically, we propose the following ``uniform Erd\H{o}s--Frankl--Pach'' conjecture as a potential correct form.
\begin{conj}\label{conj:TrueGeneralization}
   Let $n\ge 2(d+1)$ and let \(0 \leq s \leq d\). Assume $\mathcal{F} \subseteq \binom{[n]}{d+1}$ is a set family such that for every set \(F \in \mathcal{F}\), there exists a subset \(B_F \subseteq F\) of size \(s\) such that \(F \cap F' \neq B_F\) for all \(F' \in \mathcal{F}\), then \(|\mathcal{F}|\le\binom{n-1}{d}\).
\end{conj}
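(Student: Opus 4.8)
The plan is to attack the two extreme values of $s$ first and then try to interpolate. For $s=0$ the hypothesis says precisely that $F\cap F'\neq\emptyset$ for all $F,F'\in\mathcal F$, i.e.\ $\mathcal F$ is intersecting, so the bound $|\mathcal F|\le\binom{n-1}{d}$ is immediate from the Erd\H{o}s--Ko--Rado theorem, valid since $n\ge 2(d+1)$; this is exactly the case that motivates the conjecture. For the general strategy I would recast the VC-type hypothesis as a \emph{cross-intersecting} statement about links. Fix one choice of $B_F$ for each $F$. For an $s$-set $B$ occurring as some $B_F$, let $\mathcal A_B=\{F\setminus B:\ F\in\mathcal F,\ B_F=B\}$ and $\mathcal L_B=\{F'\setminus B:\ F'\in\mathcal F,\ B\subseteq F'\}$, both regarded as families of $(d+1-s)$-subsets of $[n]\setminus B$. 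If $B\subseteq F'$, then $B=B_F\subseteq F\cap F'$, and the hypothesis $F\cap F'\neq B_F$ forces an element $x\in(F\cap F')\setminus B$, that is $x\in(F\setminus B)\cap(F'\setminus B)$; hence $\mathcal A_B$ and $\mathcal L_B$ are cross-intersecting with $\emptyset\neq\mathcal A_B\subseteq\mathcal L_B$. Since $|\mathcal F|=\sum_B|\mathcal A_B|$ (each $F$ lands in $\mathcal A_{B_F}$ and distinct sets stay distinct), while $\sum_B|\mathcal L_B|=\binom{d+1}{s}|\mathcal F|$ (each $F$ contributes $F\setminus B$ to $\mathcal L_B$ for each of its $\binom{d+1}{s}$ many $s$-subsets $B$), bounding $|\mathcal F|$ becomes a weighted optimization over cross-intersecting pairs subject to a fixed budget of total ``link mass''.

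For $s=d$ this program closes completely, and I would present it as a clean stand-alone case. Here $|B_F|=d$, so if $B_F\subseteq F'$ with $F'\neq F$ then $B_F\subseteq F\cap F'\subsetneq F$ forces $F\cap F'=B_F$, contradicting the hypothesis; thus every $F$ has a \emph{private} $d$-subset $B_F$ lying in no other member, and in particular $F\mapsto B_F$ is injective, so $|\{B_F:F\in\mathcal F\}|=|\mathcal F|$. Now double count the incident pairs $(F,B)$ with $F\in\mathcal F$, $B\in\binom{[n]}{d}$, $B\subseteq F$: there are $(d+1)|\mathcal F|$ of them; each private $B_F$ contributes exactly one, while every other $d$-set arising as a subset of a member has degree at most $n-d$ into $\mathcal F$ and belongs to $\binom{[n]}{d}\setminus\{B_F:F\in\mathcal F\}$, a set of size $\binom{n}{d}-|\mathcal F|$. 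Hence $(d+1)|\mathcal F|\le|\mathcal F|+(n-d)\bigl(\binom{n}{d}-|\mathcal F|\bigr)$, which rearranges to $n|\mathcal F|\le(n-d)\binom{n}{d}$, i.e.\ $|\mathcal F|\le\binom{n-1}{d}$ (this argument needs only $n\ge d+1$, and equality forces the star-like structure).

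For intermediate $s$ the difficulty becomes genuinely global. The route I would try is: for each occurring $B$, invoke stability forms of the cross-intersecting inequality in $\binom{[n]\setminus B}{\,d+1-s}$ — a single $A\in\mathcal A_B$ already gives $|\mathcal L_B|\le\binom{n-s}{d+1-s}-\binom{n-d-1}{d+1-s}$, and if $\mathcal L_B$ is close to the full family then $\mathcal A_B$ must be small and essentially a star, mirroring the EKR star at a single vertex — and then feed these local estimates into the identity $\sum_B|\mathcal L_B|=\binom{d+1}{s}|\mathcal F|$, together with the contribution of the $s$-sets that do not occur as any $B_F$. The main obstacle is that the cross-intersecting bounds are not tight enough set-by-set: closing the gap to exactly $\binom{n-1}{d}$ seems to require a joint argument showing that the choices of $B_F$ over all $F\in\mathcal F$ are \emph{coherent}, so that the links collectively behave like one EKR-type star rather than many unrelated ones, and this coordination step is what I do not see how to carry out for all $(s,d)$. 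Accordingly I would expect only the endpoints $s\in\{0,d\}$ and small cases (such as small $d$, where the complement of the shadow can be analyzed directly and the link families are low-uniform enough to apply known cross-intersecting-graph results) to be within reach, consistent with the conjecture remaining open in general.
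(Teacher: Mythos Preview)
The statement is a conjecture, and the paper likewise only resolves special cases: $s\in\{0,d\}$ for all $n\ge 2(d+1)$, and $s=1$ for $n$ large. Your handling of $s=0$ is the same as the paper's (direct EKR). Your argument for $s=d$ is correct and is essentially the paper's main proof (Claim~\ref{claim:IndexD}) read through the complementary lens: the paper double counts pairs $(B_i,Y)$ with $Y\in\binom{[n]}{d+1}\setminus\mathcal F$ and $B_i\subseteq Y$ to get $(n-d-1)\,|\mathcal F|\le (d+1)\bigl(\binom{n}{d+1}-|\mathcal F|\bigr)$, while you double count pairs $(F,B)$ with $B\in\binom{[n]}{d}$ and $B\subseteq F\in\mathcal F$ to get $(d+1)\,|\mathcal F|\le |\mathcal F|+(n-d)\bigl(\binom{n}{d}-|\mathcal F|\bigr)$; both rearrange to $n\,|\mathcal F|\le (n-d)\binom{n}{d}$. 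The paper also gives a second proof of $s=d$ via a partial-shadow inequality (Theorem~\ref{thm:PartialShadow}), which you do not use.

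Where your plan and the paper diverge is the intermediate range. The paper actually settles $s=1$ (for large $n$) by a route that is \emph{not} a cross-intersecting budget argument: for each $a$ with many sets $F$ satisfying $B_F=\{a\}$, the link $\{F\setminus\{a\}:B_F=\{a\}\}$ is intersecting, and a strengthened Hilton--Milner pins it down to a common element $f(a)$; one then shows that every $F\ni a$ must also contain $f(a)$, iterates $a\mapsto f(a)$ until hitting a ``bad'' element, and finishes with a casework/stability analysis (yielding in fact a gap of order $n^{d-1}$ off the star). Your proposed mechanism---bounding $\sum_B|\mathcal A_B|$ against the budget $\sum_B|\mathcal L_B|=\binom{d+1}{s}|\mathcal F|$ via local cross-intersecting inequalities---is a reasonable heuristic but, as you acknowledge, the set-by-set bounds are too loose, and the paper's $s=1$ argument suggests the right extra ingredient is precisely the ``coherence'' you flag: the common centers $f(a)$ force the various links to point at a single star. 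So your endpoint proofs are fine and aligned with the paper, but your sketch for intermediate $s$ does not reach $s=1$, whereas the paper does.
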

  In this paper, we verify this conjecture for several specific cases in \cref{sec:EKR}.
\begin{theorem}\label{thm:Verification}  
\cref{conj:TrueGeneralization} holds in the following cases:  
\begin{itemize}  
    \item $s \in \{0, d\}$, or
    \item $s=1$, and \(n\) is sufficiently large compared to $d$.
\end{itemize}  
\end{theorem}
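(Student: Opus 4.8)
I would split into the three cases; the first two are short and the case $s=1$ is the substantial one.

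For $s=0$ the hypothesis simply says that $B_F=\emptyset$ is a valid choice for every $F$, i.e.\ $\mathcal{F}\subseteq\binom{[n]}{d+1}$ is intersecting, and since $n\ge 2(d+1)$ the Erd\H{o}s--Ko--Rado theorem gives $|\mathcal{F}|\le\binom{n-1}{d}$. For $s=d$ we have $B_F=F\setminus\{a_F\}$ for a unique $a_F\in F$, and since distinct $(d+1)$-sets meet in at most $d$ points, $F\cap F'=B_F$ is equivalent to $B_F\subseteq F'$ whenever $F'\ne F$; thus the hypothesis becomes $B_F\not\subseteq F'$ for all $F'\in\mathcal{F}\setminus\{F\}$. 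Setting $X_F:=B_F$ and $Y_F:=[n]\setminus F$ one gets a set-pair system with $X_F\cap Y_F=\emptyset$, with $X_F\cap Y_{F'}=B_F\setminus F'\ne\emptyset$ whenever $F\ne F'$, and with $|X_F|=d$, $|Y_F|=n-d-1$ for every $F$; the Bollob\'as set-pair inequality then gives $|\mathcal{F}|\le\binom{d+(n-d-1)}{d}=\binom{n-1}{d}$. (Both bounds are attained by a star, as one expects.)

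Now the case $s=1$ with $n$ large. For each $F$ fix a valid element $b_F\in F$, so that no $F'\in\mathcal{F}$ satisfies $F\cap F'=\{b_F\}$, and write $G_F:=F\setminus\{b_F\}$. For $v\in[n]$ let $\mathcal{G}_v:=\{F'\setminus\{v\}:F'\in\mathcal{F},\ v\in F'\}\subseteq\binom{[n]\setminus\{v\}}{d}$ be the link of $v$. Since $F\cap F'=\{b_F\}$ means exactly that $b_F\in F'$ and $G_F\cap(F'\setminus\{b_F\})=\emptyset$, the hypothesis says precisely that $G_F$ meets every member of $\mathcal{G}_{b_F}$. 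Partitioning $\mathcal{F}=\bigsqcup_v\mathcal{F}_v$ by the value of $b_F$ and putting $\mathcal{L}_v:=\{G_F:F\in\mathcal{F}_v\}$, the map $F\mapsto G_F$ is injective on $\mathcal{F}_v$, so $|\mathcal{F}|=\sum_v|\mathcal{L}_v|$, and each $\mathcal{L}_v\subseteq\mathcal{G}_v$ is an intersecting $d$-uniform family on $[n]\setminus\{v\}$. From here I would run a stability argument. If $\mathcal{G}_v$ has $\nu$ pairwise disjoint members, then any $d$-element transversal of $\mathcal{G}_v$ has at least $\nu$ of its elements inside their union, so there are only $O_d(n^{d-\nu})$ of them; hence $|\mathcal{L}_v|=O_d(n^{d-\nu(\mathcal{G}_v)})$, and in particular $|\mathcal{L}_v|>C_dn^{d-2}$ for a suitable constant $C_d$ forces $\mathcal{G}_v$ to be intersecting. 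Then $|\mathcal{G}_v|\ge|\mathcal{L}_v|$ exceeds the Hilton--Milner bound once $n$ is large, so $\mathcal{G}_v$ lies in a star: some $w_v\in[n]\setminus\{v\}$ lies in every member of $\mathcal{F}$ containing $v$, whence $\mathcal{F}_v\subseteq\{F:\{v,w_v\}\subseteq F\}$. Calling such a $v$ \emph{heavy}, the non-heavy vertices contribute at most $n\cdot C_dn^{d-2}=O_d(n^{d-1})$ to $|\mathcal{F}|$, which is negligible next to $\binom{n-1}{d}$.

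The main obstacle is the endgame for the heavy part: one must reconcile the local centres $w_v$ and remove the $O_d(n^{d-1})$ slack so as to reach the sharp value $\binom{n-1}{d}$ rather than $\binom{n-1}{d}+O_d(n^{d-1})$. The expected mechanism is to use the implication ``$b_F\in F'\Rightarrow|F\cap F'|\ge 2$'' across members homed at heavy vertices with different centres, forcing a single common centre $w^{\ast}$, so that all but $O_d(n^{d-1})$ members of $\mathcal{F}$ contain $w^{\ast}$; the remaining exceptional members are then killed by the same singleton-intersection obstruction which, when $d=1$, already forces $\mathcal{F}$ to be a disjoint union of stars and hence $|\mathcal{F}|\le n-1$ outright. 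I would expect this rigidity step to require the most care. A convenient shortcut is to invoke \cref{thm:GeneralD} from the outset: it yields $|\mathcal{F}|\le\binom{n-1}{d}+O_d(n^{d-1-\frac{1}{4d-2}})$, so $\mathcal{F}$ is a priori near-extremal and only the stability and rigidity analysis remains.
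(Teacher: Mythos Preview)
Your treatments of $s=0$ and $s=d$ are correct. For $s=d$ your Bollob\'as set-pair argument is different from both of the paper's proofs (a double-counting argument comparing $\mathcal{F}$ with $\binom{[n]}{d+1}\setminus\mathcal{F}$, and an application of a partial-shadow theorem), and is arguably the cleanest of the three; the paper's double-counting proof has the advantage that it also identifies the extremal families as stars via a Kruskal--Katona/F\"uredi--Griggs characterisation, which Bollob\'as alone does not give.

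For $s=1$ your setup matches the paper's: your ``heavy'' vertices are the paper's ``good'' elements, and your $w_v$ is the paper's $f(v)$; your route to ``every $F\ni v$ contains $w_v$'' via the full link is in fact a little slicker than the paper's, which first applies Hilton--Milner to $\mathcal{L}_v$ and then upgrades to the full link using a sunflower of $d+1$ petals. However, the endgame you flag as ``requiring the most care'' is genuinely where the content lies, and your sketch does not supply it. Two concrete pieces are missing. First, one must iterate $v\mapsto w_v$; if the iteration hits a cycle of length $k\ge2$, then every $F$ touching the cycle contains all $k$ cycle vertices, and the paper bounds $|\mathcal{F}|$ by $\binom{n-k}{d}$ plus the restricted family on the complement, invoking \cref{thm:GeneralD} when $k=2$. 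Second, and more importantly, your accounting gives only $\binom{n-1}{d}+O_d(n^{d-1})$ because the non-heavy contribution is $O_d(n^{d-1})$; to kill this slack the paper does \emph{not} reduce to a single centre, but instead proves an exact Vandermonde-type bound for the heavy part: if $G$ is the set of heavy vertices and $B=\{b(v):v\in G\}$ the set of terminal (non-heavy) endpoints, then $|\mathcal{F}_{\mathrm{good}}|\le \sum_{i\ge1}\binom{|G|}{i}\binom{n-1-|G|}{d-i}=\binom{n-1}{d}-\binom{n-1-|G|}{d}$, with an extra saving of order $\binom{|G|}{d-1}$ when $|B|>1$ or when some $F$ avoids $G\cup B$. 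The non-heavy cost $O((n-|G|)n^{d-2})$ is then compared against these savings in a four-case analysis on $|G|$ and $|B|$; only in the final case ($|B|=1$ and no $F$ avoids $G\cup B$) does one actually conclude that $\mathcal{F}$ is a star. Your shortcut of invoking \cref{thm:GeneralD} globally does not bypass this, since that theorem also leaves an error of order $n^{d-1-1/(4d-2)}$ which still has to be beaten by the same mechanism.
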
  
Note that~\cref{conj:TrueGeneralization} recovers the Erd\H{o}s-Ko-Rado theorem as the special case $s=0$. In particular, if $\mathcal{F}$ is a star, then for each $F\in \mathcal{F}$, we can take $B_F$ to be an arbitrary subset of $F \setminus \{a\}$ with size $s$; then clearly $F \cap F' \neq B_F$ for all $F, F'\in \mathcal{F}$ and the upper bound $\binom{n-1}{d}$ on $\mathcal{F}$ can be achieved. However, it appears that no other obvious family satisfies the assumption of the conjecture and achieves the same size. Thus, in the spirit of the Hilton-Milner theorem~\cite{1967Hilton}, it is plausible to conjecture that all maximum families satisfying the assumption of~\cref{conj:TrueGeneralization} are stars provided that $n$ is sufficiently large compared to $d$. Indeed, we confirm this when $s=d$ in~\cref{thm:d=s} and $s=1$ in~\cref{thm:s=1}. In particular,~\cref{thm:Verification} also indicates that~\cref{conj:TrueGeneralization} holds when $d=2$ and $n$ is sufficiently large.

\section{Useful Tools}
The Erd\H{o}s-Rado sunflower lemma~\cite{1960Sunflower} is a classical result in combinatorics, which states that any sufficiently large set system must contain a sunflower. A \emph{sunflower} (or delta-system) is a collection of \( k \) distinct sets \( S_1, S_2, \dots, S_k \) such that their pairwise intersections are identical, called the \emph{core}. Their non-overlapping parts are the \emph{petals}. The lemma gives an upper bound on the size of a set family that avoids a sunflower with \( k \) sets: \(|\mathcal{F}| \leq (k-1)^r r!\), where \( r \) is the maximum set size. In a recent breakthrough, Alweiss, Lovett, Wu, and Zhang~\cite{2021Annals} improved the bound in the sunflower lemma significantly, and the current best-known bound was proven by Bell, Chueluecha, and Warnke~\cite{2021DMSunflower}.

In recent years, significant progress has been made on several important combinatorial problems under the additional assumption of a bounded VC-dimension, such as the Erd\H{o}s-Schur conjecture~\cite{2021ErdosSchur}, the Erd\H{o}s-Hajnal conjecture~\cite{2019DCGBVCEH,2023BVCErdosHajnal} and the sunflower conjecture~\cite{2024BVCSunflower,2023BVCSunflower}. In particular, assuming bounded VC-dimension, Balogh, Bernshteyn, Delcourt, Ferber, and Pham~\cite{2024BVCSunflower} quantitatively strengthened the sunflower lemma. The iterated logarithm of \( k \), denoted as \( \log^\ast k \), is the number of times the logarithm function must be applied iteratively to \( k \) until the result is less than or equal to 1. 

\begin{theorem}[\cite{2024BVCSunflower}]\label{thm:BVCSunflower}
  Let $n\ge k>d$ and $r\ge 1$ be positive integers. There exists a constant $C_{r, d} > 0$ such that  any $k$-uniform set system $\mathcal{F}\subseteq\binom{[n]}{k}$ with VC dimension at most $d$ and $|\mathcal{F}|\ge (C_{r,d}\cdot \log^\ast k)^{k}$ contains a sunflower with $r$ petals.
\end{theorem}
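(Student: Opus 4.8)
The plan is to run the classical reduction behind the sunflower lemma --- localize away popular elements, then handle the ``spread'' case --- and to invoke the VC-dimension hypothesis only in the spread case, where it should cut the required spreadness from the ``$\log k$'' of Alweiss--Lovett--Wu--Zhang down to ``$\log^\ast k$''. Call a $w$-uniform family $\mathcal{G}$ \emph{$\kappa$-spread} if $|\{G\in\mathcal{G}: T\subseteq G\}|\le \kappa^{-|T|}|\mathcal{G}|$ for every nonempty $T$, and write $\mathcal{G}(S)=\{G\setminus S: S\subseteq G\in\mathcal{G}\}$. Set $M=C_{r,d}\log^\ast k$; the goal is to show $|\mathcal{F}|\ge M^{k}$ forces an $r$-sunflower.

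The reduction is standard. By the usual localization lemma (choose $S$ to maximize $|\mathcal{F}(S)|\,M^{|S|}$), there is a possibly empty set $S$ such that $\mathcal{F}(S)$ is $M$-spread, $(k-|S|)$-uniform, and has at least $M^{\,k-|S|}$ members. Crucially, $\mathcal{F}(S)$ still has VC-dimension at most $d$ --- a set shattered by $\mathcal{F}(S)$ avoids $S$ and is therefore shattered by $\mathcal{F}$ --- and an $r$-sunflower in $\mathcal{F}(S)$ yields one in $\mathcal{F}$ by adjoining $S$ to every petal and to the core. Since $\log^\ast(k-|S|)\le\log^\ast k$, it suffices to prove a \emph{VC-enhanced spread lemma}: there is $\kappa_0=\kappa_0(r,d)$ such that every $\kappa_0(\log^\ast w)$-spread, $w$-uniform, nonempty family of VC-dimension at most $d$ contains $r$ pairwise disjoint members.

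For this I would adapt the inductive proof of the ALWZ spread lemma, whose conclusion is that for a $\kappa$-spread $w$-uniform family with $\kappa\ge C_r\log w$, every part of a random equipartition of the ground set into $r$ parts contains a full member with positive probability. That proof reveals a random portion of the sampled set, conditions on it, and recurses on a ``residual'' family of smaller uniformity, the $\log w$ factor being essentially the number of recursion levels. The role of the VC bound is to arrange that, after the right random restriction, the residual family has uniformity $O(\log w)$ rather than merely a little smaller, while --- by Sauer--Shelah and Haussler-type packing estimates for VC-$\le d$ classes --- still having VC-dimension at most $d$, enough members, and its spread intact up to an $O_d(1)$ additive loss. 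Iterating, the uniformity collapses $w\mapsto O(\log w)\mapsto\cdots$ to a constant $w_0=w_0(r)$ after $O(\log^\ast w)$ rounds; and a $\kappa$-spread family of uniformity $w_0$ with $\kappa\ge 2rw_0$ contains $r$ pairwise disjoint members by one pass of greedy deletion. The spread spent --- $O_d(1)$ per round over $O(\log^\ast w)$ rounds --- is exactly the $\kappa_0\log^\ast w$ in the statement.

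The main obstacle is this VC-enhanced restriction step: one must pin down a sampling scheme whose residual family simultaneously (i) keeps VC-dimension at most $d$ --- plain traces do, but a trace onto a tiny ground set collapses the family to $O_d(|W|^{d})$ sets, so the scheme must be tuned to retain both many members \emph{and} the spread; (ii) drops the uniformity by (roughly) a full logarithm each round, so that only $O(\log^\ast w)$ rounds occur; and (iii) loses only $O_d(1)$ in spread per round. A secondary chore is the joint bookkeeping --- carrying the uniformity, VC-dimension, spread parameter, and lower bound on $|\mathcal{F}|$ through the localization and all $\log^\ast w$ rounds --- to confirm that $(C_{r,d}\log^\ast k)^{k}$ suffices throughout. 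Since the $d=\infty$ case already needs the full ALWZ machinery, essentially all the new work lies in item (ii).
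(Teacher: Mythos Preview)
The paper does not prove this theorem; it is quoted verbatim from Balogh, Bernshteyn, Delcourt, Ferber, and Pham and used only as a black-box tool (to bound how many $F_i$ can share the same $B_i$). There is therefore no in-paper proof against which to compare your proposal.

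As for the proposal itself: the localization-to-spread reduction in your first two paragraphs is standard and correct, and it is also correct that links preserve a VC-dimension bound. But your third paragraph is not a proof --- it is a wish list. You assert that a VC bound should let one random-restriction round drop the uniformity from $w$ to $O(\log w)$ while losing only $O_d(1)$ in spread, and then iterate to reach $\log^\ast$. You do not specify the sampling scheme, you do not identify which Haussler-type estimate delivers the claimed uniformity collapse, and you explicitly flag the restriction step as ``the main obstacle'' containing ``essentially all the new work''. In other words, the entire content of the theorem --- the mechanism by which bounded VC-dimension upgrades $\log k$ to $\log^\ast k$ --- is precisely the step you have left open. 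Until that step is supplied, this is a plausible outline but not a proof.
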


For a set system \(\mathcal{F} \subseteq \binom{[n]}{d+1}\), and for an integer \(1 \leq s \leq d\), we denote the \emph{$s$-shadow} of \(\mathcal{F}\) as 
\[
\partial_s \mathcal{F} := \{T \in \binom{[n]}{s} : T \subseteq F \text{ for some } F \in \mathcal{F}\}.
\]
One of the earliest results in extremal set theory, the celebrated Kruskal-Katona theorem~\cite{1966Katona, 1963Kruskal}, provides a concise and elegant description of the relationship between the cardinalities of \(\mathcal{F} \subseteq \binom{[n]}{d+1}\) and its \(d\)-shadow. Here, we use the Lov\'{a}sz version from~\cite[Exercise 13.31(b)]{1979KKLovasz}, which reformulates the theorem to give a continuous relaxation of the combinatorial result.

\begin{theorem}[Kruskal-Katona theorem]\label{thm:KK}
For any \(\alpha \geq d+1\), if \(\mathcal{G}\) is a \((d+1)\)-uniform set family with \(\abs{\mathcal{G}} = \binom{\alpha}{d+1}\), then 
\[
\abs{\partial_d \mathcal{G}} \geq \binom{\alpha}{d} = \abs{\mathcal{G}} \cdot \frac{d+1}{\alpha - d}.
\]
\end{theorem}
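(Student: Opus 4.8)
The statement is the Lov\'asz continuous reformulation of the Kruskal--Katona theorem. One route is to quote the classical integer Kruskal--Katona theorem and then run a short computation with the $k$-cascade (binomial) representation of $\abs{\mathcal G}$, where $k=d+1$; I will instead outline a self-contained proof by the compression method, which is the route I would actually take. Throughout, interpret $\binom{x}{k},\binom{x}{d},\binom{x}{d-1}$ for real $x$ as the corresponding polynomials, noting that on $[k-1,\infty)$ the map $x\mapsto\binom{x}{k}$ is an increasing bijection onto $[0,\infty)$, so the real parameters introduced below are well defined. The plan is to induct on the pair $(d,n)$ in lexicographic order. The base cases $d=0$ (then $\partial_0\mathcal G=\{\emptyset\}$, of size $1=\binom{\alpha}{0}$) and $n=d+1$ (then $\mathcal G=\{[n]\}$, so $\abs{\partial_d\mathcal G}=d+1=\binom{d+1}{d}$) are immediate. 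For the inductive step, I would first replace $\mathcal G$ by a \emph{compressed} family: recalling the $(i,j)$-compression with $i<j$, which for each member containing $j$ but not $i$ lowers $j$ to $i$ unless the resulting set is already present, a standard lemma shows this operation preserves $\abs{\mathcal G}$ and $(d+1)$-uniformity and satisfies $\partial_d(C_{ij}(\mathcal G))\subseteq C_{ij}(\partial_d\mathcal G)$, hence does not increase $\abs{\partial_d\mathcal G}$. Iterating, it suffices to prove the bound for a family stable under all $(i,j)$-compressions.

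\textbf{Link--deletion decomposition.} For $v\in[n]$, write $\mathcal G_v=\{G\setminus\{v\}:v\in G\in\mathcal G\}$, a $d$-uniform family on $[n-1]$, and $\mathcal G_{\bar v}=\{G\in\mathcal G:v\notin G\}$, a $(d+1)$-uniform family on $[n-1]$. Splitting the $d$-sets of $\partial_d\mathcal G$ by whether they contain $v$ yields the disjoint decomposition
\[
\partial_d\mathcal G=\bigl(\mathcal G_v\cup\partial_d\mathcal G_{\bar v}\bigr)\sqcup\{S\cup\{v\}:S\in\partial_{d-1}\mathcal G_v\}.
\]
Now take $v=n$. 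If $n>d+1$ then for every $G\in\mathcal G$ with $n\in G$ there is some $i\in[n-1]\setminus G$, and stability under the $(i,n)$-compression forces $(G\setminus\{n\})\cup\{i\}\in\mathcal G$, a member avoiding $n$; hence $\mathcal G_n\subseteq\partial_d\mathcal G_{\bar n}$, the union above collapses, and $\abs{\partial_d\mathcal G}=\abs{\partial_d\mathcal G_{\bar n}}+\abs{\partial_{d-1}\mathcal G_n}$. (If $\mathcal G_n=\emptyset$, then $\mathcal G$ lives on $[n-1]$ and we finish by the induction in $n$; if $n=d+1$ we are in a base case. One also checks that a nonempty compressed $\mathcal G$ with $n>d+1$ has $\mathcal G_{\bar n}\neq\emptyset$.)

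\textbf{Closing the induction.} Assuming $\mathcal G_n\neq\emptyset$, write $\abs{\mathcal G_n}=\binom{\beta}{d}$ and $\abs{\mathcal G_{\bar n}}=\binom{\gamma}{d+1}$ with $\beta\ge d$, $\gamma\ge d+1$, so that $\binom{\gamma}{d+1}+\binom{\beta}{d}=\binom{\alpha}{d+1}$. The inductive hypothesis for $d-1$ applied to the $d$-uniform family $\mathcal G_n$ gives $\abs{\partial_{d-1}\mathcal G_n}\ge\binom{\beta}{d-1}$; the inductive hypothesis in $n$ applied to $\mathcal G_{\bar n}$ gives $\abs{\partial_d\mathcal G_{\bar n}}\ge\binom{\gamma}{d}$; and $\mathcal G_n\subseteq\partial_d\mathcal G_{\bar n}$ gives the extra bound $\abs{\partial_d\mathcal G_{\bar n}}\ge\binom{\beta}{d}$. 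If $\beta\ge\gamma$, then $\abs{\partial_d\mathcal G}\ge\binom{\beta}{d}+\binom{\beta}{d-1}=\binom{\beta+1}{d}\ge\binom{\alpha}{d}$, since $\binom{\beta+1}{d+1}=\binom{\beta}{d+1}+\binom{\beta}{d}\ge\binom{\gamma}{d+1}+\binom{\beta}{d}=\binom{\alpha}{d+1}$ forces $\beta+1\ge\alpha$. If instead $\beta\le\gamma$, the whole problem reduces to the purely real-variable inequality
\[
\binom{\gamma}{d}+\binom{\beta}{d-1}\ \ge\ \binom{\alpha}{d}\qquad\text{whenever}\qquad d\le\beta\le\gamma,\quad \binom{\gamma}{d+1}+\binom{\beta}{d}=\binom{\alpha}{d+1},
\]
which is tight exactly at $\beta=\gamma$ (then $\alpha=\gamma+1$ and both sides equal $\binom{\gamma+1}{d}$), the case of a colexicographic initial segment.

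\textbf{Main obstacle.} The crux is this last inequality: one must show that, under the constraint, $\binom{\gamma}{d}+\binom{\beta}{d-1}$ never drops below $\binom{\alpha}{d}$. I would prove it by regarding $\alpha$ as a function of $(\beta,\gamma)$ via the constraint and differentiating, exploiting the log-convexity of $x\mapsto\binom{x}{m}$ on $[m-1,\infty)$ to locate the minimum at $\beta=\gamma$; the verification that compressions do not increase the $d$-shadow is the only other nonroutine ingredient. Should the analytic inequality become unpleasant, the clean fallback is the classical two-step route: prove the integer Kruskal--Katona theorem by compressing to colexicographic initial segments, then deduce the continuous form by the analogous convexity estimate applied to the $k$-cascade representation of $\binom{\alpha}{d+1}$.
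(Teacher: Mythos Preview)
The paper does not prove \cref{thm:KK}; it is quoted as a known black box with a citation to Lov\'asz's book, so there is no paper proof to compare against. Your outline is the standard compression proof of the Lov\'asz form and is correct in its structure: the reduction to compressed families, the link--deletion split at $v=n$, the containment $\mathcal G_n\subseteq\partial_d\mathcal G_{\bar n}$ for compressed families, and the two-case induction are all sound. The real-variable inequality you isolate at the end is exactly the crux of the Lov\'asz version; your proposed approach via differentiation under the constraint (or log-convexity of $x\mapsto\binom{x}{m}$) is one of the clean ways to finish it, and your fallback through the integer theorem plus cascade analysis is the other standard option. Nothing is missing beyond carrying out that final analytic step.
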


F\"{u}redi and Griggs~\cite{FG86} provided the following characterization of extremal families of shadows, which plays a key role in identifying certain extremal families.

\begin{theorem}\label{thm:FG86}
Let \(n >d \ge g\) be positive integers. Let \(\mathcal{F} \subseteq \binom{[n]}{d+1}\) be a set system such that its \(g\)-shadow is minimal. Then its \((g-1)\)-shadow is also minimal.
\end{theorem}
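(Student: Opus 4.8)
\textbf{Proof strategy for Theorem~\ref{thm:FG86}.} The statement is a "downward propagation of shadow-minimality": if the $g$-shadow $\partial_g \mathcal F$ is as small as possible among all $(d+1)$-uniform families on $[n]$ of size $|\mathcal F|$, then so is $\partial_{g-1}\mathcal F$. The plan is to argue by contradiction and by a compression/shifting argument. Suppose $\partial_g\mathcal F$ is minimal but $\partial_{g-1}\mathcal F$ is not; let $\mathcal F'$ be a family witnessing minimality of the $(g-1)$-shadow, so $|\partial_{g-1}\mathcal F'| < |\partial_{g-1}\mathcal F|$. The first step is to reduce to the canonical extremal configuration: by the Kruskal--Katona theorem (together with its iterated form), the minimum value of $|\partial_{g-1}\mathcal G|$ over $(d+1)$-uniform $\mathcal G$ with $|\mathcal G| = m$ is attained by the family $\mathcal L_m$ consisting of the first $m$ sets of $\binom{[n]}{d+1}$ in colexicographic order, and likewise for the $g$-shadow; moreover the colex-initial segment simultaneously minimizes \emph{all} the shadows $\partial_j$. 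So it suffices to show: if $\mathcal F$ has $|\partial_g\mathcal F| = |\partial_g \mathcal L_m|$ then $|\partial_{g-1}\mathcal F| \ge |\partial_{g-1}\mathcal L_m|$, where $m = |\mathcal F|$.

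\textbf{Key steps.} First I would record the compression lemma: the standard $(i,j)$-shift operator $S_{ij}$ (replace $j$ by $i$ in a set when possible) does not increase $|\partial_h \mathcal G|$ for any $h$, and iterating all shifts to a fixed point produces a \emph{compressed} (shifted) family $\mathcal G^*$ with $|\partial_h \mathcal G^*| \le |\partial_h \mathcal G|$ for every $h$ simultaneously, while $|\mathcal G^*| = |\mathcal G|$. Thus we may assume $\mathcal F$ is shifted. Second, for a shifted family there is the well-known recursive decomposition: writing $\mathcal F_0 = \{F \in \mathcal F : n \notin F\}$ and $\mathcal F_1 = \{F \setminus\{n\} : F \in \mathcal F,\ n \in F\}$, one has $\mathcal F_1 \subseteq \partial_d \mathcal F_0$ (by shiftedness), $\partial_g \mathcal F = \partial_g \mathcal F_0 \,\dot\cup\, \{T \cup \{n\} : T \in \partial_{g-1}\mathcal F_1\}$ and analogously $\partial_{g-1}\mathcal F = \partial_{g-1}\mathcal F_0 \,\dot\cup\, \{T\cup\{n\}: T \in \partial_{g-2}\mathcal F_1\}$. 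Third, I would set up an induction on $n$ (and on $d$): apply the inductive hypothesis of the theorem to $\mathcal F_0$ (which is $(d+1)$-uniform on $[n-1]$) to convert minimality of its $g$-shadow into minimality of its $(g-1)$-shadow, and to $\mathcal F_1$ (which is $d$-uniform) to pass from $\partial_{g-1}$-minimality to $\partial_{g-2}$-minimality. The content of F\"uredi--Griggs is that minimality is preserved under these passages \emph{with matching sizes}; so the crux is to show that when $\partial_g\mathcal F$ is globally minimal, the pair $(|\mathcal F_0|, |\mathcal F_1|)$ is forced to the split that the colex segment uses — otherwise one could perturb the split to decrease $|\partial_g \mathcal F|$, contradicting minimality — and then the decomposition formula for $\partial_{g-1}$ plus the inductive conclusions give $|\partial_{g-1}\mathcal F| \le |\partial_{g-1}\mathcal L_m|$, hence equality.

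\textbf{Main obstacle.} The delicate point is the "forced split" argument: showing that a $g$-shadow-minimal shifted family must partition its members by membership of $n$ in exactly the colex-optimal proportion, and that this same partition is simultaneously $(g-1)$-shadow-optimal. Controlling this requires a careful convexity/monotonicity analysis of the Kruskal--Katona function $m \mapsto |\partial_h \mathcal L_m|$ and of how $|\partial_g \mathcal F_0|$ and $|\partial_{g-1}\mathcal F_1|$ trade off as $|\mathcal F_1|$ increases with $|\mathcal F_0|+|\mathcal F_1| = m$ fixed; one needs that the marginal cost in the $g$-shadow and in the $(g-1)$-shadow are governed by the same threshold, so that optimality in one forces optimality in the other. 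I expect this comparison of "shadow derivatives" — most cleanly phrased through the cascade (binomial) representation of $m$ and the Lov\'asz continuous relaxation in Theorem~\ref{thm:KK} — to be where the real work lies; the shifting, the recursive shadow identities, and the induction set-up are routine by comparison. An alternative, possibly cleaner, route avoiding case analysis would be to exhibit an explicit injection from $\partial_{g-1}\mathcal L_m$ into $\partial_{g-1}\mathcal F$ built from the (assumed) bijection-like tightness of $\partial_g$, but I suspect such an injection still ultimately rests on the same colex-structure of a shadow-minimal family, so the convexity estimate seems unavoidable.
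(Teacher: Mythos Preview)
The paper does not prove Theorem~\ref{thm:FG86}; it is quoted from F\"uredi--Griggs \cite{FG86} as a black-box tool (used once, in the equality case of Claim~\ref{claim:IndexD}). So there is no ``paper's proof'' to compare against, and I will just comment on your strategy.

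There is a genuine gap, and it sits precisely in the step you treat as routine. You reduce the theorem to showing $|\partial_{g-1}\mathcal F|\ge |\partial_{g-1}\mathcal L_m|$ --- but that inequality is automatic from iterated Kruskal--Katona (you yourself observe that $\mathcal L_m$ minimises all shadows simultaneously). The content of the theorem is the \emph{reverse} inequality $|\partial_{g-1}\mathcal F|\le |\partial_{g-1}\mathcal L_m|$, and for that direction your shifting reduction fails: the shift operator satisfies $|\partial_h\mathcal F^\ast|\le|\partial_h\mathcal F|$ for every $h$, so even if you prove the theorem for the shifted family $\mathcal F^\ast$ and obtain $|\partial_{g-1}\mathcal F^\ast|=|\partial_{g-1}\mathcal L_m|$, the chain $|\partial_{g-1}\mathcal F|\ge|\partial_{g-1}\mathcal F^\ast|=|\partial_{g-1}\mathcal L_m|$ gives nothing beyond what you already knew. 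You cannot ``assume $\mathcal F$ is shifted'' when the inequality you need goes the wrong way for the reduction. (You do write $\le$ later in the sketch, so perhaps the earlier $\ge$ is a slip --- but then the shifting step is the actual error, not just a typo.)

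The F\"uredi--Griggs route is structural rather than by reduction to shifted families: they (and M\"ors independently) characterise \emph{all} $(d+1)$-uniform families achieving equality in the Kruskal--Katona bound for the one-step shadow, in terms of the cascade representation of $|\mathcal F|$; the downward propagation then falls out because every family in the characterised class has all of its lower shadows minimal as well. Your ``forced split'' intuition is pointing at the right phenomenon --- equality at level $g$ really does pin down the structure --- but the mechanism that extracts structure from equality is a uniqueness analysis inside the KK cascade, not a shift-then-induct argument. If you want to salvage your inductive decomposition, you would first need a lemma saying that a single $(i,j)$-shift that leaves $|\partial_g\mathcal F|$ unchanged also leaves $|\partial_{g-1}\mathcal F|$ unchanged; that is not obvious, and proving it is essentially as hard as the theorem itself.
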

   We will also take advantage of a recent result in~\cite{2023partialShadow} on the partial shadows.
    \begin{theorem}[\cite{2023partialShadow}]\label{thm:PartialShadow}
       Let $n\ge d+1$ and $k$ be positive integers. Let $\mathcal{F}\subseteq\binom{n}{d+1}$ and $\mathcal{G}\subseteq\binom{[n]}{d}$ be families such that for any $F\in\mathcal{F}$, there exist $G_{1},\ldots,G_{k}\in\mathcal{G}$ such that $G_{i}\subseteq F$ for any $i\in [k]$. Then if $|\mathcal{F}|=\binom{x}{k}$ for some real number $x\ge k$, we have $|\mathcal{G}|\ge\binom{x}{k-1}$.  
    \end{theorem}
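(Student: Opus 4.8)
The plan is to reduce to Kruskal--Katona group by group. First dispose of the trivial cases: if $k\ge d+2$ then no $F$ can have $k$ distinct $d$-subsets, so $\mathcal F=\emptyset$, which is incompatible with $|\mathcal F|=\binom xk\ge1$ for $x\ge k$; if $k\le1$ the claim is immediate; and replacing $\mathcal G$ by $\mathcal G\cap\partial_d\mathcal F$ only shrinks $\mathcal G$ while preserving the hypothesis. So assume $2\le k\le d+1$ and $\mathcal G\subseteq\partial_d\mathcal F$. When $k=d+1$ the hypothesis forces all $d+1$ of the $d$-subsets of every $F\in\mathcal F$ to lie in $\mathcal G$, i.e.\ $\partial_d\mathcal F\subseteq\mathcal G$, so the conclusion is exactly Theorem~\ref{thm:KK}. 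From now on $2\le k\le d$.

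For each $F\in\mathcal F$ let $M_F=\{v\in F:F\setminus\{v\}\notin\mathcal G\}$, so $|M_F|\le d+1-k$, and fix a $(d{+}1{-}k)$-set $R_F$ with $M_F\subseteq R_F\subseteq F$. Group $\mathcal F$ by the value of the \emph{core} $R:=R_F$, and put $\mathcal S_R=\{F\setminus R: R_F=R\}\subseteq\binom{[n]\setminus R}{k}$ and $\mathcal T_R=\{T\in\binom{[n]\setminus R}{k-1}:R\cup T\in\mathcal G\}$. If $F$ is in the $R$-group and $v\in F\setminus R$, then $v\notin M_F$ and $R\subseteq F\setminus\{v\}$, so $R\cup\bigl((F\setminus R)\setminus\{v\}\bigr)=F\setminus\{v\}\in\mathcal G$; hence $\partial_{k-1}\mathcal S_R\subseteq\mathcal T_R$, and Theorem~\ref{thm:KK} (in uniformity $k$) gives $|\mathcal T_R|\ge\binom{x_R}{k-1}$, where $\binom{x_R}{k}=|\mathcal S_R|$ and $x_R\ge k$. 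Since $F\mapsto F\setminus R$ is a bijection from the $R$-group onto $\mathcal S_R$ and the groups partition $\mathcal F$, we have $\sum_R\binom{x_R}{k}=|\mathcal F|=\binom xk$. The function carrying $\binom\alpha k$ to $\binom\alpha{k-1}$ (for $\alpha\ge k-1$) is increasing and concave---precisely the convexity behind the Lov\'asz form of Kruskal--Katona---and nonnegative at $0$, so it is superadditive and $\sum_R\binom{x_R}{k-1}\ge\binom{x}{k-1}$.

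It remains to compare $\sum_R|\mathcal T_R|$ with $|\mathcal G|$. Via $T\mapsto R\cup T$, $\mathcal T_R$ is in bijection with $\{G\in\mathcal G:R\subseteq G\}$, and two such sets overlap exactly in $\{G\in\mathcal G:R\cup R'\subseteq G\}$. Thus if the cores $R_F$ can be chosen so that no $G\in\mathcal G$ contains two distinct cores---for instance when a single core suffices, or more generally when the cores can be made pairwise ``far apart''---the sets $\{G\in\mathcal G:R\subseteq G\}$ are pairwise disjoint and $|\mathcal G|\ge\sum_R|\mathcal T_R|\ge\sum_R\binom{x_R}{k-1}\ge\binom x{k-1}$, as wanted. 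In general the cores cannot be separated this way, and the hard part is to control this overlap. The intended route is to first apply the compression/shift operators $S_{ij}$ to the pair $(\mathcal F,\mathcal G)$---checking that a carefully chosen joint compression preserves the covering hypothesis and does not increase $|\mathcal G|$ (the delicate point being that the naive simultaneous shift can drop some set's coverage below $k$, so one must decline to fire it on the offending $d$-subsets)---so as to normalize the defect $\{M_F\}$ into a canonical form in which the core-shadows are controlled (e.g.\ one core dominates, making a single use of Theorem~\ref{thm:KK} enough). I expect that establishing this normalization---that one may assume the defect is concentrated without enlarging $\mathcal G$---is the crux of the argument; the superadditivity estimate and the Kruskal--Katona input above are the easy parts.
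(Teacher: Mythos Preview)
The paper does not prove this statement; it is quoted without proof from \cite{2023partialShadow} (see the sentence introducing it: ``We will also take advantage of a recent result in~\cite{2023partialShadow} on the partial shadows''), and is then used only in the alternative proof of Theorem~\ref{thm:d=s}. So there is no ``paper's own proof'' to compare your attempt to.

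As for the attempt itself: the reductions to $2\le k\le d$, the grouping by a core $R_F\supseteq M_F$, the application of Theorem~\ref{thm:KK} inside each group, and the subadditivity step (concave with value $0$ at $0$ implies subadditive; you write ``superadditive'' but the inequality you use, $\sum_R\binom{x_R}{k-1}\ge\binom{x}{k-1}$, is the correct one) are all sound. The gap is exactly the one you name: a single $G\in\mathcal G$ can contain several distinct cores $R$, so $\sum_R|\mathcal T_R|$ may overcount $|\mathcal G|$, and then the chain $|\mathcal G|\ge\sum_R|\mathcal T_R|\ge\sum_R\binom{x_R}{k-1}\ge\binom{x}{k-1}$ breaks at the first inequality. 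You propose to repair this via a joint compression that ``concentrates the defect'' into a single dominating core, but you also note that the naive simultaneous shift of $(\mathcal F,\mathcal G)$ can drop some $F$'s coverage below $k$, and you give no mechanism to prevent this. That repair is the entire content of the theorem beyond Kruskal--Katona, and you have not supplied it; what you have is a plausible outline, not a proof.
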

The only extremal family in the Erd\H{o}s-Ko-Rado theorem~\cite{1961EKR} is the \emph{star} family, also referred to as a \emph{trivially intersecting} family. Hilton and Milner~\cite{1967Hilton} proved a stability result, demonstrating that the size of non-trivially intersecting families is considerably smaller than the maximum size given by the Erd\H{o}s-Ko-Rado theorem.
\begin{theorem}[Hilton-Milner~\cite{1967Hilton}]\label{thm:HiltonMilner}
Let $n,k$ be positive integers with $n>2k$. If $\mathcal{F}\subseteq \binom{[n]}{k}$ is an intersecting family and $\bigcap_{F\in\mathcal{F}}F=\emptyset$, then we have
\begin{equation*}
  |\mathcal{F}| \le \binom{n-1}{k-1} - \binom{n-k-1}{k-1}+1.   
\end{equation*}
\end{theorem}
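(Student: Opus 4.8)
The plan is to run the standard ``maximal family'' reduction. First I would note that it suffices to prove the bound when $\mathcal{F}$ is inclusion-maximal among intersecting families: any non-trivially intersecting family extends to a maximal intersecting one, and since the common intersection only shrinks under such an extension, the larger family is still non-trivially intersecting while being at least as large. So assume $\mathcal{F}\subseteq\binom{[n]}{k}$ is maximal intersecting with $\bigcap_{F\in\mathcal{F}}F=\emptyset$. Let $x$ be an element of maximum degree in $\mathcal{F}$, and split $\mathcal{F}=\mathcal{A}\sqcup\mathcal{B}$ with $\mathcal{A}=\{F\in\mathcal{F}:x\in F\}$ and $\mathcal{B}=\{F\in\mathcal{F}:x\notin F\}$; non-triviality guarantees $\mathcal{B}\neq\emptyset$.

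The crucial point is that maximality determines $\mathcal{A}$ exactly. Put $Y=[n]\setminus\{x\}$ and $\bar{\mathcal{A}}=\{A\setminus\{x\}:A\in\mathcal{A}\}\subseteq\binom{Y}{k-1}$. Every $A\in\mathcal{A}$ meets every $B\in\mathcal{B}$, and since $x\notin B$ this forces $A\setminus\{x\}$ to meet $B$; conversely, if $S\in\binom{Y}{k-1}$ meets every member of $\mathcal{B}$, then $S\cup\{x\}$ is a $k$-set intersecting every member of $\mathcal{F}$, so by maximality $S\cup\{x\}\in\mathcal{F}$, hence $S\in\bar{\mathcal{A}}$. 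Thus $\bar{\mathcal{A}}$ is \emph{precisely} the family of $(k-1)$-subsets of $Y$ that are transversals of $\mathcal{B}$, which yields the identity
\[
|\mathcal{F}|=|\mathcal{B}|+\binom{n-1}{k-1}-N,\qquad N:=\bigl|\{S\in\tbinom{Y}{k-1}: S\cap B=\emptyset\ \text{for some}\ B\in\mathcal{B}\}\bigr|.
\]
If $|\mathcal{B}|=1$, say $\mathcal{B}=\{B\}$, then $N=\binom{n-1-k}{k-1}$ exactly, so $|\mathcal{F}|=\binom{n-1}{k-1}-\binom{n-1-k}{k-1}+1$, matching the bound (and recovering the Hilton--Milner extremal family, a star with one extra petal).

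It remains to prove $N\ge\binom{n-1-k}{k-1}+|\mathcal{B}|-1$ when $|\mathcal{B}|\ge2$, and this is where the real difficulty lies. Complementing within $Y$, $N$ is the number of $(n-k)$-subsets of $Y$ containing some $B\in\mathcal{B}$, i.e.\ the size of the up-closure of $\mathcal{B}$ at level $n-k$. If $\mathcal{B}$ is ``spread out'' — say it contains two sets $B_1,B_2$ with $|B_1\cup B_2|$ not much below $2k-1$ — then inclusion--exclusion on ``disjoint from $B_1$ or from $B_2$'' already gives $N\ge2\binom{n-1-k}{k-1}-\binom{n-1-|B_1\cup B_2|}{k-1}$, which beats the target for large $n$ as long as $|\mathcal{B}|$ is not too large. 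The stubborn case is $|\mathcal{B}|$ large: then $\mathcal{B}$ is a large intersecting family on $Y$, so by Erd\H{o}s--Ko--Rado-type concentration it clusters around a single element $y$, and one applies the Kruskal--Katona theorem (\cref{thm:KK}) to the family $\{B\setminus\{y\}:B\in\mathcal{B}\}$, whose transversals govern $N$, together with the fact that non-triviality of $\mathcal{F}$ forces some $A\in\mathcal{A}$ with $y\notin A$ (so $A\setminus\{x\}$ is a size-$(k-1)$ transversal of that family, bounding its size). Balancing the union-bound regime against the Kruskal--Katona/concentration regime is the heart of the argument; everything else is bookkeeping with binomial coefficients. (Alternatively, one can run the whole proof via the shifting/compression technique, but then one must separately dispose of the scenario in which a compression step destroys non-triviality.)
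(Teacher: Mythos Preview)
The paper does not prove \cref{thm:HiltonMilner}; it is quoted as a classical result of Hilton and Milner and used as a black box (in the proof of \cref{lem:genHM}). So there is no ``paper's own proof'' to compare against.

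As for your sketch: the reduction to a maximal family and the identity
\[
|\mathcal{F}|=|\mathcal{B}|+\binom{n-1}{k-1}-N
\]
are correct and clean, and the $|\mathcal{B}|=1$ case is handled. But the entire content of the theorem is now packed into the inequality $N\ge\binom{n-1-k}{k-1}+|\mathcal{B}|-1$ for $|\mathcal{B}|\ge 2$, and this you do not prove. Your two-regime sketch is too loose to be called a proof plan: in the ``spread out'' regime you invoke inclusion--exclusion on two sets and say it ``beats the target for large $n$ as long as $|\mathcal{B}|$ is not too large'', but the theorem is stated for all $n>2k$, not just large $n$, and you give no threshold for $|\mathcal{B}|$; in the ``$|\mathcal{B}|$ large'' regime you appeal to Erd\H{o}s--Ko--Rado concentration and Kruskal--Katona without specifying which bound you apply or how the max-degree choice of $x$ enters quantitatively. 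Neither regime is delineated, and you have not argued that together they cover everything. In standard treatments this step is exactly where the work is---either via a cross-intersecting inequality for the pair $(\bar{\mathcal{A}},\mathcal{B})$ (Frankl's method) or via shifting, where one must genuinely control the step at which non-triviality would be destroyed. As written, your proposal identifies the right decomposition but leaves the actual theorem unproved.
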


\section{Proof of~\cref{thm:GeneralD}}
\label{sec:main-proof}
Throughout the section, let $d\geq 2$ be a fixed integer and $n$ be a sufficiently large integer compared to $d$. All the constants in the proof below are positive constants depending only on $d$, and we will not always state their explicit dependence on $d$. 

Let $\mathcal{F}=\{F_{1},\ldots,F_{m}\}\subseteq \binom{[n]}{d+1}$ be a set system with VC-dimension at most $d$. We can assume that $m\ge\binom{n-1}{d}$, otherwise we are already done. By the definition of VC-dimension, for each set $F_{i}\in\mathcal{F}$, $i\in [m]$, there must exist at least one subset $B_{i}\subsetneq F_{i}$ such that $F\cap F_{i}\neq B_{i}$ for any $F\in\mathcal{F}$. From this point onward, we always assume that when selecting $B_{i}$ for each $i\in [m]$, we choose a set with the largest cardinality among all available options.

\subsection{Overview of the proof}
Let us first give an overview of our proof strategy. The proof consists of three steps. In the first step, we extend Mubayi and Zhao's ingenious sunflower method to prove a preliminary but important estimate, \cref{claim:Enumerate1}. In the second step, we use a double-counting argument on the link of $\cF$ to prove the key \cref{lemma:NearTransversal}. This lemma constructs a small set $J \subset [n]$ that intersects almost every set in $\cF$. We note that such a $J$ does not exist for the tight example $\binom{[n]}{ \leq d}$ of the non-uniform \cref{lem:SSLemma}, so its existence in the uniform case sheds light on why the Frankl--Pach problem has a different answer. Finally, we split $\cF$ into $6$ subfamilies $\cT_J^1, \ldots, \cT_J^6$ based the set $J$ constructed in the second step. With sophisticated structural analysis, we construct various near-injections (maps that are injective on most of the domain) to control the size of each part (\cref{claim:T1} through \cref{claim:T6}) and complete the proof.

We first prove a simple observation by the definition of $B_{i}$.
\begin{claim}\label{claim:Injective}
Let $\mathcal{G}$ be a subfamily of $\mathcal{F}$, and let $\phi: \mathcal{G} \to 2^{[n]}$ be a map that satisfies $\phi(G) \subseteq G$ for any $G \in \mathcal{G}$. If $\mathcal{G}=\mathcal{G}_{1}\sqcup \mathcal{G}_{2}$, where each $F_{k}\in\mathcal{G}_{1}$ satisfies $\phi(F_{k})=B_{k}$ and $|B_{k}|=d$. Then $\phi$ is injective if $\phi|_{\mathcal{G}_{2}}$ is injective.
\end{claim}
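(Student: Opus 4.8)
The plan is a short case analysis by contradiction. Suppose $\phi$ is not injective, so there exist distinct $G,G'\in\mathcal{G}$ with $\phi(G)=\phi(G')$, while by assumption $\phi|_{\mathcal{G}_2}$ is injective. I would split according to which of $\mathcal{G}_1,\mathcal{G}_2$ contain $G$ and $G'$.

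The key sub-observation, which handles every case except ``both $G,G'\in\mathcal{G}_2$'', is the following. Suppose $F_k\in\mathcal{G}_1$ and $H\in\mathcal{G}$ are distinct with $\phi(F_k)=\phi(H)$. Then $B_k=\phi(F_k)=\phi(H)\subseteq H$, and also $B_k=\phi(F_k)\subseteq F_k$; hence $B_k\subseteq F_k\cap H$. Since $F_k\neq H$ are both $(d+1)$-element sets, $\abs{F_k\cap H}\le d=\abs{B_k}$, which forces $F_k\cap H=B_k$. But $H\in\mathcal{G}\subseteq\mathcal{F}$, so the defining property of $B_k$ (namely $F\cap F_k\neq B_k$ for all $F\in\mathcal{F}$) applied to $F=H$ yields $H\cap F_k\neq B_k$, a contradiction.

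It then remains to deploy this. If $G,G'\in\mathcal{G}_2$, then $\phi|_{\mathcal{G}_2}$ is not injective, contradiction. If $G,G'\in\mathcal{G}_1$, apply the sub-observation with $F_k=G$ and $H=G'$. If exactly one of them lies in $\mathcal{G}_1$, say $G\in\mathcal{G}_1$ and $G'\in\mathcal{G}_2$, apply the sub-observation again with $F_k=G$ and $H=G'$, noting that $G\neq G'$ holds automatically because $\mathcal{G}_1\cap\mathcal{G}_2=\emptyset$. In every case we reach a contradiction, so $\phi$ is injective.

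I do not anticipate a genuine obstacle here. The only points needing care are that $H$ must belong to $\mathcal{F}$ (true, since $\mathcal{G}\subseteq\mathcal{F}$) and that $H\neq F_k$, so that $\abs{F_k\cap H}\le d$ (true by distinctness of the colliding pair, and by disjointness of $\mathcal{G}_1,\mathcal{G}_2$ in the mixed case). This is precisely why one insists on choosing each $B_i$ of maximal size: once that size is $d$, a collision of the values $B_i$ would force two distinct $(d+1)$-sets to share a common $d$-set, which the definition of $B_i$ forbids.
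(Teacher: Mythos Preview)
Your proof is correct and follows essentially the same approach as the paper: assume a collision $\phi(F_{k_1})=\phi(F_{k_2})$, show that if either set lies in $\mathcal{G}_1$ then $B_{k_1}\subseteq F_{k_1}\cap F_{k_2}$ forces $F_{k_1}\cap F_{k_2}=B_{k_1}$ (contradiction), and conclude both lie in $\mathcal{G}_2$, contradicting injectivity of $\phi|_{\mathcal{G}_2}$. Your version is slightly more explicit about why $|F_k\cap H|\le d$, but the argument is the same.
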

\begin{poc}
    Suppose $\phi|_{\mathcal{G}_{2}}$ is injective, but $\phi$ is not. Then there exist $F_{k_{1}},F_{k_{2}}\in\mathcal{G}$ such that $\phi(F_{k_{1}})=\phi(F_{k_{2}})$. Suppose that $F_{k_{1}}\in\mathcal{G}_{1}$, then we have $B_{k_{1}}=\phi(F_{k_{1}})=\phi(F_{k_{2}})\subseteq F_{k_{2}}$, which yields that $F_{k_{1}}\cap F_{k_{2}}=B_{k_{1}}$, a contradiction. Therefore $F_{k_{1}}\in\mathcal{G}_{2}$, and by symmetry we also have $F_{k_{2}}\in\mathcal{G}_{2}$. However, this is impossible since $\phi|_{\mathcal{G}_{2}}$ is injective. This finishes the proof.
\end{poc}

\subsection{Estimate via sunflower lemma}
The selection criteria for $B_{i}$ allows us to apply the sunflower lemma and derive an upper bound on the number of $F_i$'s sharing the same $B_{i}$. The proof is inspired by and generalizes \cite[Lemma 3]{2007JAC} in Mubayi-Zhao's work.

\begin{claim}\label{claim:Enumerate1}
    For each subset $A\subseteq [n]$, the number of indices $i\in [m]$ with $B_{i}=A$ is at most $C_{\eqref{thm:BVCSunflower}}$, where $C_{\eqref{thm:BVCSunflower}}$ is a constant arising from \cref{thm:BVCSunflower}. Consequently, for each integer $0\le s\le d$, the number of indices $i\in [m]$ with $|B_{i}|=s$ is at most $C_{\eqref{thm:BVCSunflower}}\cdot\binom{n}{s}$.
\end{claim}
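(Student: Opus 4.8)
The plan is to fix a set $A\subseteq[n]$, set $\mathcal{F}_A:=\{F_i:i\in[m],\ B_i=A\}$, and prove $|\mathcal{F}_A|\le C_{\eqref{thm:BVCSunflower}}$; summing this over the $\binom{n}{s}$ choices of $A\in\binom{[n]}{s}$ then yields the ``consequently'' statement. Note that every member of $\mathcal{F}_A$ contains $A$ (since $B_i\subsetneq F_i$), and that $\mathcal{F}_A\subseteq\mathcal{F}$ has VC-dimension at most $d$. First I would dispose of the case $|A|=d$ by hand: if $F_i,F_j\in\mathcal{F}_A$ are distinct then $F_i=A\cup\{x_i\}$ and $F_j=A\cup\{x_j\}$ with $x_i\ne x_j$, so $F_i\cap F_j=A=B_i$, contradicting the defining property of $B_i$; hence $|\mathcal{F}_A|\le 1$ here. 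So from now on assume $s:=|A|\le d-1$.

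Suppose towards a contradiction that $|\mathcal{F}_A|\ge C_{\eqref{thm:BVCSunflower}}$, where $C_{\eqref{thm:BVCSunflower}}$ is the threshold from \cref{thm:BVCSunflower} (with $k=d+1>d$ and $r=d+2$) that forces a $(d+1)$-uniform VC-dimension-$\le d$ family of this size to contain a sunflower with $d+2$ petals. Applying it to $\mathcal{F}_A$ produces a sunflower $F_{i_1},\dots,F_{i_{d+2}}\in\mathcal{F}_A$ with core $Y$ and nonempty petals $P_\ell:=F_{i_\ell}\setminus Y$. Since every $F_{i_\ell}$ contains $A$ we get $A\subseteq Y$, and $Y\subsetneq F_{i_1}$ gives $P_1\ne\emptyset$. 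For $\ell\ge 2$ the petal $P_\ell$ is disjoint from $F_{i_1}=Y\cup P_1$.

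The heart of the argument is the maximal choice of $B_{i_1}$. Fix $p\in P_1$. Because $|A|\le d-1$, the set $A\cup\{p\}$ is a proper subset of $F_{i_1}$ of cardinality $|B_{i_1}|+1$, so by maximality it cannot be an admissible choice for $B_{i_1}$: there exists $H\in\mathcal{F}$ with $H\cap F_{i_1}=A\cup\{p\}$. Since $Y\subseteq F_{i_1}$ and $p\notin Y$, this gives $H\cap Y=(A\cup\{p\})\cap Y=A$. Moreover $|H\setminus F_{i_1}|=(d+1)-(|A|+1)=d-|A|$, and since the petals $P_2,\dots,P_{d+2}$ are pairwise disjoint and each disjoint from $F_{i_1}$, the set $H$ meets at most $d-|A|\le d-1$ of these $d+1$ petals. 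By pigeonhole there is $\ell_0\in\{2,\dots,d+2\}$ with $H\cap P_{\ell_0}=\emptyset$, whence
\[
H\cap F_{i_{\ell_0}}=(H\cap Y)\cup(H\cap P_{\ell_0})=A\cup\emptyset=A=B_{i_{\ell_0}},
\]
with $H\in\mathcal{F}$, contradicting the defining property of $B_{i_{\ell_0}}$. Thus $|\mathcal{F}_A|\le C_{\eqref{thm:BVCSunflower}}$.

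I expect the only genuine obstacle to be spotting this mechanism rather than carrying it out: the maximal choice of $B_{i_1}$ is precisely what forces a single petal element $p$ to be ``witnessed'' by some $H\in\mathcal{F}$, and one then wants a sunflower with a constant number of petals slightly exceeding $d$ (here $d+2$, well within the constant-size regime of \cref{thm:BVCSunflower}) to guarantee a petal that $H$ avoids, so that $H$ meets the corresponding sunflower member in exactly $A$. Everything else — the disjointness bookkeeping among $Y$, the petals, and $H$, and the count $|H\setminus F_{i_1}|=d-|A|$ — is routine, and the argument requires neither $Y\supsetneq A$ nor any separate treatment of $s=0$.
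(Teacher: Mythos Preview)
Your proof is correct and follows essentially the same strategy as the paper: both apply \cref{thm:BVCSunflower} to $\mathcal{F}_A$, invoke the maximality of $B_{i_1}$ to produce a witness $H\in\mathcal{F}$ whose intersection with the core is exactly $A$, and then use pigeonhole on the remaining petals to locate a sunflower member meeting $H$ in precisely $A$. The only cosmetic differences are that the paper adjoins the entire petal $G_1\setminus C$ to $A$ (rather than a single element $p$) and works with $d+3$ petals instead of $d+2$; incidentally, your intermediate inequality $d-|A|\le d-1$ fails when $|A|=0$, but the pigeonhole only requires $d-|A|<d+1$, so this slip is harmless.
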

\begin{poc}
    By~\cref{thm:BVCSunflower}, it suffices to show that for each subset $A\subseteq [n]$, the set system $\mathcal{F}_{A}:=\{F_{i}\in\mathcal{F}:B_{i}=A\}$ does not contain a sunflower of size $d+3$. 

    Suppose there exists a sunflower with sets $G_{1},G_{2},\ldots,G_{d+3}\in\mathcal{F}_{A}$ with $\bigcap_{j=1}^{d+3}G_{j}=C$. Obviously $|C|\le d$ since $\mathcal{F}_{A}$ is $(d+1)$-uniform, which yields that $|G_{j}\setminus C|>0$ for each $j\in [d+3]$. Moreover, by definition, $A$ must be a proper subset of $C$. By the rule of selection of $B_{i}$'s, we can see for $G_{1}$, there must exist some $F\in\mathcal{F}$ such that 
    \begin{equation*}
        F\cap G_{1}=A\cup (G_{1}\setminus C).
    \end{equation*}
    By definition of sunflower, $G_{2}\setminus C,G_{3}\setminus C,\ldots, G_{d+3}\setminus C$ must be pairwise disjoint, therefore, there exists some $k\in\{2,3,\ldots,d+3\}$ such that $F\cap (G_{k}\setminus C)=\emptyset$. However, this implies that the intersection $F\cap G_{k}=F\cap C=(F\cap G_{1})\cap C=A$, a contradiction. This finishes the proof.
\end{poc}

\subsection{The link of the set system and a near-transversal set}
Given \(F_1, \ldots, F_m\) and their corresponding subsets \(B_1, \ldots, B_m\), we now turn our attention to the \emph{link} of \(\mathcal{F}\). Specifically, for each element \(v \in [n]\), define  
\[
\mathcal{X}_v := \{F_k \setminus \{v\} : v \in B_k, k \in [m]\} \quad \text{and} \quad \mathcal{Y}_v := \{F_k \setminus \{v\} : v \in F_k \setminus B_k, k \in [m]\}.
\]
Then we can view $\mathcal{X}_{v}$ and $\mathcal{Y}_{v}$ as $d$-uniform set systems on ground set $[n]\setminus\{v\}$ and $\mathcal{X}_{v}\cup\mathcal{Y}_{v}$ as the link of $\mathcal{F}$ at $v$.
Clearly, $\mathcal{X}_{v}\cap \mathcal{Y}_{v}=\emptyset$. Moreover, there is a bijection $\mathcal{X}_v \sqcup \mathcal{Y}_v \to \{F_k \in \mathcal{F}: v \in F_k\}$ defined by $S \to S \cup \{v\}$, so each set \( F_k \setminus \{v\} \) in \( \mathcal{X}_v \) or \( \mathcal{Y}_v \) corresponds uniquely to a set \( F_k \) in \( \mathcal{F} \), and thus to a corresponding \( B_k \).

Next, for a fixed \(v \in [n]\), we proceed to estimate the sizes of \(\mathcal{X}_{v}\) and \(\mathcal{Y}_{v}\) respectively.

\begin{claim}\label{claim:XvSize}
For each $v\in [n]$, the VC-dimension of $\mathcal{X}_{v}$ is at most $d-1$. Consequently, $|\mathcal{X}_{v}| \leq \binom{n-1}{d-1}$.
\end{claim}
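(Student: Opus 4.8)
\textbf{Proof proposal for \cref{claim:XvSize}.}

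The plan is to show that $\mathcal{X}_v$ cannot shatter any $d$-element set, which by the Sauer--Shelah--Perles lemma (\cref{lem:SSLemma}) immediately gives $|\mathcal{X}_v| \le \sum_{i=0}^{d-1}\binom{n-1}{i}$; but in fact we want the sharper bound $\binom{n-1}{d-1}$, so we should instead argue that $\mathcal{X}_v \subseteq \binom{[n]\setminus\{v\}}{d}$ is itself an intersecting-type family in disguise, or directly bound a $d$-uniform family of VC-dimension $\le d-1$. Actually the cleanest route: a $d$-uniform set system on $n-1$ points with VC-dimension at most $d-1$ has size at most $\binom{n-1}{d-1}$. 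This last fact is exactly the Frankl--Pach theorem (\cref{thm:FranklPach}) applied with $d$ replaced by $d-1$ and $n$ replaced by $n-1$: a $d$-uniform family (i.e.\ $((d-1)+1)$-uniform) on a ground set of size $n-1$ with VC-dimension $\le d-1$ has at most $\binom{n-1}{d-1}$ members. So the entire content of the claim reduces to the first sentence: $\mathrm{VC}(\mathcal{X}_v) \le d-1$.

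To prove $\mathrm{VC}(\mathcal{X}_v)\le d-1$, suppose for contradiction that $\mathcal{X}_v$ shatters a set $S \subseteq [n]\setminus\{v\}$ with $|S| = d$. The key observation is the definition of $\mathcal{X}_v$: each member is of the form $F_k \setminus \{v\}$ where $v \in B_k$, and by our selection rule $B_k$ is a subset of $F_k$ of \emph{largest} cardinality with the property that $F \cap F_k \ne B_k$ for all $F \in \mathcal{F}$. Since $v \in B_k \subsetneq F_k$ and $|F_k| = d+1$, we have $|B_k| \le d$, and $B_k \setminus \{v\} \subseteq F_k \setminus \{v\}$. I would consider the set $S \cup \{v\}$, which has size $d+1$, and try to show that $\mathcal{F}$ shatters $S\cup\{v\}$, contradicting $\mathrm{VC}(\mathcal{F}) \le d$. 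Given any subset $A \subseteq S \cup \{v\}$, write $A' = A \setminus \{v\} \subseteq S$. Because $\mathcal{X}_v$ shatters $S$, there is some $F_k$ with $v \in B_k$ and $(F_k \setminus \{v\}) \cap S = A'$; then $F_k \cap (S \cup \{v\}) = A' \cup \{v\}$. This handles all subsets $A$ of $S \cup \{v\}$ that contain $v$. For subsets $A' \subseteq S$ not containing $v$, I need an $F \in \mathcal{F}$ with $F \cap (S \cup \{v\}) = A'$, i.e.\ $v \notin F$ and $F \cap S = A'$ --- and here is where the maximality of $B_k$ enters: fix the $F_k$ realizing $(F_k\setminus\{v\})\cap S = A'$ with $v \in B_k$; then by definition of $B_k$ there is $F \in \mathcal{F}$ with $F \cap F_k = B_k \cup (F_k \setminus B_k)$... wait, more carefully, the selection rule says $F \cap F_k \ne B_k$ for all $F$, which is a non-existence statement, so I must extract the shattering witnesses differently. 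The right move: since $\mathcal{X}_v$ shatters $S$, for the subset $S$ itself there is $F_k$ with $v \in B_k$ and $(F_k\setminus\{v\}) \cap S = S$, so $S \subseteq F_k \setminus\{v\}$, whence $|F_k \setminus \{v\}| = d$ forces $F_k = S \cup \{v\}$. Now consider $B_k \subseteq S \cup \{v\}$ with $v \in B_k$; maximality of $B_k$ plus $|B_k|\le d$ and the fact that $S\cup\{v\}\setminus B_k$ is nonempty means there is a missing subset of $F_k$; combined with the shattering of $S$ by $\mathcal{X}_v$ giving realizations of \emph{every} $A' \subseteq S$ by sets $F \in \mathcal{F}$ with $v \in F$, and the fact that these, restricted to $S \cup \{v\}$, all contain $v$, I claim the ``$v\notin A$'' subsets must be realized too or else $B_k$ was not maximal. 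I would formalize: among subsets of $F_k = S\cup\{v\}$, exactly those containing $v$ are realized as $F\cap F_k$ for $F\in\mathcal{F}$ with $v\in F\cap B$-type; since $B_k$ is the maximum-size non-realized subset and all realized ones contain $v$, $B_k$ omits some element, and picking $B_k$ of size $d$ of the form $(S\cup\{v\})\setminus\{w\}$ for a suitable $w \in S$ contradicts the requirement unless that set too is realized.

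The main obstacle is precisely this last bookkeeping: translating ``$\mathcal{X}_v$ shatters $S$'' into enough realized intersections $F \cap (S\cup\{v\})$, of \emph{both} parities with respect to $v$, to contradict $\mathrm{VC}(\mathcal{F}) \le d$, while correctly using the maximality clause in the choice of $B_k$ (which is what distinguishes this from a naive argument). I expect the clean statement to be: if $v \in B_k$ then every proper superset of $B_k$ inside $F_k$, and in particular $F_k$ restricted against the family, forces new realizations; iterating the shattering hypothesis along the coordinate $v$ then produces realizations of all $2^{d+1}$ subsets of $S \cup \{v\}$. Once $\mathrm{VC}(\mathcal{X}_v) \le d-1$ is established, the size bound $|\mathcal{X}_v| \le \binom{n-1}{d-1}$ follows immediately from \cref{thm:FranklPach} applied to the $d$-uniform family $\mathcal{X}_v$ on the $(n-1)$-element ground set $[n]\setminus\{v\}$.
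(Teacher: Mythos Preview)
Your overall strategy --- reduce to $\mathrm{VC}(\mathcal{X}_v)\le d-1$ and then invoke the Frankl--Pach bound (\cref{thm:FranklPach}) for the $d$-uniform family $\mathcal{X}_v$ on $[n]\setminus\{v\}$ --- is exactly right, and you even locate the key set $F_k=S\cup\{v\}$ with $v\in B_k$. The gap is in what you try to do with it. You aim to show that $\mathcal{F}$ shatters $S\cup\{v\}$, and you get stuck on subsets $A'\subseteq S$ with $v\notin A'$: you look for $F\in\mathcal{F}$ with $v\notin F$ and $F\cap S=A'$, then reach for the maximality of $B_k$ to manufacture such witnesses. This will not work --- there is no reason any $F\in\mathcal{F}$ avoiding $v$ should hit $S$ in a prescribed pattern, and maximality of $B_k$ says nothing about sets $F$ with $v\notin F$. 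In fact $\mathcal{F}$ need \emph{not} shatter $S\cup\{v\}$.

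The point you are circling is much simpler and requires neither the ``$v\notin A$'' case nor maximality. Once you know $F_k=S\cup\{v\}$ with $v\in B_k$, take the particular subset $A'=B_k\setminus\{v\}\subsetneq S$. Since $\mathcal{X}_v$ shatters $S$, there is $X_\ell=F_\ell\setminus\{v\}\in\mathcal{X}_v$ with $X_\ell\cap S=B_k\setminus\{v\}$; as $v\in F_\ell$ and $F_k=S\cup\{v\}$, this gives $F_\ell\cap F_k=(B_k\setminus\{v\})\cup\{v\}=B_k$, contradicting the defining property of $B_k$. That is the whole argument.

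The paper's proof is the contrapositive of this, phrased via the equivalent description of VC-dimension from the introduction: for every $X_k\in\mathcal{X}_v$ the proper subset $B_k\setminus\{v\}$ cannot occur as $X_k\cap X_\ell$ for any $X_\ell\in\mathcal{X}_v$ (else $F_k\cap F_\ell=B_k$), so $\mathcal{X}_v$ cannot shatter any of its own members, hence no $d$-set at all; then apply \cref{thm:FranklPach}. No use of the maximality clause is needed anywhere.
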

\begin{poc}
    For each $X_{k}=F_{k}\setminus\{v\}\in\mathcal{X}_{v}$, we have $X_{k}\cap X_{\ell}\neq B_{k}\setminus\{v\}$ for any other set \linebreak$X_{\ell}:=F_{\ell}\setminus\{v\}\in\mathcal{X}_{v}$, otherwise we have $F_{k}\cap F_{\ell}=B_{k}$, a contradiction. Therefore $\mathcal{X}_{v}\subseteq\binom{[n]\setminus\{v\}}{d}$ has VC-dimension at most $d-1$, which yields $|\mathcal{X}_{v}| \leq\binom{n-1}{d-1}$ by \cref{thm:FranklPach}.
\end{poc}

\begin{claim}\label{YvSize}
   For each \(v \in [n]\), we have \(|\mathcal{Y}_{v}| \le C_{1} \left( \binom{n-1}{d-1} - |\mathcal{X}_{v}| \right)^{\frac{d}{d-1}} + C_{2}n^{\frac{d(d-2)}{d-1}}\) for some constants $C_{1},C_{2}>0$ which only depend on $d$.
\end{claim}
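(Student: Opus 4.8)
The plan is to control $|\mathcal{Y}_v|$ through the size of its $(d-1)$-shadow $\partial_{d-1}\mathcal{Y}_v$. By the Kruskal--Katona theorem (\cref{thm:KK} applied with $d$ replaced by $d-1$), every $d$-uniform family $\mathcal{Y}$ with $|\mathcal{Y}|=\binom{\alpha}{d}$ satisfies $|\partial_{d-1}\mathcal{Y}|\ge\binom{\alpha}{d-1}$; solving for $\alpha$ and using $\binom{\alpha}{d}\le\alpha^{d}/d!$ yields an inverse bound of the shape $|\mathcal{Y}|\le C_{d}'\bigl(|\partial_{d-1}\mathcal{Y}|+1\bigr)^{d/(d-1)}$. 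Combined with the elementary inequality $(a+b)^{d/(d-1)}\le 2^{1/(d-1)}\bigl(a^{d/(d-1)}+b^{d/(d-1)}\bigr)$ for $a,b\ge 0$, it therefore suffices to prove the shadow estimate
\[
|\partial_{d-1}\mathcal{Y}_v| \;\le\; \binom{n-1}{d-1}-|\mathcal{X}_v| \;+\; O_d\!\left(n^{d-2}\right),
\]
since then the error term contributes only $O_d\bigl(n^{d(d-2)/(d-1)}\bigr)$ after raising to the power $d/(d-1)$, which is exactly the claimed form (with $\binom{n-1}{d-1}-|\mathcal{X}_v|\ge 0$ by \cref{claim:XvSize}).

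To establish this shadow bound I would introduce the auxiliary family
\[
\mathcal{B}_v \;:=\; \bigl\{\,B_k\setminus\{v\} \;:\; k\in[m],\ v\in B_k,\ |B_k|=d\,\bigr\} \;\subseteq\; \binom{[n]\setminus\{v\}}{d-1}.
\]
The first step is that $\mathcal{X}_v$ is, up to a negligible set, no larger than $\mathcal{B}_v$: the assignment $F_k\setminus\{v\}\mapsto B_k\setminus\{v\}$ is injective on the subfamily of $F_k\setminus\{v\}\in\mathcal{X}_v$ with $|B_k|=d$, because $B_k\setminus\{v\}=B_\ell\setminus\{v\}$ would force $B_k=B_\ell$ (both contain $v$), hence $B_k\setminus\{v\}\subseteq(F_k\setminus\{v\})\cap(F_\ell\setminus\{v\})$, and since these two $d$-sets are distinct their intersection would then equal the $(d-1)$-set $B_k\setminus\{v\}$, contradicting the observation in the proof of \cref{claim:XvSize}; on the other hand, the number of $F_k\setminus\{v\}\in\mathcal{X}_v$ with $|B_k|\le d-1$ is at most $\sum_{s=1}^{d-1}C_{\eqref{thm:BVCSunflower}}\binom{n-1}{s-1}=O_d(n^{d-2})$ by \cref{claim:Enumerate1}, where for each size $s$ we count only the $B_k$ containing the fixed element $v$. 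Hence $|\mathcal{B}_v|\ge|\mathcal{X}_v|-O_d(n^{d-2})$. The second and crucial step is that $\mathcal{B}_v$ is disjoint from $\partial_{d-1}\mathcal{Y}_v$: if some $T=B_j\setminus\{v\}\in\mathcal{B}_v$ satisfied $T\subseteq F_k\setminus\{v\}$ for some $F_k\setminus\{v\}\in\mathcal{Y}_v$ (so $v\in F_k\setminus B_k$), then $B_j=T\cup\{v\}\subseteq F_k$, so $B_j\subseteq F_j\cap F_k$; since $|B_j|=d$ and $|F_j|=d+1$, either $F_j=F_k$ — impossible, as then $v\in B_j=B_k$ contradicts $v\notin B_k$ — or $F_j\cap F_k=B_j$, which contradicts the defining property of $B_j$. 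Therefore $\partial_{d-1}\mathcal{Y}_v\subseteq\binom{[n]\setminus\{v\}}{d-1}\setminus\mathcal{B}_v$, giving $|\partial_{d-1}\mathcal{Y}_v|\le\binom{n-1}{d-1}-|\mathcal{B}_v|\le\binom{n-1}{d-1}-|\mathcal{X}_v|+O_d(n^{d-2})$, as needed.

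No individual deduction here is difficult; the point requiring care is the bookkeeping of the lower-order terms. In particular, one must ensure that discarding the sets $F_k$ with $|B_k|<d$ costs only $O_d(n^{d-2})$ — this is where it is essential to invoke \cref{claim:Enumerate1} with the refinement that only the $B_k$ containing the fixed vertex $v$ are relevant, since a cruder count would give $O_d(n^{d-1})$, which is too large to survive the exponentiation. One should also dispose of the degenerate cases where $\mathcal{Y}_v$ or $\partial_{d-1}\mathcal{Y}_v$ is empty before invoking \cref{thm:KK}, and track the case $d=2$ (where $n^{d(d-2)/(d-1)}$ is a constant) separately in the final arithmetic.
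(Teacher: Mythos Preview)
Your proposal is correct and follows essentially the same route as the paper: define the same auxiliary family $\mathcal{B}_v$, show $|\mathcal{B}_v|\ge|\mathcal{X}_v|-O_d(n^{d-2})$ via \cref{claim:Enumerate1} restricted to sets through $v$, prove $\partial_{d-1}\mathcal{Y}_v\cap\mathcal{B}_v=\emptyset$, and then apply Kruskal--Katona together with the subadditivity of $x\mapsto x^{d/(d-1)}$. You have in fact spelled out several details (the injectivity of $k\mapsto B_k\setminus\{v\}$ on $\{k:|B_k|=d\}$, the disjointness argument, and the need to count only $B_k$'s containing $v$) that the paper leaves implicit.
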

\begin{poc}
    For each $v\in [n]$, let 
    \[\mathcal{B}_{v}:=\{B_{k}\setminus\{v\}:v\in B_{k}, |B_{k}|=d\}.
    \]
    By definition of $X_{v}$, we can see
    \[
    |\mathcal{B}_{v}|=|\mathcal{X}_{v}|-|\{k\in [m]:v\in B_{k},|B_{k}|\le d-1\}|\ge |\mathcal{X}_{v}|-c_{1}n^{d-2},
    \]
    for some constant $c_{1}>0$, where the last inequality holds by~\cref{claim:Enumerate1}.
    On the other hand, by definition of $\mathcal{Y}_{v}$, any set $Y\in\mathcal{Y}_{v}$ cannot contain any set of $\mathcal{B}_{v}$. In other words, we have
    \[
    \partial_{d-1}\mathcal{Y}_{v}\cap\mathcal{B}_{v}=\emptyset.
    \]
    Then by~\cref{thm:KK}, we have
    \[
|\mathcal{Y}_{v}| \le c_{2} |\partial_{d-1} \mathcal{Y}_{v}|^{\frac{d}{d-1}} \le c_{2} \left( \binom{n-1}{d-1} - |\mathcal{B}_{v}| \right)^{\frac{d}{d-1}} \le c_{2} \left( \binom{n-1}{d-1} - |\mathcal{X}_{v}| + c_{1}n^{d-2} \right)^{\frac{d}{d-1}},
\]
for some constant $c_{2}>0$. Moreover, since $(x+y)^{\frac{d}{d-1}}\le 2(x^{\frac{d}{d-1}}+y^{\frac{d}{d-1}})$, we have
\[
|\mathcal{Y}_{v}| \le C_{1} \left( \binom{n-1}{d-1} - |\mathcal{X}_{v}| \right)^{\frac{d}{d-1}} + C_{2} n^{\frac{d(d-2)}{d-1}}.
\]
for some $C_{1},C_{2}$ which only depend on $d$. This finishes the proof.
\end{poc}

We now present one of the critical components of our proof: we can always identify a transversal set with favorable properties.

\begin{lemma}\label{lemma:NearTransversal}
   Let \(d \geq 2\) be an integer and \(n\) be sufficiently large compared to $d$. If the VC-dimension of \(\mathcal{F} \subseteq \binom{[n]}{d+1}\) is at most \(d\), then for any integer \(1 \leq s \leq n\), there exists a set \(J\) of size at most \(c_{3} \cdot s\) such that  
\[
\sum_{v \in J} |\mathcal{Y}_{v}| \geq |\mathcal{F}| - C_{3} \left( s^{-\frac{1}{d-1}} n^{d} + n^{\frac{d^{2} - d - 1}{d-1}} \right),
\]
where $c_{3},C_{3}>0$ are constants only depending on $d$.
\end{lemma}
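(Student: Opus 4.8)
The plan is to build $J$ greedily: repeatedly select the element $v$ maximizing $|\mathcal Y_v|$, add it to $J$, and delete from $\mathcal F$ all sets $F_k$ with $v \in F_k$. We continue for $c_3 s$ rounds (or until $\mathcal F$ is exhausted). The quantity $\sum_{v \in J} |\mathcal Y_v|$ then counts, with appropriate bookkeeping, the sets removed from $\mathcal F$ through their ``$\mathcal Y$-type'' incidences; what remains to control is the residual family $\mathcal F'$ of sets not yet hit by $J$, together with the contribution of $\mathcal X$-type incidences along the way. So the real content is a bound of the form $|\mathcal F'| \le C_3\bigl(s^{-1/(d-1)} n^d + n^{(d^2-d-1)/(d-1)}\bigr)$ at the end of the process, plus an accounting that the $\mathcal X_v$ contributions are lower-order.

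The key estimate driving the greedy bound is \cref{claim:XvSize} and \cref{YvSize}: writing $x_v := |\mathcal X_v| \le \binom{n-1}{d-1}$ and $\delta_v := \binom{n-1}{d-1} - x_v$, we have $|\mathcal Y_v| \le C_1 \delta_v^{d/(d-1)} + C_2 n^{d(d-2)/(d-1)}$. The point is that the degree of a vertex $v$ in $\mathcal F$ (restricted to the current residual family) is $|\mathcal X_v| + |\mathcal Y_v|$, which is at most $\binom{n-1}{d-1} + |\mathcal Y_v|$; I will run the greedy argument on the residual families and feed it through a convexity/power-mean inequality. Concretely, after $j$ rounds let $\mathcal F_j$ be the residual family and $N_j = |\mathcal F_j|$. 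Summing degrees, $\sum_{v} \deg_{\mathcal F_j}(v) = (d+1) N_j$, so the maximum degree is at least $(d+1)N_j/n$. Split into the $\mathcal X$- and $\mathcal Y$-parts of the chosen vertex; if the $\mathcal X$-part ever dominates, i.e. $|\mathcal X_v|$ restricted to $\mathcal F_j$ is comparable to $N_j / n$, then since $|\mathcal X_v| \le \binom{n-1}{d-1}$ we get $N_j = O(n^d)$, but more usefully, summing the $\mathcal X$-part contributions over all rounds is bounded by $c_3 s \cdot \binom{n-1}{d-1} = O(s\, n^{d-1})$, which is absorbed into the error term provided $s \le n$ (the error term contains $s^{-1/(d-1)} n^d \ge n^{d - 1/(d-1)} \gg n^{d-1} \ge s n^{d-1}/n$ — one checks the exponents line up). Otherwise the $\mathcal Y$-part is at least $\Omega(N_j/n)$, and then the recursion $N_{j+1} \le N_j - \Omega(N_j/n)$ together with the sunflower-type bound $|\mathcal Y_v| \le C_1 \delta_v^{d/(d-1)} + C_2 n^{d(d-2)/(d-1)}$ forces $\delta_v^{d/(d-1)} = \Omega(N_j/n)$ once $N_j$ exceeds the secondary term $n^{d(d-2)/(d-1)} \cdot n = n^{(d^2-d-1)/(d-1)}$; hence $\delta_v = \Omega\bigl((N_j/n)^{(d-1)/d}\bigr)$, meaning $x_v \le \binom{n-1}{d-1} - \Omega\bigl((N_j/n)^{(d-1)/d}\bigr)$.

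Now I invoke the VC-dimension bound on the link in the other direction. By \cref{claim:XvSize}, for \emph{every} $v$ we have $|\mathcal X_v| \le \binom{n-1}{d-1}$, and a supersaturation-style count shows that if many vertices $v$ have $x_v$ bounded away from $\binom{n-1}{d-1}$ by $\Omega(t)$, then $\sum_v x_v \le n\binom{n-1}{d-1} - \Omega(nt)$; but $\sum_v x_v = d \cdot |\{k : |B_k| = d\}| + \sum_v|\{k: v \in B_k, |B_k| < d\}| \le d\, m + O(n^{d-1})$ by \cref{claim:Enumerate1}, which after comparison with $n\binom{n-1}{d-1}$ (and using $m \le \binom{n}{d}$ from \cref{thm:FranklPach}) caps how large $t$ and hence the removed mass can be — giving the stated bound on the final residual $|\mathcal F'|$. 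Putting the pieces together: each round removes $\Omega\bigl((N_j/n)^{(d-1)/d} \cdot (N_j/n)\bigr)$-ish from $\mathcal F$ via $\mathcal Y$ incidences once $N_j$ is above the threshold $T^* := C_3' n^{(d^2-d-1)/(d-1)}$, and a standard analysis of the recursion $N_{j+1} \le N_j(1 - \Omega((N_j/n)^{1/d}/n))$... wait, more simply, after $c_3 s$ rounds either $\mathcal F$ is empty, or $N_{c_3 s} \le \max\{T^*,\ C_3'' s^{-1/(d-1)} n^d\}$, because the per-round decrement when $N_j > C_3'' s^{-1/(d-1)} n^d$ is at least $N_j/(c_3 s)$. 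The $\mathcal Y_v$ sum over $J$ then accounts for $|\mathcal F| - N_{c_3 s} - (\text{$\mathcal X$-contributions})$, and both subtracted terms fit inside $C_3(s^{-1/(d-1)} n^d + n^{(d^2-d-1)/(d-1)})$.

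The main obstacle I anticipate is the bookkeeping between ``$\mathcal Y$-type removals counted by $\sum_{v\in J}|\mathcal Y_v|$'' and ``actual sets removed from $\mathcal F$'': a set $F_k$ with $v \in F_k \setminus B_k$ is counted in $\mathcal Y_v$, but $F_k$ may also have been counted earlier through another vertex $w \in F_k \setminus B_k$ with $w \in J$. One must either accept a bounded multiplicity (each $F_k$ counted at most $d+1$ times, harmless since it only changes constants) or, cleaner, define $\mathcal Y_v$ relative to the residual family at the moment $v$ is selected — then each $F_k$ is counted exactly once, in the round that removes it, \emph{unless} it is removed via an $\mathcal X$-incidence, which is exactly the lower-order term being tracked. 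Making this precise, and verifying that the interplay of the two error terms $s^{-1/(d-1)}n^d$ and $n^{(d^2-d-1)/(d-1)}$ in the recursion is consistent (in particular that $n^{(d^2-d-1)/(d-1)} = n^{d(d-2)/(d-1)} \cdot n$ is indeed the right threshold coming from \cref{YvSize}), is the technical heart of the argument.
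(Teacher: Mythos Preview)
Your greedy approach has real gaps, and the paper's proof is quite different (and much simpler).

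The central missing estimate is a \emph{lower} bound on $\sum_v |\mathcal X_v|$: you write $\sum_v x_v \le dm + O(n^{d-1})$, but what is actually needed is $\sum_v x_v = \sum_k |B_k| \ge dm - O(n^{d-1})$, obtained from \cref{claim:Enumerate1} (at most $O(n^{d-1})$ indices have $|B_k| \le d-1$). Combined with the standing assumption $m \ge \binom{n-1}{d}$, this yields the key global bound
\[
\sum_{v\in[n]} \delta_v \;\le\; n\binom{n-1}{d-1} - d\binom{n-1}{d} + O(n^{d-1}) \;=\; O(n^{d-1}).
\]
Your ``supersaturation-style'' paragraph has the inequality pointing the wrong way and never extracts this. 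Second, the greedy recursion is unjustified: the only averaging available is $\max_v |\mathcal Y_v| \ge (\sum_v |\mathcal Y_v|)/n \ge N_j/n$, which after $c_3 s$ rounds gives only $N_{c_3 s} \le N_0\, e^{-O(s/n)}$, useless for $s \ll n$. Your assertion that the per-round decrement is at least $N_j/(c_3 s)$ once $N_j > C'' s^{-1/(d-1)} n^d$ has no supporting argument, and the detour ``$|\mathcal Y_v|$ large $\Rightarrow \delta_v$ large'' goes in the wrong direction for lower-bounding $\max_v |\mathcal Y_v|$.

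The paper avoids iteration entirely. Once $\sum_v \delta_v = O(n^{d-1})$ is in hand, one simply sets $J := \{v : \delta_v \ge n^{d-1}/s\}$, so $|J| = O(s)$ by Markov. For $v \notin J$, \cref{YvSize} together with $\delta_v^{d/(d-1)} \le \delta_v \cdot (n^{d-1}/s)^{1/(d-1)}$ gives $\sum_{v \notin J} |\mathcal Y_v| = O\bigl(s^{-1/(d-1)} n^d + n^{(d^2-d-1)/(d-1)}\bigr)$, and subtracting from $\sum_{v\in[n]} |\mathcal Y_v| \ge |\mathcal F|$ finishes. The idea you are missing is this one-shot Markov-plus-H\"older step on the $\delta_v$'s, not a greedy peeling of $\mathcal F$.
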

\begin{proof}
    We first apply double counting based on~\cref{claim:XvSize} and~\cref{YvSize}. By definitions, we first observe that  
\[
\sum_{v \in [n]} |\mathcal{X}_v| = \sum_{k \in [m]} |B_k|.
\]  
Notice that 
\[
\sum_{k \in [m]} |B_k| = \sum_{j=0}^{d} j \cdot |\{k \in [m] : |B_k| = j\}| \geq d \cdot |\mathcal{F}| - d \cdot |\{k \in [m] : |B_k| \leq d-1\}|.
\]  
By~\cref{claim:Enumerate1} and our assumption $\abs{\cF} \geq \binom{n - 1}{d}$, we have  
\[
\sum_{v \in [n]} |\mathcal{X}_{v}| \geq d \binom{n-1}{d} - c_{4} n^{d-1},
\]  
for some constant \(c_{4} > 0\), which further yields that
\begin{equation}\label{equation:SumXv}
    \sum_{v \in [n]} \left( \binom{n-1}{d-1} - |\mathcal{X}_{v}| \right) \leq n\binom{n-1}{d-1} - d\binom{n-1}{d} + c_{5}n^{d-1} \leq c_{5}n^{d-1},
\end{equation}
where $c_{5}=c_{4}\cdot d$. Note that by~\cref{claim:XvSize}, every term on the left-hand side is non-negative. On the other hand, since $\mathcal{F}$ is $(d+1)$-uniform, we have
\[
\sum\limits_{v\in [n]}(|\mathcal{X}_{v}|+|\mathcal{Y}_{v}|)=(d+1)\cdot |\mathcal{F}|.
\]
This leads to
\begin{equation}\label{equation:YvSum}
    \sum\limits_{v\in [n]}|\mathcal{Y}_{v}|=(d+1)|\mathcal{F}|-\sum\limits_{v\in [n]}|\mathcal{X}_{v}|=\sum\limits_{k\in [m]}(|F_{k}|-|B_{k}|)\ge |\mathcal{F}|.
\end{equation}
Now we take
\[J=\left\{v\in [n]:\binom{n-1}{d-1}-|\mathcal{X}_{v}|\ge \frac{n^{d-1}}{s}\right\}.\]
Then by inequality~\eqref{equation:SumXv}, we have $|J|\le c_{3} s$ for some constant $c_{3}>0$ which only depends on $d$. On the other hand, for each $v\in [n]\setminus J$,~\cref{YvSize} gives that
\[|\mathcal{Y}_{v}|\le C_{1}\left(\binom{n-1}{d-1}-|\mathcal{X}_{v}|\right)^{\frac{d}{d-1}}+C_{2}n^{\frac{d(d-2)}{d-1}}\le C_{1}s^{-\frac{1}{d-1}}\cdot n\left(\binom{n-1}{d-1}-|\mathcal{X}_{v}|\right)+C_{2}n^{\frac{d(d-2)}{d-1}}.\]
By inequality~\eqref{equation:SumXv}, we have
\[
\sum\limits_{v\in [n]\setminus J}|\mathcal{Y}_{v}|\le C_{1}'s^{-\frac{1}{d-1}}n^{d}+C_{2}'n^{\frac{d^{2}-d-1}{d-1}},
\]
for some constant $C_{1}',C_{2}'>0$ which only depend on $d$. Finally, by inequality~\eqref{equation:YvSum}, we have
\[\sum\limits_{v\in J}|\mathcal{Y}_{v}|\ge |\mathcal{F}|-C_{3}\left(s^{-\frac{1}{d-1}}n^{d}+n^{\frac{d^{2}-d-1}{d-1}}\right),\]
for some constant $C_{3}>0$. This finishes the proof.
\end{proof}
\subsection{Controlling the partition via near-injections}
Let $s\leq \frac{n}{4c_3}$ be an integer, which we will set later, and $J\subseteq [n]$ be the subset satisfying the conditions in~\cref{lemma:NearTransversal}. We partition $\mathcal{F}$ into six parts based on the chosen $J$: \(\mathcal{F}:=\bigcup_{i=1}^{6}\mathcal{T}_{J}^{i},\) where
\begin{itemize}
    \item $\mathcal{T}_{J}^{1}$ consists of the sets $F_{k}\in\mathcal{F}$ with $F_{k}\in\binom{[n]\setminus J}{d+1}$ and $B_{k}\in\binom{[n]\setminus J}{d-1}$.
    \item $\mathcal{T}_{J}^{2}$ consists of the sets $F_{k}\in\mathcal{F}$ with $B_{k}\in\binom{[n]\setminus J}{d}$.
    \item $\mathcal{T}_{J}^{3}$ consists of the sets $F_{k}\in\mathcal{F}$ with $|F_{k}\cap J|=1$ and $B_{k}\in\binom{[n]\setminus J}{d-1}$.
    \item $\mathcal{T}_{J}^{4}$ consists of the sets $F_{k}\in\mathcal{F}$ with $|F_{k}\cap J|=1$ and $B_{k}\in J\times \binom{[n]\setminus J}{d-1}$.
    \item $\mathcal{T}_{J}^{5}$ consists of the sets $F_{k}\in\mathcal{F}$ with $|F_{k}\cap J|=2$ (i.e., $F_{k}\in\binom{J}{2}\times\binom{[n]\setminus J}{d-1}$) and $F_{k}\cap ([n]\setminus J)\subseteq B_{k}$.
    \item $\mathcal{T}_{J}^{6}=\mathcal{F}\setminus (\bigcup\limits_{i=1}^{5}\mathcal{T}_{J}^{i})$.   
\end{itemize}
As an example and motivation for our subsequent arguments, we compute these families explicitly for Mubayi and Zhao's construction in \cref{sec:conclusion}. 

Now we define a $d$-uniform hypergraph $\mathcal{G}_{J}$ with the vertex set $[n]\setminus J$ and the edge set $$E(\mathcal{G}_{J}):=\{F_{k}\setminus J:|F_{k}\cap J|=1,F_{k}\in\mathcal{F}\}.$$ 
Assume that $|\mathcal{F}|\ge\binom{n-1}{d}$. By the selection of \( J \) and~\cref{lemma:NearTransversal}, we have $|J|\le c_{3}\cdot s$ and
\[
\sum_{v \in J} |\mathcal{Y}_{v}| = |\mathcal{F}| - C_{3} \left( s^{-\frac{1}{d-1}} n^{d} + n^{\frac{d^{2} - d - 1}{d-1}} \right).
\]  
By definition we have \[E(\mathcal{G}_{J}) \supseteq \bigcup_{v \in J} \bigg(\mathcal{Y}_{v} \cap \binom{[n] \setminus J}{d}\bigg).\] Moreover, we define
\[\mathcal{Y}_{v}':=\{F_{k}\setminus\{v\}:v\in F_{k}\setminus B_{k},|B_{k}|=d\}.\]
Observe that
$$\mathcal{Y}_{v} = \mathcal{Y}_{v}' \cup \{F_{k}\setminus\{v\}:v\in F_{k}\setminus B_{k},|B_{k}| \leq d - 1\}.$$
By~\cref{claim:Enumerate1}, we have $|\mathcal{Y}_{v}'|\ge |\mathcal{Y}_{v}|-C_{\eqref{thm:BVCSunflower}}\sum\limits_{j=0}^{d-1}\binom{n}{j}$. By the definition of $B_k$, the sets in $\mathcal{Y}'_v$ are pairwise disjoint. Thus, we can obtain a lower bound for the number of edges in $\mathcal{G}_{J}$.

\begin{equation}\label{equ:AboutGJ}
\begin{aligned}
    |E(\mathcal{G}_{J})| &\geq \sum_{v \in J} \bigg|\mathcal{Y}_{v}' \cap \binom{[n] \setminus J}{d}\bigg| \\
    &\geq \sum_{v \in J} \Bigg(|\mathcal{Y}_{v}| - \bigg(\binom{n}{d} - \binom{n-|J|}{d}\bigg)-C_{\eqref{thm:BVCSunflower}}\sum_{j=0}^{d-1}\binom{n}{j}\Bigg) \\
    &\geq \sum_{v \in J} \big(|\mathcal{Y}_{v}| - c_{6}|J|n^{d-1}\big) \\
    &\geq |\mathcal{F}| - C_{4}\big(s^{-\frac{1}{d-1}}n^{d} + n^{\frac{d^{2}-d-1}{d-1}} + |J|^{2}n^{d-1}\big) \\
    &\geq \binom{n-|J|}{d} - C_{4}\big(s^{-\frac{1}{d-1}}n^{d} + n^{\frac{d^{2}-d-1}{d-1}} + s^{2}n^{d-1}\big),
\end{aligned}
\end{equation}
where $C_{4},c_{6}>0$ are constants which only depend on $d$ and the fourth inequality follows from~\cref{lemma:NearTransversal}. By inequality~\eqref{equ:AboutGJ} and straightforward calculations, we obtain  
\[
|E(\mathcal{G}_{J})| \geq \binom{n-|J|}{d} - C_{4}\big(s^{-\frac{1}{d-1}}n^{d} + n^{\frac{d^{2}-d-1}{d-1}} + s^{2}n^{d-1}\big)= \binom{n - |J|}{d} - Kn,
\]  
where \[ K:=K(s) = C_{4}\left(s^{-\frac{1}{d-1}}n^{d-1}+n^{\frac{d^{2}-2d}{d-1}}+s^{2}n^{d-2}\right).\]

We proceed to carefully analyze the size of each subfamily \( \mathcal{T}_{J}^{i} \) for \( i \in [6] \) and establish a series of claims. Before doing so, we present a simple fact, derived from the denseness of the hypergraph \( \mathcal{G}_{J} \), that will be repeatedly used in the following proofs.

\begin{fact}\label{fact:DenseHypergraph}
For $|J|\le \frac{n}{4}$ (guaranteed by the assumption $s\leq \frac{n}{4c_3}$), let $\mathcal{S}\subseteq\binom{[n]\setminus J}{d-1}$ consist of $(d-1)$-subsets which are contained in at most $100d+|J|$ edges of $\mathcal{G}_{J}$, then $|\mathcal{S}|\le C_{5}K$ for some constant $C_{5}>0$.
\end{fact}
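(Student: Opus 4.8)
The plan is to use the near-completeness of $\mathcal{G}_J$. Writing $\overline{\mathcal{G}_J} := \binom{[n]\setminus J}{d}\setminus E(\mathcal{G}_J)$ for the complementary $d$-uniform hypergraph on $[n]\setminus J$, the bound $|E(\mathcal{G}_J)|\ge\binom{n-|J|}{d}-Kn$ established just above is equivalent to $|\overline{\mathcal{G}_J}|\le Kn$. I would show that each member of $\mathcal{S}$ must lie below $\Omega(n)$ edges of $\overline{\mathcal{G}_J}$, and then finish by a one-line double count against the global bound $|\overline{\mathcal{G}_J}|\le Kn$.

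First I would bound, for a fixed $T\in\mathcal{S}$, the number of $d$-sets $D\in\binom{[n]\setminus J}{d}$ with $T\subset D$ that are \emph{not} edges of $\mathcal{G}_J$. There are exactly $(n-|J|)-(d-1)$ sets $D\supset T$ in $\binom{[n]\setminus J}{d}$, and by the defining property of $\mathcal{S}$ at most $100d+|J|$ of them are edges of $\mathcal{G}_J$; hence at least
\[
(n-|J|-d+1)-(100d+|J|) \;=\; n-2|J|-101d+1 \;\ge\; \tfrac{n}{2}-101d+1 \;\ge\; \tfrac{n}{4}
\]
of them lie in $\overline{\mathcal{G}_J}$, where I used $|J|\le n/4$ and then that $n$ is large compared to $d$. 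Next I would count pairs $(T,D)$ with $T\in\mathcal{S}$, $D\in\overline{\mathcal{G}_J}$, and $T\subset D$: by the previous estimate the number of such pairs is at least $\frac{n}{4}\,|\mathcal{S}|$, while on the other hand each $D\in\overline{\mathcal{G}_J}$ has only $\binom{d}{d-1}=d$ subsets of size $d-1$, so at most $d$ of them lie in $\mathcal{S}$, giving at most $d\,|\overline{\mathcal{G}_J}|\le dKn$ pairs. Comparing the two bounds yields $|\mathcal{S}|\le 4d\,K$, so the fact holds with $C_5=4d$.

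I do not expect a genuine obstacle here; the statement is a soft consequence of the denseness of $\mathcal{G}_J$. The only points that need a little care are (i) that the threshold $100d+|J|$ is small relative to the co-degree $n-|J|-d+1$ of a $(d-1)$-set, so that a constant fraction of the $d$-sets above any $T\in\mathcal{S}$ survive as non-edges — this is exactly where the hypothesis $|J|\le n/4$ (equivalently $s\le n/(4c_{3})$) is used; and (ii) that the final constant $C_5=4d$ is independent of $s$, which is automatic since all $s$-dependence has already been packaged into $K=K(s)$.
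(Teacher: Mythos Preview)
Your argument is correct and is exactly the natural double count the paper has in mind; indeed, the paper states this fact without proof, calling it ``a simple fact, derived from the denseness of the hypergraph $\mathcal{G}_J$,'' and your derivation (each $T\in\mathcal{S}$ lies below at least $n/4$ non-edges, while $|\overline{\mathcal{G}_J}|\le Kn$) is the intended justification. The explicit constant $C_5=4d$ you obtain is fine.
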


\begin{claim}\label{claim:T1}
    $|\mathcal{T}_{J}^{1}|\le C_{6} K$ for some constant $C_{6}>0$.
\end{claim}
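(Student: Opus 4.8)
The plan is to bound $|\mathcal{T}_J^1|$ by constructing a near-injection from $\mathcal{T}_J^1$ into a small family and invoking \cref{fact:DenseHypergraph}. Recall that every $F_k \in \mathcal{T}_J^1$ satisfies $F_k \subseteq [n]\setminus J$ and $B_k \in \binom{[n]\setminus J}{d-1}$, so $F_k$ contains $B_k$ and two extra elements, say $F_k = B_k \cup \{x_k, y_k\}$ with $x_k, y_k \in [n]\setminus J$. The key structural point coming from the (maximal) choice of $B_k$ is that there exist sets $F', F'' \in \mathcal{F}$ with $F' \cap F_k = B_k \cup \{x_k\}$ and $F'' \cap F_k = B_k \cup \{y_k\}$ (these witness that neither $B_k \cup \{x_k\}$ nor $B_k \cup \{y_k\}$ can serve as the "avoided'' set, which is exactly what maximality of $|B_k| = d-1$ forces). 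First I would try to send $F_k \mapsto B_k$ and argue this map is injective on a large subfamily: if $F_k, F_\ell \in \mathcal{T}_J^1$ have $B_k = B_\ell =: B$, then $F_k, F_\ell$ are two distinct $(d+1)$-sets of the form $B \cup \{\cdot,\cdot\}$, and I want to show that $B$ can only be repeated a bounded number of times, or else land in the exceptional family $\mathcal{S}$ of \cref{fact:DenseHypergraph}.

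The mechanism I expect to work: for $F_k \in \mathcal{T}_J^1$, the element(s) of $F_k \setminus B_k$ give, together with a vertex of $J$, edges of $\mathcal{G}_J$ — more precisely I would look at how $B_k$ (a $(d-1)$-subset of $[n]\setminus J$) sits inside edges of $\mathcal{G}_J$. The witnesses $F', F''$ described above, when they have $|F' \cap J| = 1$, contribute edges of $\mathcal{G}_J$ through $B_k$; combined with an EKR/sunflower-type or pigeonhole argument, one shows that if $B_k$ is contained in many edges of $\mathcal{G}_J$ then the intersection pattern forces a violation of $\mathrm{VC}(\mathcal F) \le d$ or of the definition of $B_k$. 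Hence the $(d-1)$-sets $B_k$ arising from $\mathcal{T}_J^1$ with high multiplicity must all lie in $\mathcal{S}$, which has size $\le C_5 K$; the remaining $B_k$'s have bounded multiplicity (a constant depending only on $d$, via \cref{claim:Enumerate1}-type reasoning or a direct sunflower argument on $\mathcal{F}_{B_k}$), and each such $B_k$ is itself forced into $\mathcal{S}$ or appears $O_d(1)$ times. Either way $|\mathcal{T}_J^1| \le (100d + |J| + O_d(1)) \cdot |\mathcal{S}| + (\text{small}) = O_d(K)$ since $|J| \le c_3 s \le n$ and $K$ already dominates.

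Concretely, the cleanest route is: (i) partition $\mathcal{T}_J^1$ by the value of $B_k$; (ii) for a fixed value $B$, observe that $\mathcal{F}_B = \{F_k \in \mathcal{F} : B_k = B\}$ has size $\le C_{\eqref{thm:BVCSunflower}}$ by \cref{claim:Enumerate1}, so each $B$ contributes at most $C_{\eqref{thm:BVCSunflower}}$ sets to $\mathcal{T}_J^1$; (iii) show that the set of $B \in \binom{[n]\setminus J}{d-1}$ that occur as some $B_k$ with $F_k \in \mathcal{T}_J^1$ is contained in $\mathcal{S} \cup (\text{a set of size } O_d(K))$, by arguing that such a $B$ must be contained in few edges of $\mathcal{G}_J$. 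For step (iii), the point is that if $B$ were contained in more than $100d + |J|$ edges of $\mathcal{G}_J$, then among the corresponding $(d+1)$-sets $B \cup \{z_i\} \cup \{w_i\}$ (with $w_i \in J$), enough of them avoid any fixed $F_k = B \cup \{x_k, y_k\} \in \mathcal{T}_J^1$ in the right way to contradict the requirement $F \cap F_k \ne B_k$ for all $F \in \mathcal{F}$, using that $B \cup \{x_k\}$ and $B \cup \{y_k\}$ both fail to be valid avoided sets. Combining, $|\mathcal{T}_J^1| \le C_{\eqref{thm:BVCSunflower}} \cdot (|\mathcal{S}| + O_d(K)) \le C_6 K$.

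The main obstacle I anticipate is step (iii): carefully extracting from "the definition of $B_k$ with $|B_k| = d-1$ maximal'' the precise statement that forces high-multiplicity or high-$\mathcal{G}_J$-degree values of $B$ into $\mathcal{S}$. One has to juggle three facts simultaneously — maximality of $|B_k|$, the avoidance condition $F \cap F_k \ne B_k$, and the edge structure of $\mathcal{G}_J$ — and ensure the edges produced actually lie in $\mathcal{G}_J$ (i.e. the witness sets meet $J$ in exactly one point, which may itself need a short case split handling witnesses disjoint from $J$ versus meeting it in $\ge 2$ points, the latter being negligible in count). The arithmetic (checking $|J|^2 n^{d-1}$, $n^{(d^2-d-1)/(d-1)}$ and $s^2 n^{d-1}$ are all absorbed into $Kn$, equivalently the claimed bound into $K$) is routine given the definition of $K$.
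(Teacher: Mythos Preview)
Your approach is correct and matches the paper's: partition $\mathcal{T}_J^1$ by the value of $B_k$, bound each fibre by $C_{\eqref{thm:BVCSunflower}}$ via \cref{claim:Enumerate1}, and show that every such $B_k$ lies in $\mathcal{S}$ so that \cref{fact:DenseHypergraph} finishes. The paper's version of your step (iii) is cleaner than you anticipate, since neither maximality of $|B_k|$ nor the witnesses $F', F''$ nor any case split on $|F'\cap J|$ is needed: for any edge $\boldsymbol{e}$ of $\mathcal{G}_J$ containing $B_k$ there is some $F_\ell = \boldsymbol{e} \cup \{v\}$ with $v \in J$, and since $F_k \subseteq [n]\setminus J$ one has $F_k \cap F_\ell = F_k \cap \boldsymbol{e} \supseteq B_k$, whence the defining property $F_k \cap F_\ell \neq B_k$ forces $\boldsymbol{e} \subseteq F_k$, so there are at most $d+1$ (indeed at most $2$) such edges.
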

\begin{poc}
Recall that \( \mathcal{T}_{J}^{1} \) consists of the sets \( F_{k} \in \mathcal{F} \) such that \( F_{k} \subseteq \binom{[n] \setminus J}{d+1} \) and \( B_{k} \in \binom{[n] \setminus J}{d-1} \). Let \( F_{k} \in \mathcal{T}_{J}^{1} \) with \( B_{k} \subseteq \binom{[n] \setminus J}{d-1} \). If \( \boldsymbol{e} \in E(\mathcal{G}_{J}) \subseteq \binom{[n] \setminus J}{d} \) is an edge containing \( B_{k} \), then by definition, there exists some vertex \( v \in J \) such that \( F_{\ell} = \boldsymbol{e} \cup \{v\} \in \mathcal{F} \). Since \( F_{k} \cap F_{\ell} \neq B_{k} \), we have \( \boldsymbol{e} \subseteq F_{k} \). Therefore, there are at most \( d+1 \) edges in \( \mathcal{G}_{J} \) containing \( B_{k} \). 

Moreover, by \cref{claim:Enumerate1}, each such \( B_{k} \) corresponds to at most \( C_{\eqref{thm:BVCSunflower}} \) distinct sets in \( \mathcal{F} \). Combining this with \cref{fact:DenseHypergraph}, we obtain \( |\mathcal{T}_{J}^{1}| \leq C_{6} K \), where \( C_{6}=C_{5}\cdot C_{\eqref{thm:BVCSunflower}}.\) This finishes the proof. 
\end{poc}

\begin{claim}\label{claim:T2T3}    $|\mathcal{T}_{J}^{2}\cup\mathcal{T}_{J}^{3}|\le\binom{n-|J|}{d}+C_{7}(K+\sqrt{Kn^{d-1}}+n^{d-2})$ for some constant $C_{7}>0$.
\end{claim}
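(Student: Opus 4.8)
The goal is to bound $|\mathcal{T}_J^2 \cup \mathcal{T}_J^3|$. Recall $\mathcal{T}_J^2$ consists of $F_k$ with $B_k \in \binom{[n]\setminus J}{d}$, and $\mathcal{T}_J^3$ consists of $F_k$ with $|F_k \cap J| = 1$ and $B_k \in \binom{[n]\setminus J}{d-1}$. Both families have the feature that their ``trace outside $J$'' is naturally $d$-dimensional: for $F_k \in \mathcal{T}_J^2$ we have $B_k \subseteq [n]\setminus J$ of size $d$, and for $F_k \in \mathcal{T}_J^3$ we have $F_k \setminus J \in \binom{[n]\setminus J}{d}$. So the plan is to define a map $\phi$ from $\mathcal{T}_J^2 \cup \mathcal{T}_J^3$ into $\binom{[n]\setminus J}{d}$: send $F_k \in \mathcal{T}_J^2$ to $B_k$, and send $F_k \in \mathcal{T}_J^3$ to $F_k \setminus J$. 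The image lies in $\binom{[n]\setminus J}{d}$, which has size $\binom{n-|J|}{d}$, so if $\phi$ were injective we would be done with the main term; the error terms will come from counting the failures of injectivity.

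**Controlling the collisions.**

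First I would handle collisions within $\mathcal{T}_J^2$: if $\phi(F_k) = \phi(F_\ell) = B_k = B_\ell$ for $F_k, F_\ell \in \mathcal{T}_J^2$, then $|B_k| = d$ forces $F_k \cap F_\ell = B_k$ (since $B_k \subseteq F_\ell$), contradicting the defining property of $B_k$ — exactly the mechanism in \cref{claim:Injective}. So $\phi$ is injective on $\mathcal{T}_J^2$. Next, collisions within $\mathcal{T}_J^3$: if $F_k \setminus J = F_\ell \setminus J =: \boldsymbol{e}$ for distinct $F_k, F_\ell \in \mathcal{T}_J^3$, then $F_k = \boldsymbol{e} \cup \{u\}$, $F_\ell = \boldsymbol{e} \cup \{v\}$ with $u \neq v$ in $J$. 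Here $\boldsymbol{e} \in E(\mathcal{G}_J)$ is an edge; so the number of $F_k \in \mathcal{T}_J^3$ mapping to a fixed $\boldsymbol{e}$ is at most the number of $v \in J$ with $\boldsymbol{e}\cup\{v\} \in \mathcal{F}$, hence at most $|J| \le c_3 s$. But I can do better: I only want a bounded number of ``bad'' edges. The real source of error is collisions between $\mathcal{T}_J^2$ and $\mathcal{T}_J^3$, and multiplicities in $\mathcal{T}_J^3$. For a fixed $\boldsymbol{e} \in \binom{[n]\setminus J}{d}$ in the image, if $\boldsymbol{e} = B_k$ for some $F_k \in \mathcal{T}_J^2$ and also $\boldsymbol{e} = F_\ell\setminus J$ for some $F_\ell \in \mathcal{T}_J^3$, then $F_\ell \supseteq \boldsymbol{e} = B_k$ forces $F_k \cap F_\ell = B_k$ (as $|B_k|=d$), a contradiction. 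So the images of $\mathcal{T}_J^2$ and of $\mathcal{T}_J^3$ under $\phi$ are \emph{disjoint} sets in $\binom{[n]\setminus J}{d}$. The only remaining defect is multiplicity inside $\mathcal{T}_J^3$: an edge $\boldsymbol{e}$ can be hit up to $|J|$ times.

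**The main obstacle and how to resolve it.**

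The main obstacle is that multiplicity $|J| \approx s$ inside $\mathcal{T}_J^3$ is too large — summing over all $\binom{n-|J|}{d}$ possible edges would give an error term of order $s\binom{n}{d}$, far worse than what is claimed. The fix is a \emph{dispersion / double-counting} argument using the edge count of $\mathcal{G}_J$. Here is the key point: if $\boldsymbol{e} = F_k \setminus J \in E(\mathcal{G}_J)$ is hit $t \ge 2$ times, i.e. $\boldsymbol{e}\cup\{v_1\},\dots,\boldsymbol{e}\cup\{v_t\}$ are all in $\mathcal{F}$ with distinct $v_i \in J$, then for each such $F_{k_i}$, the set $B_{k_i}$ must be a proper subset of $F_{k_i}$ of size $d-1$ sitting inside $[n]\setminus J$, hence $B_{k_i} \subseteq \boldsymbol{e}$, so $B_{k_i}$ is one of the $d$ codimension-one subsets of $\boldsymbol{e}$. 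Now use the defining property: for each $B_{k_i} \subsetneq \boldsymbol{e}$ there must exist $F \in \mathcal{F}$ with $F \cap F_{k_i} = B_{k_i}$; since $F_{k_i} = \boldsymbol{e}\cup\{v_i\}$, this $F$ contains $B_{k_i}$ but misses the remaining vertex of $\boldsymbol{e}$ and misses $v_i$. One then counts, via \cref{claim:Enumerate1}, how many $(d-1)$-sets $B$ have bounded multiplicity, and combines with \cref{fact:DenseHypergraph} to argue that the set of edges $\boldsymbol{e}$ with multiplicity $\ge 2$ in $\mathcal{T}_J^3$ — equivalently those $\boldsymbol{e}$ whose $(d-1)$-subsets are ``light'' in $\mathcal{G}_J$ or which force many distinct $B$'s — is small, of size $O(K)$. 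A cleaner route: the number of \emph{excess} pairs $(F_k, \boldsymbol{e})$ with $F_k \in \mathcal{T}_J^3$, $\phi(F_k)=\boldsymbol{e}$, beyond one representative per edge, is bounded by $\sum_{\boldsymbol{e}}(\text{mult}(\boldsymbol{e})-1)^+$; each unit of excess at $\boldsymbol{e}$ produces (via the $B_k$-property) a distinct witness set, and an Erdős–Ko–Rado / Kruskal–Katona type counting bounds this by $O(K + \sqrt{Kn^{d-1}} + n^{d-2})$. The $\sqrt{Kn^{d-1}}$ term is the tell-tale sign of a Cauchy–Schwarz step: writing the excess as $\sum_{\boldsymbol{e}} \text{mult}(\boldsymbol{e}) - |\text{image}|$ and bounding $\sum \text{mult}(\boldsymbol{e})^2$ by (number of light edges)$\times |J|^2 + $ (contribution of heavy edges), then applying Cauchy–Schwarz against $|E(\mathcal{G}_J)| \ge \binom{n-|J|}{d} - Kn$, should produce exactly this term. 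I expect this Cauchy–Schwarz bookkeeping, together with correctly threading \cref{fact:DenseHypergraph} to control the light $(d-1)$-shadows, to be the delicate part; the injectivity/disjointness statements themselves are immediate from \cref{claim:Injective}-style arguments.
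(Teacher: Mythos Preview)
Your setup is right and matches the paper: the map $\phi$ into $\binom{[n]\setminus J}{d}$, injectivity on $\mathcal{T}_J^2$, and disjointness of the images of $\mathcal{T}_J^2$ and $\mathcal{T}_J^3$ are exactly what the paper does. But from there the proposal leaves a real gap.

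First, a small point you nearly have but do not use: you observe that for $F_k \in \phi^{-1}(\boldsymbol{e}) \cap \mathcal{T}_J^3$ the set $B_k$ is one of the $d$ codimension-one subsets of $\boldsymbol{e}$, and you mention \cref{claim:Enumerate1}. Combining these gives $|\phi^{-1}(\boldsymbol{e})| \le d\cdot C_{\eqref{thm:BVCSunflower}} = O_d(1)$ directly --- the multiplicity is a constant, not $|J|$. So the task reduces to bounding $|\mathcal{B}_L| := |\{\boldsymbol{e}: |\phi^{-1}(\boldsymbol{e})| \ge 2\}|$, not a weighted sum.

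Second, there is an actual error: you write that ``for each $B_{k_i}$ there must exist $F \in \mathcal{F}$ with $F \cap F_{k_i} = B_{k_i}$''. The defining property of $B_k$ says exactly the opposite --- \emph{no} $F \in \mathcal{F}$ satisfies this --- so the ``witness'' you try to build does not exist.

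Third, and this is the main gap: the $\sqrt{Kn^{d-1}}$ term does not come from a Cauchy--Schwarz over the $d$-sets $\boldsymbol{e}$ weighted by $\text{mult}(\boldsymbol{e})$ as you sketch; that goes in circles once you know $\text{mult}\le O_d(1)$. The paper's argument, which your proposal misses, is a case split on a pair $F_k = \boldsymbol{e}\cup\{j\}$, $F_\ell = \boldsymbol{e}\cup\{j'\}$ in $\phi^{-1}(\boldsymbol{e})$. If $B_k = B_\ell$, one shows $\boldsymbol{e}$ is the \emph{only} edge of $\mathcal{G}_J$ containing $B_k$, so \cref{fact:DenseHypergraph} gives $O(K)$ such $\boldsymbol{e}$. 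If $B_k \ne B_\ell$, one passes to the $(d-2)$-set $E = B_k \cap B_\ell$ (the ``core'') and writes $\boldsymbol{e} = E\cup\{v,w\}$. A delicate intersection analysis then shows that (a) two $\boldsymbol{e}$'s with the same core have disjoint $\{v,w\}$-parts, and (b) any two of them force a \emph{missing} edge of $\mathcal{G}_J$ through $E$. Hence $\binom{N(E)}{2}$ is at most the number of non-edges of $\mathcal{G}_J$ through $E$; summing over $E \in \binom{[n]\setminus J}{d-2}$ gives $\sum_E \binom{N(E)}{2} = O(Kn)$, and Cauchy--Schwarz over the $O(n^{d-2})$ cores yields $\sum_E N(E) = O(\sqrt{Kn^{d-1}} + n^{d-2})$. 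Without this core structure there is no visible path from ``$\mathcal{G}_J$ misses $Kn$ edges'' to the required bound on $|\mathcal{B}_L|$.
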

\begin{poc}
   Recall that $\mathcal{T}_{J}^{2}$ consists of the sets $F_{k}\in\mathcal{F}$ with $B_{k}\in\binom{[n]\setminus J}{d}$ and $\mathcal{T}_{J}^{3}$ consists of the sets $F_{k}\in\mathcal{F}$ with $|F_{k}\cap J|=1$ and $B_{k}\in\binom{[n]\setminus J}{d-1}$. We define a map $\phi:\mathcal{T}_{J}^{2}\cup\mathcal{T}_{J}^{3}\rightarrow\binom{[n]\setminus J}{d}$ as 
\[
\phi(F_k) =
\begin{cases}
B_k, & \text{if } F_k \in \mathcal{T}_{J}^2, \\
F_k \setminus J, & \text{if } F_k \in \mathcal{T}_{J}^3.
\end{cases}
\]
By definition of $\phi$, we have
\begin{equation*}
|\mathcal{T}_{J}^{2}\cup\mathcal{T}_{J}^{3}|=\sum\limits_{B\in\binom{[n]\setminus J}{d}}|\phi^{-1}(B)|\le \binom{n-|J|}{d}+\sum\limits_{B\in\binom{[n]\setminus J}{d},|\phi^{-1}(B)|\ge 2}|\phi^{-1}(B)|.
\end{equation*}
Define \[ \mathcal{B}_{L} := \{B \in \binom{[n] \setminus J}{d} : |\phi^{-1}(B)| \geq 2\}. \]  
For any \( B \in \mathcal{B}_{L} \), if \( \phi^{-1}(B) \) contains a set \( F_{k} \in \mathcal{T}_{J}^{2} \), then by the definition of \( B_{k} \), no other set in \( \mathcal{F} \) can contain \( B_{k} \), implying \( |\phi^{-1}(B)| = 1 \), which is a contradiction. Hence, for any \( B \in \mathcal{B}_{L} \), \( \phi^{-1}(B) \) can only contain sets from \( \mathcal{T}_{J}^{3} \).

For each \( B \in \mathcal{B}_{L} \), if \( F_{k} \in \phi^{-1}(B) \subseteq \mathcal{T}_{J}^{3} \), then \( B_{k} \subseteq B \). Consequently, the number of such \( B_{k} \) is at most \( d \). Furthermore, by \cref{claim:Enumerate1}, each such \( B_{k} \) corresponds to at most \( C_{\eqref{thm:BVCSunflower}} \) distinct sets in \( \mathcal{F} \). Thus, we have \( |\phi^{-1}(B)| \leq c_{7} \), where \( c_{7}=C_{\eqref{thm:BVCSunflower}}d \). Combining these observations, we conclude that  
\[
|\mathcal{T}_{J}^{2} \cup \mathcal{T}_{J}^{3}| \leq \binom{n - |J|}{d} + c_{7} \cdot |\mathcal{B}_{L}|.
\]

Next, it remains to derive an upper bound for \( |\mathcal{B}_{L}| \). Let \( F_{k} = B \cup \{j\} \) and \( F_{\ell} = B \cup \{j'\} \) be two sets in \( \phi^{-1}(B) \), where $j,j'\in J$. We then consider the following two cases.

\begin{Case}
\item Suppose \( B_{\ell} = B_{k} \). In this case, \( B \) is the only edge in the hypergraph \( \mathcal{G}_{J} \) that contains \( B_{k} = B_{\ell} \); otherwise, there would exist some \( F \in \mathcal{F} \) such that \( F \cap B = B_k = B_\ell \) and \( \abs{F \cap J} = 1 \). Thus we have either \( F \cap F_{k} = B_{k} \) or \( F \cap F_{\ell} = B_{\ell} \), which leads to a contradiction. Therefore, by \cref{fact:DenseHypergraph}, the number of such \( B_{k} \) is at most \( C_{5}K \). 

Moreover, since \( B \) is an edge in \( \mathcal{G}_{J} \) by definition, it is uniquely determined by \( B_{k} = B_{\ell} \). Thus, we conclude that the number of such \( B \in \mathcal{B}_L \) is also at most \( C_{5}K \).

\item Suppose that \( B_\ell \neq B_k \). Let \( E = B_k \cap B_\ell \). Since \( |B_k| = |B_\ell| = d-1 \), it follows that \( |E| = d-2 \). We denote \( B_k = E \cup \{v\} \) and \( B_\ell = E \cup \{w\} \) for some elements \( v, w \in [n] \). We refer to \( E \) as the \emph{core} of \( B \), and we say that the 4-tuple \( (v, w, j, j') \) \emph{corresponds to} this \( B \).

Observe that the set $B=E\cup\{v,w\}$ is uniquely determined by the core $E$ and the corresponding $4$-tuple. For a fixed $E\in\binom{[n]}{d-2}$, we define $N(E)$ to be the number of $B$'s in this case with respect to the core $E$. 

Now let $B'$ be any other set that falls into this case with respect to the same core $E$. First, we claim that $B'\cap B=E$. Suppose not, then assume that $B'=E\cup\{v,w'\}$ for some $w'\neq w\in [n]$ with the corresponding $4$-tuple being $(v,w',h,h')$ for some $h,h'\in J$. Then both of $F_{k}=E\cup\{v,w,j\}$ and $F_{h}=E\cup\{v,w',h\}$ belong to $\mathcal{F}$, therefore we have 
\[(E\cup\{v,w,j\})\cap (E\cup\{v,w',h\})\neq B_{k}=E\cup\{v\},\]
which yields that $j=h$. Similarly, we can obtain that
\[(E\cup\{v,w,j\})\cap (E\cup\{v,w',h'\})\neq B_{k}=E\cup\{v\},\]
which leads to $j=h'$, a contradiction.

We can then assume \( B' = E \cup \{v', w'\} \), where \( \{v', w'\} \cap \{v, w\} = \emptyset \), with the corresponding 4-tuple \( (v', w', h, h') \). Our next goal is to show that there exists at least one pair \( M \subseteq \{v, w\} \times \{v', w'\} \) such that \( M \cup E \notin E(\mathcal{G}_J) \).

Without loss of generality, we may assume \( j \neq h \). We claim that \( \{v, v'\} \cup E \) is not an edge in the hypergraph \( \mathcal{G}_J \). Suppose otherwise, then there exists some \( F_q = \{v, v', q\} \cup E \in \mathcal{F} \) with \( q \in J \). Since 
\[
(\{v, v', q\} \cup E) \cap (\{v, w, j\} \cup E) \neq B_k = E \cup \{v\},
\]
we deduce \( q = j \). Similarly, we have 
\[
(E \cup \{v, v', q\}) \cap (E \cup \{v', w', h\}) \neq E \cup \{v'\},
\]
which implies \( q = h \), leading to a contradiction.

If $(v_{1},w_{1},*,*),\ldots,(v_{t},w_{t},*,*)$ are the $4$-tuples corresponding to the $B$'s that fall into \textbf{Case 2} with respect to the core $E$, then $\{v_{1},w_{1}\},\ldots,\{v_{t},w_{t}\}$ are pairwise disjoint. Moreover, for each pair $\{i,j\}\in\binom{[t]}{2}$, there is at least one pair $M\in\{v_{i},w_{i}\}\times\{v_{j},w_{j}\}$ such that $M\cup E\in\binom{[n]\setminus J}{d}\setminus E(\mathcal{G}_{J})$. Letting $N(E) = t$, we then have
\[\binom{N(E)}{2}\le |\{A\in\binom{[n]\setminus J}{d}\setminus E(\mathcal{G}_{J}):E\subseteq A\}|.\]
Summing over all possible $(d-2)$-subsets $E$, we then have
\[\sum\limits_{E\in\binom{[n]\setminus J}{d-2}}\binom{N(E)}{2}\le c_{8}Kn,\]
for some constant $c_{8}>0$ which only depends on $d$. Therefore we have
\[\sum\limits_{E\in\binom{[n]\setminus J}{d-2}}N(E)^{2}\le\sum\limits_{E\in\binom{[n]\setminus J}{d-2}}\left(4\binom{N(E)}{2}+1\right)\le 4c_{8}Kn+n^{d-2},\]
where we take advantage of $x^{2} \leq 4\binom{x}{2}+1$. Then by the Cauchy-Schwarz inequality, we have
\[\sum\limits_{E\in\binom{[n]\setminus J}{d-2}}N(E)\le c_{9}\left(\sqrt{Kn^{d-1}}+n^{d-2}\right),\]
for some constant $c_{9}>0$ which only depends on $d$. Therefore, the total number of distinct $B$'s falling into this case is at most $c_{9}\left(\sqrt{Kn^{d-1}}+n^{d-2}\right)$.
\end{Case}
Combining these cases, we conclude that \[|\mathcal{B}_{L}|\le C_{7}\left(K+\sqrt{Kn^{d-1}}+n^{d-2}\right),\]
where $C_{7}=c_{7}\cdot \max\{c_{9},C_{5}\}$. This finishes the proof.
\end{poc}

\begin{claim}\label{claim:T4T5}
    $|\mathcal{T}_{J}^{4}\cup\mathcal{T}_{J}^{5}|\le (|J|-1)\binom{n-|J|}{d-1}+2C_{5}K$.
\end{claim}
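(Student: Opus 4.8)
The plan is to partition $\mathcal{T}_J^4\cup\mathcal{T}_J^5$ according to a distinguished $(d-1)$-set $T\subseteq[n]\setminus J$ attached to each member, to bound each block by $|J|-1$, and to show that the exceptional blocks correspond to $(d-1)$-sets lying in very few edges of $\mathcal{G}_J$, which are controlled by \cref{fact:DenseHypergraph}. For $T\in\binom{[n]\setminus J}{d-1}$ set
\[
\mathcal{A}_T:=\{F_k\in\mathcal{T}_J^4: B_k\setminus J=T\},\qquad \mathcal{B}_T:=\{F_k\in\mathcal{T}_J^5: F_k\setminus J=T\}.
\]
Each $F_k\in\mathcal{T}_J^4$ has $|F_k\cap J|=1$ and $B_k=\{j\}\cup T$ for a unique $j\in J$ with $T=B_k\setminus J\in\binom{[n]\setminus J}{d-1}$, and each $F_k\in\mathcal{T}_J^5$ has $|F_k\cap J|=2$ with $F_k\setminus J\in\binom{[n]\setminus J}{d-1}$; so these families partition $\mathcal{T}_J^4$ and $\mathcal{T}_J^5$ respectively over all $T$, and it suffices to prove $|\mathcal{A}_T|+|\mathcal{B}_T|\le |J|-1$ for every $T$, with at most $C_5K$ exceptions for which the weaker bound $|\mathcal{A}_T|+|\mathcal{B}_T|\le |J|$ still holds. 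I first record the elementary facts forced by the maximality of the $B_k$'s. Any $F_k\in\mathcal{A}_T$ has the form $\{j_k\}\cup T\cup\{x_k\}$ with $B_k=\{j_k\}\cup T$, $j_k\in J$, $x_k\in[n]\setminus(J\cup T)$; if $F_k,F_\ell\in\mathcal{A}_T$ share the same $j_k=j_\ell$ then $x_k\neq x_\ell$ would give $F_k\cap F_\ell=B_k$, contradicting the choice of $B_k$, so $F_k\mapsto j_k$ is injective on $\mathcal{A}_T$ and $|\mathcal{A}_T|=|J_T^{(4)}|$ with $J_T^{(4)}:=\{j_k:F_k\in\mathcal{A}_T\}\subseteq J$. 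Any $F_k\in\mathcal{B}_T$ has the form $\{j_1,j_2\}\cup T$ with $B_k\in\{T,\,T\cup\{j_1\},\,T\cup\{j_2\}\}$, and $F_k\mapsto\{j_1,j_2\}$ is injective on $\mathcal{B}_T$; let $P_T$ be the resulting graph of pairs on vertex set $J_T^{(5)}:=\bigcup_{F_k\in\mathcal{B}_T}\{j_1,j_2\}$, so $|\mathcal{B}_T|=|E(P_T)|$. Intersecting $F_k\in\mathcal{A}_T$ with $F_\ell\in\mathcal{B}_T$ and invoking the hole $B_k=\{j_k\}\cup T$ forces $j_k\notin\{j_1^\ell,j_2^\ell\}$, hence $J_T^{(4)}\cap J_T^{(5)}=\emptyset$ and $|\mathcal{A}_T|\le |J|-|V(P_T)|$.

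Next I would analyse $P_T$. Call an edge of $P_T$ of \emph{type (a)} if the associated $B_k$ equals $T$, and of \emph{type (b)} otherwise, say $B_k=T\cup\{p\}$ for an endpoint $p$. Comparing a type-(b) set with any other member of $\mathcal{A}_T\cup\mathcal{B}_T$ and using the hole $T\cup\{p\}$ shows that $p$ lies in no other edge of $P_T$ and avoids $J_T^{(4)}$; comparing two members one of which has $B_k=T$ forces their $J$-pairs to intersect, so the type-(a) edges pairwise intersect and meet every edge of $P_T$. Consequently every cycle of $P_T$ consists of type-(a) edges, and since a pairwise-intersecting family of $2$-sets is a star or a single triangle, either $P_T$ is a forest or its type-(a) edges form exactly a triangle. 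Letting $t$ be the number of type-(b) edges (equivalently, of private endpoints $p$), $r:=|V(P_T)|-t$ the number of remaining (``core'') vertices, and $q$ the number of type-(a) edges, one has $|\mathcal{B}_T|=q+t$, $|V(P_T)|=r+t$, and therefore $|\mathcal{A}_T|+|\mathcal{B}_T|\le |J|-r+q$. Since the type-(a) edges lie among the $r$ core vertices and form a star (so $q\le r-1$) or a triangle (so $q=3$, $r\ge 3$), this is at most $|J|-1$ \emph{unless} either the type-(a) edges of $P_T$ form a triangle or $\mathcal{B}_T=\emptyset$ and $|\mathcal{A}_T|=|J|$; in both exceptional cases the bound still gives $|\mathcal{A}_T|+|\mathcal{B}_T|\le |J|$.

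It then remains to show that each exceptional $T$ lies in at most $|J|$ edges of $\mathcal{G}_J$, hence belongs to the family $\mathcal{S}$ from \cref{fact:DenseHypergraph}. If the type-(a) edges form a triangle on $\{v_1,v_2,v_3\}\subseteq J$, then $\{v_i,v_j\}\cup T\in\mathcal{F}$ with hole $T$ for each pair; any edge $T\cup\{x\}$ of $\mathcal{G}_J$ comes from some $F=T\cup\{x,w\}\in\mathcal{F}$ with $w\in J$, and avoiding the three holes forces $w\in\{v_1,v_2\}\cap\{v_1,v_3\}\cap\{v_2,v_3\}=\emptyset$, so $T$ lies in no edge of $\mathcal{G}_J$. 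If instead $|\mathcal{A}_T|=|J|$, then for each $w\in J$ there is a set $\{w\}\cup T\cup\{x_w\}\in\mathcal{F}$ with hole $\{w\}\cup T$, and the same avoidance argument shows that any edge $T\cup\{x\}$ of $\mathcal{G}_J$ must have $x=x_w$ for its witnessing $w$, so $T$ lies in at most $|J|$ edges of $\mathcal{G}_J$. As $|J|\le 100d+|J|$, every exceptional $T$ belongs to $\mathcal{S}$, so there are at most $|\mathcal{S}|\le C_5K$ of them, and summing over all $T\in\binom{[n]\setminus J}{d-1}$ yields
\[
|\mathcal{T}_J^4\cup\mathcal{T}_J^5|\ \le\ (|J|-1)\binom{n-|J|}{d-1}+|\mathcal{S}|\ \le\ (|J|-1)\binom{n-|J|}{d-1}+2C_5K .
\]

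The main obstacle is the combinatorial analysis of the auxiliary graph $P_T$ together with the bookkeeping of the two exceptional cases: one has to recognise that the $|J|-1$ bound can fail only through a type-(a) triangle (or through $\mathcal{A}_T$ exhausting $J$), and that such a configuration is incompatible with $T$ being contained in more than $|J|$ edges of $\mathcal{G}_J$. The remaining ingredients — the injectivity statements, the disjointness $J_T^{(4)}\cap J_T^{(5)}=\emptyset$, and the edge-avoidance computations — are routine consequences of the maximality of the sets $B_k$.
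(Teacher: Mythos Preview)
Your proof is correct and follows essentially the same route as the paper: partition by the $(d-1)$-set $C=T\in\binom{[n]\setminus J}{d-1}$, show each block has size at most $|J|-1$ apart from two exceptional configurations (the type-(a) edges forming a triangle, or $\mathcal{B}_T=\emptyset$ with $|\mathcal{A}_T|=|J|$), and control the exceptional $T$'s via \cref{fact:DenseHypergraph}. The only difference is cosmetic: the paper packages the non-exceptional bound by defining a single map $\phi_C:\mathcal{T}_{J,C}^4\cup\mathcal{T}_{J,C}^5\to J$, proving it injective via \cref{claim:Injective}, and then arguing it misses a vertex of $J$ unless $G_C^5$ is empty or a triangle; you instead bound $|\mathcal{A}_T|+|\mathcal{B}_T|\le|J|-r+q$ directly and run the forest/triangle dichotomy on the larger graph $P_T$. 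Your version even yields the slightly sharper $C_5K$ in place of $2C_5K$, since both exceptional types land in the same set $\mathcal{S}$.
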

\begin{poc}
    Recall that $\mathcal{T}_{J}^{4}$ consists of the sets $F_{k}\in\mathcal{F}$ with $|F_{k}\cap J|=1$ and $B_{k}\in J\times \binom{[n]\setminus J}{d-1}$. and $\mathcal{T}_{J}^{5}$ consists of the sets $F_{k}\in\mathcal{F}$ with $|F_{k}\cap J|=2$ and $F_{k}\cap ([n]\setminus J)\subseteq B_{k}$. For each $C \in \binom{[n] - J}{d - 1}$, let \[\mathcal{T}_{J,C}^{4}:=\{F_{k}\in\mathcal{T}_{J}^{4}:C\subseteq B_{k}\},\] and
    \[\mathcal{T}_{J,C}^{5}:=\{F_{k}\in\mathcal{T}_{J}^{5}:C\subseteq F_{k}\}.\] By definition, each element of $\cT_J^4 \cup \cT_J^5$ lies in $\cT_{J, C}^4 \cup \cT_{J, C}^5$ for exactly one $C$. So we have
    $$\abs{\cT_J^4 \cup \cT_J^5}  = \sum_{C \in \binom{[n]\setminus J}{d - 1}} \abs{\cT_{J, C}^4 \cup \cT_{J, C}^5}.$$
   For each $C \in \binom{[n]\setminus J}{d - 1}$, define a map $\phi_C: \cT_{J, C}^4 \cup \cT_{J, C}^5 \to J$ as follows.
   \begin{itemize}
       \item For each set $F_k \in \cT_{J, C}^4$ and each set $F_{k} \in \cT_{J, C}^5$ with $\abs{B_k} = d$, \(\phi_{C}\) maps $F_k$ to the unique element in $B_k \setminus C$.
       \item It remains to map the sets \( F_k \) in \( \mathcal{T}_{J, C}^5 \) where \( B_k = C \). We define an auxiliary graph \(G^5_C \) with vertex set \( J \) and edges given by \( \{F_k \cap J : F_k \in \mathcal{T}_{J, C}^5, B_k = C\} \). In this graph, every pair of edges must intersect, meaning that the graph is either empty, a star, or a triangle. In each case, we can define \( \phi_C(F_k) \) such that it maps \( \{F_k : F_k \in \mathcal{T}_{J, C}^5, B_k = C\} \) injectively to a non-isolated vertex in \( G^5_C \), with \( \phi_C(F_k) \subseteq F_k \).
   \end{itemize}

   We apply \cref{claim:Injective} to the map \( \phi_C'(F) = \{\phi_C(F)\} \cup C \). Note that \( \phi_C' \) is injective if \( \phi_C \) is injective on \( \{F_k : F_k \in \mathcal{T}_{J, C}^5, B_k = C\} \), which holds by definition. Consequently, both \( \phi_C' \) and \( \phi_C \) are injective. Thus, we obtain \[
\abs{\mathcal{T}_J^4 \cup \mathcal{T}_J^5} \leq (\abs{J} - 1) \binom{n - \abs{J}}{d - 1} + |\{C : \phi_C \text{ is surjective}\}|.
\]

   If $G_C^5$ is a star, then $\phi_v$ must miss a non-isolated vertex, so it cannot be surjective. Therefore, $\phi_v$ is surjective only if $G_C^5$ is empty or a triangle. Next, we discuss these two cases separately.
\begin{Case}
    \item Suppose \( G_C^5 \) is empty. Then every set \( F_k \) in \( \mathcal{T}_{J, C}^4 \cup \mathcal{T}_{J, C}^5 \) satisfies \( \abs{B_k} = d \), and for each \( j \in J \), there exists some \( F_k \in \mathcal{T}_{J, C}^4 \cup \mathcal{T}_{J, C}^5 \) with \( B_k = C \cup \{j\} \). Now suppose \( \boldsymbol{e} \) is any edge in \( \mathcal{G}_J \) containing \( C \). Then there exists \( j_0 \in J \) such that \( \boldsymbol{e} \cup \{j_0\} \in \mathcal{F} \). Therefore, there exists \( F_k \in \mathcal{T}_{J, C}^4 \cup \mathcal{T}_{J, C}^5 \) with \( B_k = C \cup \{j_0\} \), and this is possible only if \( F_k = \boldsymbol{e} \cup \{j_0\} \). 

Thus, any edge in \( \mathcal{G}_J \) containing \( C \) is of the form \( F_k \cap ([n] \setminus J) \) for some \( F_k \in \mathcal{T}_{J, C}^4 \cup \mathcal{T}_{J, C}^5 \). In particular, there are at most \( \abs{J} \) such edges. Then, by~\cref{fact:DenseHypergraph}, the total number of such \( C \) is at most \( C_{5} K \).
    \item Suppose \( G_C^5 \) is a triangle with vertices \( \{a, b, c\} \). Then \( F_{k_1} = C \cup \{a, b\} \), \( F_{k_2} = C \cup \{b, c\} \), and \( F_{k_3} = C \cup \{a, c\} \) are all sets in \( \mathcal{F} \), where \( B_{k_i} = C \) holds for all \( i \in [3] \). By the definition of \( B_{k_i} \), there can be no other element of \( \mathcal{F} \) containing \( C \). In particular, this implies that no edge in \( \mathcal{G}_J \) can contain \( C \). 

Thus, by~\cref{fact:DenseHypergraph}, the total number of such \( C \) is at most \( C_5 K \).
\end{Case}
We conclude that
\[
|\{C : \phi_C \text{ is surjective}\}| \leq 2C_{5} K,
\]
 which leads to
\[
\abs{\mathcal{T}_J^4 \cup \mathcal{T}_J^5} \leq (\abs{J} - 1) \binom{n - \abs{J}}{d - 1} + \abs{\mathcal{S}} \leq (\abs{J} - 1) \binom{n - \abs{J}}{d - 1} + 2C_5 K.
\]
 This completes the proof.

\end{poc}

\begin{claim}\label{claim:T6}
    $|\mathcal{T}_{J}^{6}|\le C_{8}|J|^{2}n^{d-2}$ for some constant \(C_{8}>0\) which only depends on $d$.
\end{claim}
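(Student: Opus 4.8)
The plan is to bound $|\mathcal{T}_{J}^{6}|$ by classifying its members according to $|B_k|$ and $|B_k\cap J|$, counting how many sets can occur as such a $B_k$, and then multiplying by the constant $C_{\eqref{thm:BVCSunflower}}$ supplied by \cref{claim:Enumerate1} (which bounds the number of indices sharing a fixed value of $B_k$). First I would dispose of the sets $F_k\in\mathcal{T}_{J}^{6}$ with $|B_k|\le d-2$: by \cref{claim:Enumerate1} there are at most $C_{\eqref{thm:BVCSunflower}}\sum_{s=0}^{d-2}\binom{n}{s}=O(n^{d-2})$ of these, which is already within the target order (using $|J|\ge 1$). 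It then remains to treat $F_k\in\mathcal{T}_{J}^{6}$ with $|B_k|\in\{d-1,d\}$.

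The core is a structural claim proved by a short case analysis on $t:=|F_k\cap J|$: if $F_k\in\mathcal{T}_{J}^{6}$ and $|B_k|=d$ then $|B_k\cap J|\ge 2$, and if $F_k\in\mathcal{T}_{J}^{6}$ and $|B_k|=d-1$ then $|B_k\cap J|\ge 1$. For $t=0$, the definitions force $F_k\in\mathcal{T}_{J}^{1}$ when $|B_k|=d-1$ and $F_k\in\mathcal{T}_{J}^{2}$ when $|B_k|=d$, so this $t$ contributes nothing. For $t=1$, write $F_k\cap J=\{j\}$: if $|B_k|=d$ then $F_k\in\mathcal{T}_{J}^{2}$ (if $j\notin B_k$) or $F_k\in\mathcal{T}_{J}^{4}$ (if $j\in B_k$), while if $|B_k|=d-1$ then $F_k\in\mathcal{T}_{J}^{3}$ unless $j\in B_k$, i.e.\ unless $|B_k\cap J|=1$. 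For $t=2$, set $S:=F_k\cap([n]\setminus J)$; membership in $\mathcal{T}_{J}^{6}$ forces $S\not\subseteq B_k$ (otherwise $F_k\in\mathcal{T}_{J}^{5}$), and since $B_k$ omits exactly $d+1-|B_k|\le 2$ elements of $F_k$, at least one of them in $S$, the set $B_k$ retains at least $2-(d-|B_k|)$ of the two elements of $F_k\cap J$: that is, $2$ when $|B_k|=d$, and at least $1$ when $|B_k|=d-1$. For $t\ge 3$, $B_k$ omits at most $2$ elements of $F_k$, hence retains at least $t-2\ge 1$ elements of $J$, and at least $t-1\ge 2$ when $|B_k|=d$.

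Given the structural claim, every $F_k\in\mathcal{T}_{J}^{6}$ with $|B_k|\ge d-1$ has $B_k$ in one of two explicit families: the $d$-subsets $B\subseteq[n]$ with $|B\cap J|\ge 2$, numbering $\sum_{t=2}^{d}\binom{|J|}{t}\binom{n-|J|}{d-t}$, and the $(d-1)$-subsets $B$ with $|B\cap J|\ge 1$, numbering $\sum_{t=1}^{d-1}\binom{|J|}{t}\binom{n-|J|}{d-1-t}$. Invoking the standing assumption $s\le n/(4c_3)$, so $|J|\le c_3 s\le n/4$, each term $\binom{|J|}{t}\binom{n-|J|}{d-t}$ with $t\ge 2$ is at most $|J|^{t}n^{d-t}=|J|^{2}n^{d-2}(|J|/n)^{t-2}\le|J|^{2}n^{d-2}$, and each term of the second sum is likewise at most $|J|^{2}n^{d-2}$ (the $t=1$ term being $|J|n^{d-2}\le|J|^{2}n^{d-2}$). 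So both families have size $O(|J|^{2}n^{d-2})$; multiplying by $C_{\eqref{thm:BVCSunflower}}$ and adding the $O(n^{d-2})$ term from the first paragraph gives $|\mathcal{T}_{J}^{6}|\le C_{8}|J|^{2}n^{d-2}$ for a suitable $C_{8}=C_{8}(d)$.

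The step I expect to be the main obstacle is making the second-paragraph case analysis fully rigorous: one has to run through every pair $(t,|B_k|)$ with $|B_k|\in\{d-1,d\}$ and check that failing to lie in $\mathcal{T}_{J}^{1}\cup\cdots\cup\mathcal{T}_{J}^{5}$ really does force the asserted lower bound on $|B_k\cap J|$ — the subtlest instance being $t=2$, where it is precisely the defining condition of $\mathcal{T}_{J}^{5}$ (namely $F_k\cap([n]\setminus J)\subseteq B_k$) that must fail. Everything afterwards is routine binomial estimation, the one point needing attention being the use of $|J|\le n/4$ to keep the non-leading terms of the shadow counts below the target order $|J|^2 n^{d-2}$.
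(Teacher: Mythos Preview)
Your proposal is correct and follows essentially the same route as the paper. The paper establishes the single inequality $|B_k\cap([n]\setminus J)|\le d-2$ for every $F_k\in\mathcal{T}_J^6$ via a four-case analysis on $|F_k\cap J|$; your structural claim ($|B_k\cap J|\ge 2$ when $|B_k|=d$, and $|B_k\cap J|\ge 1$ when $|B_k|=d-1$) is exactly that inequality rewritten in the two relevant size regimes, and your case split on $t=|F_k\cap J|$ matches the paper's cases line for line. The only cosmetic difference is that you peel off the sets with $|B_k|\le d-2$ at the outset via \cref{claim:Enumerate1}, while the paper absorbs them into the same count (since $|B_k|\le d-2$ trivially gives $|B_k\cap([n]\setminus J)|\le d-2$); your final binomial estimation is also a bit more explicit than the paper's.
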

\begin{poc}
    We first claim that for each $F_{k}\in \mathcal{T}_{J}^{6}$, $|B_{k}\cap ([n]\setminus J)|\le d-2$. Suppose there is some $F_{k}\in\mathcal{T}_{J}^{6}$ such that $|B_{k}\cap ([n]\setminus J)|\ge d-1$. We consider the following cases.
    \begin{Case}
\item If $|F_{k}\cap J|=0$, then $F_{k}$ belongs to either $\mathcal{T}_{J}^{1}$ or $\mathcal{T}_{J}^{2}$, a contradiction.
\item  If $|F_{k}\cap J|=1$, then $F_{k}$ belongs to $\mathcal{T}_{J}^{2}$, $\mathcal{T}_{J}^{3}$ or $\mathcal{T}_{J}^{4}$, a contradiction.
\item If $|F_{k}\cap J|=2$, then $F_{k}$ belongs to $\mathcal{T}_{J}^{5}$, a contradiction.

\item If $|F_{k}\cap J|\geq 3$, then since $B_{k}\subseteq F_{k}$, we can see $|B_{k}\cap (F_{k}\setminus J)|\le d-2$, which yields that $|B_{k}\cap ([n]\setminus J)|\le d-2$, a contradiction to our assumption.
    \end{Case}
We observe that there are at most \( \binom{|J|}{2} \cdot \sum_{i=0}^{d-2} \binom{n}{i} \le c_{10} |J|^2 n^{d-2} \) distinct sets \( B_k \) such that \( |B_k \cap ([n] \setminus J)| \le d-2 \), where $c_{10}>0$ only depends on $d$. By~\cref{claim:Enumerate1}, there are at most \( C_{\eqref{thm:BVCSunflower}} \) sets \( F \in \mathcal{F} \) corresponding to each \( B_k \). Therefore, it follows that 
\[
|\mathcal{T}_J^6| \le C_{8} |J|^2 n^{d-2},
\]
where $C_{8}=c_{10}\cdot C_{\eqref{thm:BVCSunflower}}$.
\end{poc}

Combining \cref{claim:T1}, \cref{claim:T2T3}, \cref{claim:T4T5}, and \cref{claim:T6}, we obtain the following bound on \(|\mathcal{F}|\): 
\[
|\mathcal{F}| \leq \binom{n-|J|}{d}+(|J|-1)\binom{n-|J|}{d-1}+ C_{9}(K+\sqrt{Kn^{d-1}}+n^{d-2}+|J|^{2}n^{d-2}),
\]
where $C_{9}=\max\{2C_{5}+C_{6}+C_{7},C_{8}\}$. Recall that \[ K:=K(s) = C_{4}\left(s^{-\frac{1}{d-1}}n^{d-1}+n^{\frac{d^{2}-2d}{d-1}}+s^{2}n^{d-2}\right),\] and $|J|\le c_{3}\cdot s$, we then get
\[
\abs{\cF} \leq \binom{n - 1}{d} + C_{10}\left(s^2 n^{d - 2}+ s^{-\frac{1}{2(d - 1)}} n^{d - 1} + n^{\frac{2d^2 - 4d + 1}{2(d - 1)}}+ sn^{d - 3/2} + n^{d - 2}\right),
\]
where $C_{10}>0$ only depends on $d$.

Taking \( s = \lfloor n^{\frac{d-1}{2d-1}}\rfloor \), we obtain the bound
\[
|\mathcal{F}| \leq \binom{n-1}{d} + C_{d}\cdot n^{d - 1 - \frac{1}{4d-2}}
\]
for some constant \( C_{d} > 0 \). This finishes the proof of~\cref{thm:GeneralD}.

\section{Polynomial improvement via linear algebra methods}\label{sec:PolynomialMethod}
We further refine the tools from linear algebra methods to show that the Frankl--Pach upper bound can be improved by \(\Omega_d(n^{d-4})\). Although this result is weaker than our main finding, we believe it is of independent interest due to its potential applicability in providing the sharp result.
\begin{theorem}\label{thm:MainPoly}
    Let $d\ge 4$ be a positive integer and $n$ be sufficiently large compared to $d$. If $\mathcal{F}\subseteq\binom{[n]}{d+1}$ is a set system with VC-dimension at most $d$, then $|\mathcal{F}|\le \binom{n}{d}-c_{d}n^{d-4}$ for some constant $c_{d}>0$. 
\end{theorem}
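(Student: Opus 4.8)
I would prove \cref{thm:MainPoly} by refining the polynomial (linear-algebra) method of Frankl--Pach and of~\cite{2024FranklPach}, in contrast to the combinatorial argument of \cref{sec:main-proof}. Work over $\mathbb{R}$ with the space $V$ of multilinear polynomials in $x_1,\dots,x_n$ of degree at most $d$, regarded as functions on $\binom{[n]}{d+1}$ via characteristic vectors; the functions $b_S(F):=\mathbbm{1}[S\subseteq F]$ with $S\in\binom{[n]}{d}$ form a basis of $V$, so $\dim V=\binom{n}{d}$. For each $F\in\mathcal{F}$, take its maximum-size forbidden set $B_F\subsetneq F$ and let $g_F$ be $\prod_{k\in B_F}x_k\cdot\prod_{k\in F\setminus B_F}(1-x_k)$ with its unique degree-$(d+1)$ monomial removed; then $g_F\in V$, and since $F'\cap F\neq B_F$ for all $F'\in\mathcal{F}$ and $|B_F|\le d$, one checks $g_F(\mathbbm{1}_{F'})=\pm\,\mathbbm{1}[F=F']$ for every $F'\in\mathcal{F}$. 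Hence $\{g_F:F\in\mathcal{F}\}$ is linearly independent in $V$, which recovers $|\mathcal{F}|\le\binom{n}{d}$.

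The new ingredient is that, expanded in the basis $\{b_S\}$, the polynomial $g_F$ uses only coordinates $b_S$ with $B_F\subseteq S$; in particular $g_F=b_{B_F}$ when $|B_F|=d$. Hence $\operatorname{span}\{g_F:F\in\mathcal{F}\}\subseteq\operatorname{span}\{b_S:B_F\subseteq S\text{ for some }F\}$, and therefore
\[
|\mathcal{F}|=\dim\operatorname{span}\{g_F\}\le\bigl|\mathcal{C}\bigr|,\qquad \mathcal{C}:=\bigl\{S\in\tbinom{[n]}{d}:B_F\subseteq S\text{ for some }F\in\mathcal{F}\bigr\}.
\]
So it suffices to prove $\binom{n}{d}-|\mathcal{C}|\ge c_d n^{d-4}$: that at least $c_dn^{d-4}$ of the $d$-sets contain no forbidden set (we may assume $|\mathcal{F}|>\binom{n}{d}-c_dn^{d-4}$, as otherwise we are done). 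By \cref{claim:Enumerate1}, all but $O_d(n^{d-1})$ members of $\mathcal{F}$ have $|B_F|=d$, and for each such $F$ the set $B_F$ is a $d$-set contained in a unique member of $\mathcal{F}$ (``lonely''). Thus $\mathcal{C}$ is the union of a family of lonely $d$-sets together with the $d$-supersets of the $O_d(n^{d-1})$ forbidden sets of size at most $d-1$; the lonely part cannot by itself exhaust $\binom{[n]}{d}$ (that would make $\mathcal{F}$ a perfect $1$-cover of all $d$-sets, forcing $|\mathcal{F}|=\binom{n}{d}/(d+1)$), so the real obstruction is the upward $d$-shadow of the small forbidden sets.

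Bounding that upward shadow away from $\binom{n}{d}$ by $c_dn^{d-4}$ is the main obstacle, and the place where $\mathrm{VC}(\mathcal{F})\le d$ must be used beyond mere sunflower-freeness: a forbidden set of size $j$ has $\binom{n-j}{d-j}$ $d$-supersets, which is $\Theta(n^{d-3})$ already for $j\le3$, so a naive union bound over the $O_d(n^{d-1})$ small forbidden sets is useless, and a priori a forbidden set could be as small as $\emptyset$. I would split into the ranges $4\le j\le d-1$ and $j\le3$. For $4\le j\le d-1$, each of the $O_d(n^{j})$ forbidden sets of size $j$ contributes only $\binom{n-j}{d-j}=O_d(n^{d-4})$ $d$-supersets, and, using that distinct forbidden sets of the same size have bounded co-degree (again \cref{claim:Enumerate1}), an inclusion--exclusion keeps their total upward $d$-shadow below $\binom{n}{d}-c_dn^{d-4}$. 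For $j\le3$, one uses that by maximality every $d$-subset of such an $F$ is realized, so $F$ forces $d+1$ further members meeting it in exactly $d$ points; the plan is to feed this into a shadow/double-counting estimate on the links $\mathcal{X}_v$ (which have VC-dimension at most $d-1$ by \cref{claim:XvSize}), in the spirit of \cref{thm:PartialShadow} and \cref{thm:FG86}, and conclude that the members with $|B_F|\le3$ are too few and too rigid for their $d$-supersets to cover all but $o(n^{d-4})$ of the $d$-sets. This last localization near the tiny forbidden sets is the step I expect to be hardest, since it must rule out the degenerate scenario in which a handful of very small forbidden sets already cover essentially every $d$-set.
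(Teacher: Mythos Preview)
Your span observation is correct and clean: expanding each $g_F$ in the basis $\{b_S:|S|=d\}$ (via $b_T=\frac{1}{d+1-|T|}\sum_{S\supseteq T}b_S$ on $\binom{[n]}{d+1}$) indeed uses only coordinates $S\supseteq B_F$, so $|\mathcal{F}|\le|\mathcal{C}|$. The gap is the second half. A single $F$ with $|B_F|\le 3$ already puts $\binom{n-|B_F|}{d-|B_F|}\ge\Theta(n^{d-3})$ sets into $\mathcal{C}$, and if $B_F=\emptyset$ then $\mathcal{C}=\binom{[n]}{d}$ and your inequality is vacuous; nothing you wrote rules this out. Your handling of $j\le 3$ is a hope rather than an argument: maximality of $B_F$ gives $d+1$ neighbours of $F$, but neither the link bound of \cref{claim:XvSize} nor \cref{thm:PartialShadow} converts this into ``the $d$-supersets of these tiny $B_F$'s miss $c_dn^{d-4}$ sets''. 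The range $4\le j\le d-1$ is no better: \cref{claim:Enumerate1} bounds the \emph{multiplicity} of each forbidden set, not any co-degree between them, and $O_d(n^{j})$ sets of size $j$, each with $\Theta(n^{d-j})$ $d$-supersets, can in principle cover all of $\binom{[n]}{d}$; inclusion--exclusion alone does not prevent this.

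The paper sidesteps exactly this obstruction by a different linear-algebra manoeuvre. Instead of restricting to the subspace spanned by $\{b_S:S\in\mathcal{C}\}$, it \emph{inserts} $s$ extra polynomials $y_{Y_i}=-\prod_{j\in Z_i}x_j$ (for carefully chosen $(d+1)$-sets $Y_i\notin\mathcal{F}$ with $d$-subsets $Z_i\subseteq Y_i$) into the full family $\{f_F\}\cup\{h_H:H\in\binom{[n]}{\le d-1}\}$ and proves linear independence via a block-triangular matrix computation (\cref{claim:LI}). The crucial point is that condition~(1) there only requires $Z_i\neq B_j$, i.e.\ $Z_i$ must avoid \emph{equality} with a size-$d$ forbidden set; it may freely contain smaller $B_F$'s. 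Thus tiny forbidden sets no longer sabotage the choice of $Z_i$. The pairs $(Y_i,Z_i)$ are then produced by a greedy algorithm, and the count $s=\Omega_d(n^{d-4})$ comes from playing Kruskal--Katona against the double-counting estimate $|\{i:|B_i|=d\}|\le\binom{n-1}{d}$ of \cref{claim:IndexD}. Both ingredients---the weaker avoidance condition and \cref{claim:IndexD}---are what your subspace reduction to $|\mathcal{C}|$ lacks.
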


\subsection{Overview of the proof of~\cref{thm:MainPoly}}

We first briefly recap preceding linear algebra methods. Frankl and Pach associated a vector (forming the so-called ``higher-order inclusion matrix") $\boldsymbol{v}_F \in \mathbb{R}^{\binom{[n]}{d}}$ to each $F \in \cF$ and proved that these vectors are linearly independent. By dimension counting, we have $\abs{\cF} \leq \binom{n}{d}$. Mubayi and Zhao instead worked with $\mathbb{F}^{\binom{[n]}{d}}$ for some finite field $\mathbb{F}$. They inserted $\Omega_d(\log n)$ vectors into $\{\boldsymbol{v}_{F}: F \in \cF\}$ and proved that the resulting family is still linearly independent, thereby gaining $\Omega_d(\log n)$ over the Frankl--Pach bound. Their use of a finite field imposes a number-theoretic condition on $d$. Ge, Xu, Yip, Zhang and Zhao~\cite{2024FranklPach} took a different approach, associating carefully designed degree $d$ multilinear polynomials $f_F, g_G \in \mathbb{R}[x_1, \cdots, x_n]$ for each $F \in \cF$ and $G \in \cG = \binom{[n]}{\leq d - 1}$. The Frankl--Pach bound is recovered by showing that the family of polynomials $\{f_F: F \in \cF\} \sqcup \{g_G: G \in \cG\}$ is linearly independent. By inserting a single extra polynomial into the family, the authors in~\cite{2024FranklPach} improved the Frankl--Pach bound by $1$.

In this section, we refine the approach in~\cite{2024FranklPach} by showing that it is possible to $\Omega_d(n^{d - 4})$ polynomials into the family $\{f_F: F \in \cF\} \cup \{g_G: G \in \cG\}$ while preserving linear independence. Our main innovation is a novel sufficient condition for choosing these additional polynomials. Using combinatorial arguments, we then algorithmically construct a large family satisfying the desired condition.
\subsection{An estimate}
Let $\cF = \{F_1, \ldots, F_m\}$ and $B_1, \ldots, B_m$ be as in \cref{sec:main-proof}. In addition to \cref{claim:Enumerate1}, we use a different double-counting approach to obtain a tight estimate for the number of indices $i$ with $|B_{i}|=d$.  

\begin{claim}\label{claim:IndexD}
    The number of indices $i\in [m]$ with $|B_{i}|=d$ is at most $\binom{n-1}{d}$. Moreover, the equality holds if and only if $\{F_i: i \in [m], |B_i|=d\}$ is a star with maximum size.
\end{claim}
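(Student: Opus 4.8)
The plan is to set up a bipartite double-counting between the sets $F_i$ with $|B_i|=d$ and pairs consisting of such an $F_i$ together with the unique element $F_i\setminus B_i$. Write $\mathcal{D} := \{F_i : i\in[m],\ |B_i|=d\}$ and for each $F_i\in\mathcal{D}$ let $a_i := F_i\setminus B_i$ be the single element of $F_i$ outside $B_i$. The key structural observation is that the map $F_i\mapsto B_i$ is injective on $\mathcal{D}$: if $B_i=B_j$ with $i\neq j$ then, since $B_j\subseteq F_j$, we would get $F_i\cap F_j\supseteq B_i$, and because $|F_i\cap F_j|\le d<d+1$ this forces $F_i\cap F_j = B_i$, contradicting the defining property of $B_i$. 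Thus $|\mathcal{D}| = |\{B_i : F_i\in\mathcal{D}\}|$, and these $B_i$ form a $d$-uniform family $\mathcal{B}\subseteq\binom{[n]}{d}$.

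The next step is to bound $|\mathcal{B}|$ by $\binom{n-1}{d}$ via an Erd\H{o}s--Ko--Rado-type argument on $\mathcal{B}$, exploiting the maximality clause in the choice of $B_i$. I claim $\mathcal{B}$ is intersecting: this is essentially the content of \cref{claim:Injective} applied with $\phi = \mathrm{id}$ restricted to $\mathcal{D}$, but one can also argue directly. Suppose $B_i\cap B_j=\emptyset$ for two sets $F_i,F_j\in\mathcal{D}$. Since $|B_i|=|B_j|=d$ and both are subsets of $(d+1)$-sets, and since $a_i\notin B_i$, consider whether $a_i\in F_j$; a short case analysis using that $F_i\cap F_j\neq B_i$ and $F_i\cap F_j\neq B_j$ will produce a contradiction with the assumption that $B_i$ (resp.\ $B_j$) was chosen to have the largest possible cardinality. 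Once $\mathcal{B}$ is intersecting, the Erd\H{o}s--Ko--Rado theorem (valid since $n\ge 2(d+1)>2d$) gives $|\mathcal{B}|\le\binom{n-1}{d-1}$...

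— wait: this needs care. The bound $\binom{n-1}{d-1}$ is the EKR bound for a $d$-uniform intersecting family, which is smaller than the claimed $\binom{n-1}{d}$, so merely proving $\mathcal{B}$ is intersecting would prove \emph{more} than claimed and is likely false. The correct step must instead be weaker: $\mathcal{B}$ need not be intersecting, but the \emph{complements} $\overline{B_i}:=F_i\setminus B_i$... no, those are singletons. The right reformulation is that the family $\mathcal{D}$ itself is a set system in which each member has a \emph{certified non-neighbor}, so the link/shadow structure is what to exploit: apply the argument of \cref{claim:XvSize} pointwise. Concretely, fix $v\in[n]$ and consider $\{F_i\in\mathcal{D}: v\in B_i\}$; removing $v$ yields a $(d-1)$-uniform family on $[n]\setminus\{v\}$ with VC-dimension $\le d-2$, hence of size $\le\binom{n-1}{d-1}$ by \cref{thm:FranklPach}, but this again overcounts. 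The cleanest route, which I expect to be the intended one, is to observe that $\partial_d$ of the $(d+1)$-uniform family $\mathcal{D}$ relates to $\mathcal{B}$: each $F_i\in\mathcal{D}$ has $B_i\in\binom{F_i}{d}\setminus\{B_j : j\neq i\}$, i.e. $\{B_i\}$ is a \emph{system of distinct representatives} of the $d$-shadow, and one then runs a Bollob\'as set-pair / skew-version argument, or simply notes that $\{B_i : F_i \in \mathcal{D}\}$ together with the singletons $\{a_i\}$ satisfies a cross-intersecting condition forcing $|\mathcal{D}|\le\binom{n-1}{d}$ with equality analysis via \cref{thm:HiltonMilner}.

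The main obstacle, therefore, is identifying the precisely correct intermediate inequality: one wants an argument that yields exactly $\binom{n-1}{d}$ (the star bound for $(d+1)$-uniform intersecting families on $[n]$) and no less. The clean way is: show that $\mathcal{D}$ is itself an \emph{intersecting} family of $(d+1)$-sets. Indeed if $F_i,F_j\in\mathcal{D}$ were disjoint, then $B_i\subseteq F_i$ is disjoint from $F_j$, so $F_j\cap F_i=\emptyset\neq B_i$ trivially — that gives nothing. So instead show $\mathcal{D}$ intersecting by the maximality of $|B_i|$: if $F_i\cap F_j=\emptyset$, then $F_j$ already witnesses that $F\cap F_i\neq F_i\cap F_j=\emptyset$, and since no $F\in\mathcal{F}$ meets $F_i$ in $B_i$, one could enlarge $B_i$... this is the step requiring the most care and is where I'd focus the effort. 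Granting $\mathcal{D}$ is intersecting, the Erd\H{o}s--Ko--Rado theorem gives $|\mathcal{D}|\le\binom{n-1}{d}$ for $n\ge 2(d+1)$, and the equality case of EKR (all maximum $(d+1)$-uniform intersecting families on $n\ge 2d+3$ points are stars) delivers exactly the ``if and only if'' conclusion: equality holds iff $\mathcal{D}$ is a star of size $\binom{n-1}{d}$. I would write up the intersecting-family claim as a standalone sub-claim, prove it by contradiction using the maximal-cardinality selection rule for $B_i$, and then cite \cref{thm:HiltonMilner} (or the EKR equality case) to finish.
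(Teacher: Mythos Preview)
Your proposal has a genuine gap: the key sub-claim that $\mathcal{D}=\{F_i:|B_i|=d\}$ is an intersecting family is false in general. For a simple counterexample, take $d\ge 2$, let $F_1=\{1,\dots,d+1\}$ and $F_2=\{d+2,\dots,2d+2\}$ be disjoint, and set $\mathcal{F}=\{F_1,F_2\}$. Then $F_1\cap F_2=\emptyset$, so any $d$-subset $B_1\subseteq F_1$ satisfies $F_2\cap F_1=\emptyset\neq B_1$ and $F_1\cap F_1=F_1\neq B_1$; likewise for $B_2\subseteq F_2$. Thus $|B_1|=|B_2|=d$ and $\mathcal{D}=\{F_1,F_2\}$ is not intersecting. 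Note also that the ``maximal-cardinality'' rule for choosing $B_i$ gives you nothing here: when $|B_i|=d$ the maximality is automatic (a proper subset of a $(d+1)$-set has size at most $d$), so no additional structural constraint is imposed. Consequently the EKR/Hilton--Milner route cannot be salvaged by that rule.

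The paper's argument is entirely different and avoids any intersecting-family claim. It double-counts pairs $(i,Y)$ with $|B_i|=d$, $Y\in\binom{[n]}{d+1}\setminus\mathcal{F}$, and $B_i\subseteq Y$. The crucial observation (which you did not use) is that for every $x\in[n]\setminus F_i$ the set $B_i\cup\{x\}$ \emph{cannot} belong to $\mathcal{F}$, since otherwise $(B_i\cup\{x\})\cap F_i=B_i$. This gives $|P|\ge (n-d-1)\cdot D$, while trivially $|P|\le (d+1)\bigl(\binom{n}{d+1}-|\mathcal{F}|\bigr)$; combining with $D\le|\mathcal{F}|$ (the injectivity you correctly noted) yields $D\le\binom{n-1}{d}$. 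The equality case is handled not via Hilton--Milner but via the Kruskal--Katona theorem together with the F\"uredi--Griggs shadow-minimization result (\cref{thm:FG86}): equality forces $\partial_d(\binom{[n]}{d+1}\setminus\mathcal{F})=\{B_i\}$ to have minimum possible size, and cascading down to the $1$-shadow shows $\mathcal{F}$ is a star. Your brief allusion to a partial-shadow/Bollob\'as argument is in fact workable---the paper gives an alternative proof of the inequality via \cref{thm:PartialShadow}---but you did not develop it, and the cross-intersecting condition you wrote down for the pairs $(B_i,\{a_i\})$ does not hold.
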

\begin{poc}
Let $\mathcal{Y}=\binom{[n]}{d+1}\setminus\mathcal{F}$. Let $P:=\big\{(i,Y):i\in [m],|B_{i}|=d,B_{i}\subseteq Y,Y\in\mathcal{Y} \big\}$. On one hand, since for each $i\in [m]$ with $|B_{i}|=d$, and for each $x\in [n]\setminus F_{i}$, we have $B_{i}\cup\{x\}\notin \mathcal{F}$, which implies that $(i,B_{i})\in P$ if $|B_{i}|=d$. Set $D:=|\{i : |B_i| = d\}|$, we have 
    \[
|P| \geq (n-d-1) \cdot |D|.
\]
    On the other hand, for each $Y\in\mathcal{Y}$, $Y$ has $d+1$  subsets of size $d$. Moreover, by definition for each subset $Y'\subseteq Y$ of size $d$, there is at most one index $i\in [m]$ such that $B_{i}=Y'$. Therefore, we have
 \[
|P| \leq (d+1) \left( \binom{n}{d+1} - |\mathcal{F}| \right).
\]
Observe that for each distinct $F_{i},F_{j}\in\mathcal{F}$ with $|B_{i}|=|B_{j}|=d$, $B_{i}\neq B_{j}$, therefore $|D|\le |\mathcal{F}|$.
Combining the above inequalities, we have
\begin{equation}\label{eq:boundD}
|D|\le \frac{(d+1)\left( \binom{n}{d+1} - |\mathcal{F}| \right)}{n-d-1}\le \frac{(d+1)\left( \binom{n}{d+1} - |D| \right)}{n-d-1},
\end{equation}
which yields that
\begin{equation*}
    |D|\le\frac{d+1}{n}\binom{n}{d+1}=\binom{n-1}{d}.
\end{equation*}

Furthermore, assume that \(\abs{\mathcal{F}} = \binom{n-1}{d}\) and $|B_i|=d$ for all $i\in [m]$. From the above proof, we deduce that for each \(Y \in \mathcal{Y}\) and each subset \(Y' \subseteq Y\) of size \(d\), there exists an index \(i\) such that \(B_i = Y'\). Therefore, we have \[\partial_d \mathcal{Y} = \{B_i : 1 \leq i \leq \abs{\mathcal{F}}\}.\]
Note that \(\abs{\mathcal{Y}} = \binom{n}{d+1} - \binom{n-1}{d} = \binom{n-1}{d+1}\), and \(\abs{\partial_d \mathcal{Y}} = \abs{\mathcal{F}} = \binom{n-1}{d}\). Hence, the \(d\)-shadow of \(\mathcal{Y}\) is minimized by \cref{thm:KK}. Applying \cref{thm:FG86}, we can conclude that the \(k\)-shadow of \(\mathcal{Y}\) is minimized for each \(k \leq d\). In particular, we have \(\abs{\partial_1 \mathcal{Y}} = n - 1\). In other words, \(\mathcal{F}\) is a star centered at \( [n] \setminus \partial_1 \mathcal{Y} \). This completes the proof.
\end{poc}


\subsection{Upper bound via the polynomial method}
We now develop a strategy that allows us to insert more polynomials into the family introduced in \cite{2024FranklPach}.

\begin{claim}\label{claim:LI}
    If there exists a family $\mathcal{Y}:=\{Y_{1},Y_{2},\ldots,Y_{s}\}\subseteq \binom{[n]}{d+1}\setminus\mathcal{F}$, together with $Z_{i}\subseteq Y_{i}$ of size $d$, such that the following three conditions hold:
    \begin{enumerate}
        \item[\textup{(1)}] For any $i\in [s]$ and $j\in [m]$, $Z_{i}\neq B_{j}$. 
        \item[\textup{(2)}] For any distinct $i,j\in [s]$, $|Y_{i}\cap Y_{j}|\le d-1$.
        \item[\textup{(3)}] For any $i,j\in [s]$ with $i<j$, there is no $k\in [m]$ such that both of $F_{k}\cap Y_{i}=Z_{i}$ and $F_{k}\cap Y_{j}=B_{k}$ hold.
    \end{enumerate}
    Then we have $|\mathcal{F}|\le\binom{n}{d}-s$.
\end{claim}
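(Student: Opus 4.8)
The plan is to recall the polynomial setup from~\cite{2024FranklPach} and then show that the family of polynomials associated to $\mathcal{F}$ and $\mathcal{G} = \binom{[n]}{\le d-1}$, together with $s$ new polynomials associated to the pairs $(Y_i, Z_i)$, remains linearly independent in the space of multilinear polynomials of degree at most $d$ in $n$ variables; since that space has dimension $\binom{n}{\le d} = \binom{n}{d} + |\mathcal{G}|$, a dimension count then yields $|\mathcal{F}| + |\mathcal{G}| + s \le \binom{n}{d} + |\mathcal{G}|$, i.e. $|\mathcal{F}| \le \binom{n}{d} - s$. Concretely, for each $F \in \mathcal{F}$ one uses the polynomial $f_F$ built from the pair $(F, B_F)$ (vanishing on indicator vectors of sets meeting $F$ in $B_F$ but not at $\mathbbm{1}_F$ itself), for each $G \in \mathcal{G}$ the standard monomial-type polynomial $g_G$, and for each $i \in [s]$ a polynomial $h_i$ built in the same way from the pair $(Y_i, Z_i)$ — this makes sense precisely because $Y_i \notin \mathcal{F}$, so $(Y_i, Z_i)$ plays the role of a "virtual" member of $\mathcal{F}$ with designated forbidden intersection $Z_i$.

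The key step is to verify that the combined family $\{f_F : F \in \mathcal{F}\} \sqcup \{g_G : G \in \mathcal{G}\} \sqcup \{h_i : i \in [s]\}$ is linearly independent. Following~\cite{2024FranklPach}, one evaluates a hypothetical linear dependence at carefully chosen indicator vectors $\mathbbm{1}_{S}$ and uses a triangularity argument: ordering the members of $\mathcal{F}$ and the $Y_i$'s appropriately, one shows that evaluating at $\mathbbm{1}_{F}$ (respectively $\mathbbm{1}_{Y_i}$) kills all terms except the one indexed by $F$ (respectively $Y_i$), after the $g_G$-coefficients have already been forced to vanish by the standard argument on the shadow. The three hypotheses are exactly what is needed to make this triangularity work: condition (1) guarantees $h_i$ is not accidentally equal (up to the $g_G$'s) to any $f_F$ and that $\mathbbm{1}_{Y_i}$ is not a zero of any $f_F$ with $F \cap Y_i = B_F$ forced spuriously; condition (2) ensures $|Y_i \cap Y_j| \le d-1$ so that $h_j(\mathbbm{1}_{Y_i}) = 0$ for $j \ne i$, giving the triangular structure among the $h_i$'s; and condition (3) rules out the one remaining cross-interaction, namely that $\mathbbm{1}_{Y_i}$ could be a common zero pattern shared between $f_F$ and $h_j$ for $i<j$ in a way that breaks the elimination order. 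After these evaluations force every coefficient of an $f_F$ and an $h_i$ to vanish, what remains is a dependence among the $g_G$'s alone, which is impossible since those are linearly independent (they have distinct leading monomials).

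The main obstacle I anticipate is bookkeeping the exact form of $f_F$, $g_G$ and the evaluation order from~\cite{2024FranklPach} and checking that hypotheses (1)--(3) translate precisely into the non-degeneracy conditions that the triangularity argument demands — in particular making sure that the interaction between a new polynomial $h_i$ and an \emph{old} polynomial $f_F$ (as opposed to another new $h_j$) is fully controlled, since condition (3) is the subtle asymmetric hypothesis ($i<j$) designed for exactly this and its necessity is easy to overlook. A secondary point to be careful about is that $Z_i$ has size exactly $d$ (so the pair $(Y_i, Z_i)$ behaves like a member of $\mathcal{F}$ with a maximum-size forbidden set), which is implicit in the claim and should be used when defining $h_i$ so that its degree is at most $d$ and its leading behaviour matches that of the $f_F$'s.
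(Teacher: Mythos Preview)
Your approach is essentially the paper's. In particular, your polynomial $h_i$ ``built the same way as $f_F$ from $(Y_i,Z_i)$'' collapses, because $|Z_i|=d$, to $-\prod_{j\in Z_i}x_j$, which is exactly the paper's choice $y_{Y_i}$; the dimension count and the evaluation-at-indicator-vectors strategy are identical.

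The one place your write-up is off is the elimination mechanics. The $g_G$'s are not handled first: they vanish at every indicator of a $(d{+}1)$-set, so they simply drop out of the equations obtained from $\mathbbm{1}_{F_k}$ and $\mathbbm{1}_{Y_i}$ and are dealt with \emph{last}. More importantly, evaluating at $\mathbbm{1}_{F_k}$ does \emph{not} kill everything except the $f_{F_k}$-term: $h_i(\mathbbm{1}_{F_k})=-\mathbf{1}[Z_i\subseteq F_k]$ survives, and likewise $f_{F_k}(\mathbbm{1}_{Y_i})=\pm\mathbf{1}[F_k\cap Y_i=B_k]$ survives. So there is no single ordering giving a triangular system. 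What the paper does (and what your ``obstacle'' paragraph is circling) is a Schur-complement step: the $\mathbbm{1}_{F_k}$-evaluations express each $\beta_k$ linearly in the $\alpha_i$'s, and substituting into the $\mathbbm{1}_{Y_i}$-evaluations yields $(-I_s+T^{\top}R)\alpha=0$, where $(T^{\top}R)_{ij}$ counts (with sign) those $k$ with $F_k\cap Y_i=B_k$ and $F_k\cap Y_j=Z_j$. Condition~(1) kills the diagonal ($B_k=Z_i$ is forbidden), and condition~(3) with $j<i$ kills the entries below the diagonal, so $T^{\top}R$ is strictly upper triangular and $-I_s+T^{\top}R$ is invertible. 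That is precisely where the asymmetric hypothesis~(3) is used; condition~(2) is what makes the $h$-block equal to $-I_s$ in the first place.
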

\begin{poc}
We consider the following polynomials, introduced in \cite{2024FranklPach}.

\begin{enumerate}
    \item For each set $F_{i}\in\mathcal{F}$, let $f_{F_{i}}(\boldsymbol{x})=\prod\limits_{j\in B_{i}}x_{j}\cdot\prod\limits_{j\in F_{i}\setminus B_{i}}(x_{j}-1)-\prod\limits_{j\in F_{i}}x_{j}$.
    \item For each set $Y_{i}\in\mathcal{Y}$, let $y_{Y_{i}}(\boldsymbol{x})=-\prod\limits_{j\in Z_{i}}x_{j}$.
    \item Let $\mathcal{H}=\binom{[n]}{\le d-1}$. For each set $H\in\mathcal{H}$, let $h_{H}(\boldsymbol{x})=\sum\limits_{j\notin H}x_{j}\cdot\prod\limits_{j\in H}x_{j}+(|H|-d-1)\prod\limits_{j\in H}x_{j}$. 
    \end{enumerate}
    It then suffices to show that $\{y_{Y_{i}}\}_{Y_{i}\in\mathcal{Y}}$, $\{f_{F_{i}}\}_{F_{i}\in\mathcal{F}}$ and $\{h_{H}\}_{H\in\mathcal{H}}$ are linearly independent, which implies that $s+|\mathcal{F}|+\sum\limits_{i=0}^{d-1}\binom{n}{i}\le\sum\limits_{i=0}^{d}\binom{n}{i}$, therefore, $|\mathcal{F}|\le\binom{n}{d}-s$. Suppose there exist coefficients (that are not all zero) $\boldsymbol{\alpha}=(\alpha_{1},\alpha_{2},\ldots,\alpha_{s})$, $\boldsymbol{\beta}=(\beta_{1},\beta_{2},\ldots,\beta_{m})$, and $\boldsymbol{\gamma}=(\gamma_{1},\gamma_{2},\ldots,\gamma_{w})$, where $w=|\mathcal{H}|=\sum\limits_{i=0}^{d-1}\binom{n}{i}$, such that
    \begin{equation*}
        \sum\limits_{i\in [s]}\alpha_{i}y_{Y_{i}}+ \sum\limits_{j\in [m]}\beta_{j}f_{F_{j}}+\sum\limits_{k\in [w]}\gamma_{k}h_{H_{k}}=0.
    \end{equation*}
    For each $A\subseteq [n]$, we use $v_A \in \{0,1\}^n$ to denote its indicator vector.  We next substitute the indicator vector corresponding to each set in the families \(\mathcal{Y}\), \(\mathcal{F}\), and \(\mathcal{H}\) into the equation above.
    By the properties (1)--(3) and some direct calculations, we can see
        \begin{equation*}
\left(\begin{matrix}
-\boldsymbol{E}_{s} &\TT^\top & \boldsymbol{0} \\
\RR & -\boldsymbol{E}_{m} & \boldsymbol{0} \\
 * & * & \boldsymbol{A}
\end{matrix}\right)
\begin{bmatrix} \boldsymbol{\alpha} \\ \boldsymbol{\beta} \\  \boldsymbol{\gamma} \\ \end{bmatrix} 
= \boldsymbol{0},
\end{equation*}
where $\boldsymbol{E}_{s}$ and $\boldsymbol{E}_{m}$ are the $s\times s$ and $m\times m$ identity matrices respectively, and the $*$'s denote matrices that we do not compute explicitly. The upper left block is $-\boldsymbol{E}_{s}$ since $\abs{Y_i \cap Y_j} \neq Z_i$ for distinct $i$ and $j$ by condition (2). The matrices $\bT$ and $\bR$ are given by
$$\bT_{k, i} = f_{F_k}(\bv_{Y_i}) = (-1)^{d + 1 - \abs{B_k}}\cdot\boldsymbol{1}_{F_k \cap Y_i = B_k},$$
$$\bR_{k, i} = y_{Y_i}(\bv_{F_k}) = (-1)^{d + 1 - \abs{Z_i}}\cdot\boldsymbol{1}_{F_k \cap Y_i = Z_i}.$$
To obtain a contradiction, it suffices to show that the coefficient matrix
$$\left(\begin{matrix}
-\boldsymbol{E}_{s} &\TT^\top & \boldsymbol{0} \\
\RR & -\boldsymbol{E}_{m} & \boldsymbol{0} \\
 * & * & \boldsymbol{A}
\end{matrix}\right)$$
is invertible. Using elementary row operations, we can transform the above matrix into
$$\boldsymbol{M}=\left(\begin{matrix}
    -\boldsymbol{E}_{s}+ \bT^\top \bR & 0 & 0 \\
    \bR & -\boldsymbol{E}_{m} & 0 \\
    * & * & \bA
\end{matrix}\right),$$
where the entries of $\bT^\top \bR$ are given by
$$(\bT^\top \bR)_{i, j} = \sum_{k = 1}^{\abs{\mathcal{F}}} \bT_{k, i} \bR_{k, j} = \sum_{k: F_k \cap Y_i = B_k, F_k \cap Y_j = Z_j} (-1)^{\abs{B_k} - \abs{Z_j}}.$$
By conditions (1) and (3), the rightmost sum is empty when $i\ge j$. So $\bT^\top \bR$ is upper triangular with zero diagonal. Also, note that $A$ is a lower triangular matrix with nonzero diagonal entries by sorting the sets in $\mathcal{H}$ by size in non-decreasing order. Thus each diagonal block of $\boldsymbol{M}$ is invertible. The determinant of $\boldsymbol{M}$ is the product of the determinants of $-\boldsymbol{E}_{s}+\bT^\top \bR$, $-\boldsymbol{E}_{m}$ and $\boldsymbol{A}$, which are all non-zero. Therefore the transformed matrix $\boldsymbol{M}$ is invertible. Thus the original coefficient matrix is also invertible, as desired.
\end{poc}

\subsection{Finding the desired family}
Our final objective is to identify a family \(\mathcal{Y}\) that satisfies the three properties outlined in~\cref{claim:LI} and is as large as possible. We show that it is always possible to construct such a family with size at least \(\Omega_{d}(n^{d-4})\) when $\abs{\mathcal{F}}$ is close to $\binom{n}{d}$. 

Let $\mathcal{D}:=\{B_{i}:|B_{i}|=d\}$ and $t:=\binom{n}{d}-|\mathcal{F}|$. Set $\Gamma_{d}$ to be a large enough constant which only depends on $d$, we define 
\[
\mathcal{D}' := \left\{ B \in \binom{[n]}{d} : \text{there are at least } \Gamma_{d} \text{ indices } k \in [m] \text{ such that } B \subseteq F_{k} \text{ and } |B_{k}| = d - 2 \right\}.
\]
For each set $S\in\binom{[n]}{d+1}$, we define
\[N(S)=\{T\in\binom{[n]}{d+1}:|T\cap S|\ge d\}.\]
Obviously for each $S\in\binom{[n]}{d+1}$, we have $|N(S)|= (n-d-1)(d+1)+1$.

We select the suitable sets $Y_{1},Y_{2},\ldots,Y_{s}\in\binom{[n]}{d+1}$ together with $Z_{1},Z_{2},\ldots,Z_{s}\in\binom{[n]}{d}$ via the following algorithm.
\begin{enumerate}
    \item[Step 1:] Initialize \(\mathcal{Y} = \binom{[n]}{d + 1} \setminus \mathcal{F}\).  
    \item[Step 2:] For each \(i \in [m]\) with \(\abs{B_i} \leq d -3\), remove \(N(F_i)\) from \(\mathcal{Y}\).  
    \item[Step 3:] Repeat the following steps until termination:  
    \begin{enumerate}
        \item \textbf{Termination Check:} If \(\partial_d \mathcal{Y} \subseteq \mathcal{D}\cup\mathcal{D}'\), terminate.  
        \item \textbf{Find a Candidate:} Otherwise, there exists some \(Y \in \mathcal{Y}\) and \(Z \subseteq Y\), such that \(\abs{Z} = d\) and \(Z \notin \mathcal{D}\cup\mathcal{D}'\). Select such a pair \((Y, Z)\) and assign it to \((Y_i, Z_i)\).  
        \item \textbf{Update \(\mathcal{Y}\):} Remove \(N(Y_i)\) from \(\mathcal{Y}\).  
        \item \textbf{Further Removal:} For each \(k \in [m]\) such that \(F_k \cap Y_i = Z_i\), remove from \(\mathcal{Y}\) every \(Y' \in \mathcal{Y}\) satisfying \(Y' \cap F_k = B_k\).  
    \end{enumerate}
\end{enumerate}  
One can check that the sets $Y_1, \ldots, Y_s$ and corresponding $Z_{1},\ldots,Z_{s}$ produced by the above algorithm satisfy all requirements of \cref{claim:LI}. More precisely, condition (1) is guaranteed by step 3(b), condition (2) is guaranteed by step 3(c), and condition (3) is guaranteed by step 3(d). Applying~\cref{claim:LI}, we conclude that $s \leq t$.

Let $\mathcal{Y}^*$ denote the family $\mathcal{Y}$ at the end of the algorithm. We now analyze the evolution of \(\abs{\mathcal{Y}}\) throughout this algorithm and give a lower bound on $\abs{\mathcal{Y}^*}$. Initially, we have  
\[
\abs{\mathcal{Y}} = \binom{n}{d + 1} - \abs{\mathcal{F}}.
\]  

As mentioned above, \(\abs{N(F_k)} = (n-d-1)(d+1)+1=O_{d}(n)\). Furthermore, by~\cref{claim:Enumerate1}, we conclude that that in step 2, at most \(C_{d} \sum\limits_{i=0}^{d-3} \binom{n}{i} \cdot \big((n - d - 1)(d + 1) + 1\big) = O_{d}(n^{d-2})\) sets are removed from \(\mathcal{Y}\).

Next, we focus on step 3. In each iteration of step 3:  
\begin{itemize}
    \item \textbf{Step 3(c)} We remove \((n - d - 1)(d + 1) + 1=O_d(n)\) sets from \(\mathcal{Y}\).  
    \item \textbf{Step 3(d)} Since \(\abs{Z_i} = d\), there are at most \(n\) possible values of \(k\) such that \(F_k \cap Y_i = Z_i\). For each such \(k\), step 2 ensures that \(\abs{B_k} \geq d - 2\). If \(\abs{B_k} \geq d - 1\), there are at most \(n^2\) sets \(Y' \in \mathcal{Y}\) such that \(Y' \cap F_k = B_k\); if \(\abs{B_k} = d - 2\), there are at most \(n^3\) such sets. However, since \(Z_i \notin \mathcal{D}'\), by definition the number of \(k\) satisfying \(F_k \cap Y_i = Z_i\) and \(\abs{B_k} = d - 2\) is at most \(\Gamma_d\). Therefore, step 3(d) removes at most \(n^3 + \Gamma_d n^3 = (\Gamma_d + 1)n^3\) sets.
\end{itemize} 

Combining these, each iteration of step 3 removes at most $2\Gamma_{d}n^{3}$ sets from \(\mathcal{Y}\). When the algorithm terminates, since $|\mathcal{F}|=\binom{n}{d}-t$ and $t\ge s$, we conclude that  
\[
\abs{\mathcal{Y}^*} \geq \binom{n}{d + 1} - \binom{n}{d} - c_{1}n^{d - 2} - 2\Gamma_d n^3 \cdot t,
\]  
where \(s\) denotes the number of iterations in step 3 and $c_{1}$ is a constant which depend on $d$. When the algorithm terminates, we obtain a family \(\mathcal{Y} = \{Y_1, Y_2, \ldots, Y_s\}\).

Let \(\alpha \in \mathbb{R}_{> 0}\) be the unique positive real number such that \(\abs{\mathcal{D} \cup \mathcal{D}'} = \binom{\alpha}{d}\). When the algorithm terminates, we must have \(\partial_d \mathcal{Y^*} \subseteq \mathcal{D}\cup\mathcal{D}'\). Applying~\cref{thm:KK}, we deduce that  
\[
\abs{\mathcal{Y}^*} \leq \binom{\alpha}{d + 1}.
\]  
This gives the inequality:  
\begin{equation}
\label{eq:key-ineq-1}
\binom{\alpha}{d + 1} \geq \binom{n}{d + 1} - \binom{n}{d} - c_{1}n^{d-2} - 2\Gamma_d n^{3}\cdot t.
\end{equation}
On the other hand, from inequality~\eqref{eq:boundD} in the proof of~\cref{claim:IndexD}, we have  
\begin{equation*}
\label{eq:key-ineq-3old}
\abs{\mathcal{D}} \leq \frac{(d + 1) \left(\binom{n}{d + 1} - \binom{n}{d} + t\right)}{n - d - 1}.
\end{equation*}  
By definition of $\mathcal{D}'$ and~\cref{claim:Enumerate1}, we have $\abs{\mathcal{D}'} \leq \frac{C_{d}(d+1)\binom{n}{d-2}}{\Gamma_{d}}.$ 
Thus we obtain
\begin{equation}
\label{eq:key-ineq-3}
\binom{\alpha}{d} \leq \abs{\mathcal{D}} + \abs{\mathcal{D}'} \leq \frac{(d + 1) \left(\binom{n}{d + 1} - \binom{n}{d} + t\right)}{n - d - 1} + \frac{C_{d}(d+1)\binom{n}{d-2}}{\Gamma_{d}}.
\end{equation}  

What remains is to perform an explicit computation by combining inequalities~\eqref{eq:key-ineq-1} and~\eqref{eq:key-ineq-3}.

It follows from inequality~\eqref{eq:key-ineq-3} that
\begin{equation}
\label{eq:key-ineq-2}
\binom{\alpha}{d + 1} = \frac{\alpha - d}{d + 1} \binom{\alpha}{d} \leq \frac{\alpha - d}{n - d - 1} \left(\binom{n}{d + 1} - \binom{n}{d} + t\right) + \frac{(\alpha-d)\cdot C_{d}\binom{n}{d-2}}{\Gamma_{d}}.
\end{equation}

By combining inequalities~\eqref{eq:key-ineq-1} and \eqref{eq:key-ineq-2}, we obtain the following inequality:
$$\binom{n}{d + 1} - \binom{n}{d} - c_{1}n^{d-2} {-2\Gamma_d n^3 t} \leq \frac{\alpha - d}{n - d - 1} \left(\binom{n}{d + 1} - \binom{n}{d} + t\right) +\frac{(\alpha-d)\cdot C_{d}\binom{n}{d-2}}{\Gamma_{d}}.$$
Upon simplification, we arrive at:
\begin{equation}\label{eq:key-ineq-4}
c_{1}n^{d - 2} +  {3 \Gamma_d n^3 t + \frac{(\alpha-d)\cdot C_{d}\binom{n}{d-2}}{\Gamma_{d}}} \ge \frac{n - 1 - \alpha}{n - d - 1} \left(\binom{n}{d + 1} - \binom{n}{d}\right) \geq c_{3} (n - 1 - \alpha) n^d,
\end{equation}
where the term \(\frac{\alpha - d}{n - d - 1} \cdot t\) is absorbed into {\(2\Gamma_d n^3\cdot t\)}, with \(c_{2}'\) being a constant that depends only on \(d\), and \(c_{3} > 0\) is a constant depending on \(d\).

On the other hand, since $\binom{\alpha}{d}$ is a degree $d$ polynomial in $\alpha$, when $\alpha \leq n - 1$, we have
$$\binom{\alpha}{d} \geq \binom{n - 1}{d} - c_{4}(n - 1 - \alpha) n^{d - 1},$$
for some $c_{4}>0$ which depends on $d$.
Therefore, it follows from inequality~\eqref{eq:key-ineq-3} that
$$\binom{n}{d + 1} - \binom{n}{d} + t \geq \frac{n - d - 1}{d + 1} \binom{n - 1}{d} - c_{5}(n-1-\alpha)n^{d} -\frac{C_{d}\binom{n}{d-2}(n-d-1)}{\Gamma_{d}},$$
for some $c_{5}>0$ which depends on $d$. Since $\frac{n - d - 1}{d + 1} \binom{n - 1}{d} = \binom{n - 1}{d + 1}$, we have
\begin{equation}\label{eq:key-ineq-5}
\begin{aligned}
t &\geq \binom{n - 1}{d + 1} + \binom{n}{d} - \binom{n}{d + 1} - c_{5}(n-1-\alpha)n^{d} - \frac{C_{d}\binom{n}{d-2}(n-d-1)}{\Gamma_{d}}\\ & = \binom{n-1}{d-1} - c_{5}(n-1-\alpha)n^{d} - \frac{C_{d}\binom{n}{d-2}(n-d-1)}{\Gamma_{d}}.
\end{aligned}
\end{equation}

Taking a linear combination of inequalities~\eqref{eq:key-ineq-4} and \eqref{eq:key-ineq-5}, we can see that 
\begin{equation}\label{equ:Final}
    \left({3 \Gamma_d n^3} + \frac{c_3}{c_5}\right) t + c_1 n^{d - 2} \geq \frac{c_3}{c_5} \binom{n - 1}{d - 1} - \frac{(\alpha-d)\cdot C_{d}\binom{n}{d-2}}{\Gamma_{d}} - \frac{c_{3}\binom{n}{d-2}(n-d-1)}{c_{5}\cdot\Gamma_{d}}.
\end{equation}
For sufficiently large $n$, we can take $\Gamma_d$ to be a constant significantly larger than $c_{3},c_{5}$ and $C_{d}$. Therefore, the inequality \eqref{equ:Final} implies that $t \geq c_d n^{d - 4}$ for some $c_{d}>0$, which depends only on $d$.

When \(\alpha > n-1\), the inequality~\eqref{eq:key-ineq-4} holds trivially. Therefore, by setting \(c_{4} = c_{5} = 0\) in the above argument, we deduce from~\eqref{eq:key-ineq-5} that \(t \geq c_{d}n^{d-1}\).  

In conclusion, by choosing \(\Gamma_{d}\) to be a sufficiently large constant depending only on \(d\), we obtain \(t \geq \Omega_{d}(n^{d-4})\). This completes the proof of~\cref{thm:MainPoly}.

\section{Generalizations of the Erd\H{o}s-Ko-Rado Theorem: Proof of~\cref{thm:Verification}}\label{sec:EKR}
As mentioned earlier, the case \(s=0\) corresponds to the famous Erd\H{o}s–Ko–Rado theorem. Next, we will prove the cases \(s=d\) and \(s=1\) separately.
\subsection{The case $s=d$}
We re-state the result here for convenience.
\begin{theorem}\label{thm:d=s}
    Let $n\ge d+1\ge 3$ be integers. If for every set $\mathcal{F}\subseteq\binom{[n]}{d+1}$, there exists some subset $B\subseteq F$ of size $d$ such that $F'\cap F\neq B$ for any $F'\in\mathcal{F}$, then $|\mathcal{F}|\le\binom{n-1}{d}$. Moreover, the equality holds if and only if $\mathcal{F}$ is a star of size $\binom{n-1}{d}$.
\end{theorem}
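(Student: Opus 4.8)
The plan is to reformulate the hypothesis in a way that makes the Sauer--Shelah/Frankl--Pach framework irrelevant and instead reduces the problem to the Kruskal--Katona theorem. The key observation is the following dual description: the condition "for every $F\in\mathcal{F}$ there is a $d$-subset $B_F\subseteq F$ with $F'\cap F\neq B_F$ for all $F'\in\mathcal{F}$" says precisely that for each $F\in\mathcal{F}$ there is a $d$-element set $B_F\subsetneq F$ such that no member of $\mathcal{F}$ contains $B_F$ except possibly via $F$ itself — and in fact $F$ does not realize the intersection $B_F$ with itself either, so literally \emph{no} member of $\mathcal{F}$ contains $B_F$ as long as $B_F$ has a unique extension, which it does not. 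More carefully: if $F'\supseteq B_F$ then $F'\cap F\supseteq B_F$, and since $|B_F|=d=|F'|-1=|F|-1$, either $F'\cap F=B_F$ (forbidden) or $F'=F$; but then $F\cap F=F\neq B_F$. Hence $B_F\not\subseteq F'$ for every $F'\in\mathcal{F}$, i.e. $B_F\notin\partial_d\mathcal{F}$. So to each $F\in\mathcal{F}$ we may assign a $d$-subset $B_F\subseteq F$ lying \emph{outside} the $d$-shadow of $\mathcal{F}$.

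Now pass to the complement family $\mathcal{Y}:=\binom{[n]}{d+1}\setminus\mathcal{F}$. Each $B_F$ is a $d$-set avoided by $\mathcal{F}$, so \emph{every} $(d+1)$-superset of $B_F$ lies in $\mathcal{Y}$; in particular $F\in\mathcal{Y}$ would be needed, contradiction — rather, $B_F\subseteq F$ and $B_F\in\partial_d\mathcal{Y}$ with all $n-d$ of its extensions in $\mathcal{Y}$... the cleaner route is: the map $F\mapsto B_F$ witnesses that $|\partial_d\mathcal{Y}|$ is large. Indeed I would argue that distinct $F$ can share the same $B_F$ only in a controlled way (at most one $F\in\mathcal{F}$ can be assigned a given $B$, since $B_F$ determines $F$ as its unique superset in $\mathcal{F}$ — if $B_F=B_{F'}=B$ then $B\subseteq F$ and $B\subseteq F'$, and $F\cap F'\supseteq B$ forces $F=F'$). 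Therefore $|\mathcal{F}|=|\{B_F:F\in\mathcal{F}\}|$ and this set of $d$-sets is disjoint from $\partial_d\mathcal{F}$. Writing $|\mathcal{F}|=\binom{\alpha}{d+1}$, Kruskal--Katona (\cref{thm:KK}) gives $|\partial_d\mathcal{F}|\ge\binom{\alpha}{d}$, so $\binom{n}{d}\ge|\partial_d\mathcal{F}|+|\{B_F\}|\ge\binom{\alpha}{d}+\binom{\alpha}{d+1}=\binom{\alpha+1}{d+1}$... — this is not quite the bound $\binom{n-1}{d}$, so I expect to need the sharper counting: every $(d+1)$-superset of every $B_F$ lies in $\mathcal{Y}$, so $\mathcal{Y}\supseteq\{T:T\supseteq B_F\text{ some }F\}$, and comparing $|\mathcal{Y}|=\binom{n}{d+1}-|\mathcal{F}|$ against the size of this union (via a shadow/co-shadow estimate, or directly via the identity $|P|\le(d+1)(\binom{n}{d+1}-|\mathcal{F}|)$ as in the proof of \cref{claim:IndexD}) yields $|\mathcal{F}|\le\binom{n-1}{d}$.

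The main obstacle, and the step requiring care, is the \emph{equality characterization}. When $|\mathcal{F}|=\binom{n-1}{d}$ the counting inequalities above are all tight; tracing through, $\partial_d\mathcal{Y}=\{B_F:F\in\mathcal{F}\}$ has size exactly $\binom{n-1}{d}$ and $|\mathcal{Y}|=\binom{n-1}{d+1}$, so $\mathcal{Y}$ attains equality in Kruskal--Katona with $\alpha=n-1$, hence its $d$-shadow is minimized. By \cref{thm:FG86} every lower shadow $\partial_k\mathcal{Y}$ is minimized too, in particular $|\partial_1\mathcal{Y}|=n-1$, meaning there is an element $a\in[n]$ lying in no member of $\mathcal{Y}$, i.e. every $(d+1)$-set through $a$ belongs to $\mathcal{F}$; since $|\mathcal{F}|=\binom{n-1}{d}$ is exactly the number of such sets, $\mathcal{F}$ is the star centered at $a$. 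Conversely the star obviously satisfies the hypothesis (take $B_F$ to be any $d$-subset of $F$ not containing $a$). The one point I would double-check is that the Füredi--Griggs hypothesis $n>d\ge g$ applies at the relevant range and that the extremal-shadow deduction does not require $n$ large, so that the stated range $n\ge d+1$ genuinely suffices; if a small-$n$ obstruction appears I would handle $n=d+1,d+2$ by hand.
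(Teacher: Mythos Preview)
Your core observation---that $F\mapsto B_F$ is injective and that $B_F$ lies in no $F'\in\mathcal{F}$ other than $F$ itself---is correct, but you then slip repeatedly into the false claim that $B_F\notin\partial_d\mathcal{F}$ (and later that ``this set of $d$-sets is disjoint from $\partial_d\mathcal{F}$''). Since $B_F\subseteq F\in\mathcal{F}$, in fact $\{B_F:F\in\mathcal{F}\}\subseteq\partial_d\mathcal{F}$, so the inequality $\binom{n}{d}\ge|\partial_d\mathcal{F}|+|\{B_F\}|$ is simply false, and your Kruskal--Katona computation on $\mathcal{F}$ collapses. You notice the resulting bound is wrong but never locate the reason; this is it.

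The paper repairs this in two different ways. The first is exactly the double-counting on $\mathcal{Y}=\binom{[n]}{d+1}\setminus\mathcal{F}$ that you gesture toward via \cref{claim:IndexD}: each $B_F$ has $n-d-1$ extensions in $\mathcal{Y}$ (all except $F$ itself---not ``all $n-d$'' as you write), while each $Y\in\mathcal{Y}$ hosts at most $d+1$ of the $B_F$'s among its $d$-subsets, giving $(n-d-1)|\mathcal{F}|\le(d+1)|\mathcal{Y}|$ and hence $|\mathcal{F}|\le\binom{n-1}{d}$. The second fix is the honest version of your shadow idea: set $\mathcal{G}=\partial_d\mathcal{F}\setminus\{B_F:F\in\mathcal{F}\}$, which \emph{is} disjoint from $\{B_F\}$, but now each $F$ contributes only $d$ (not $d+1$) shadows to $\mathcal{G}$, so plain Kruskal--Katona no longer applies and one must invoke \cref{thm:PartialShadow} with $k=d$; writing $|\mathcal{F}|=\binom{x}{d}$ gives $|\mathcal{G}|\ge\binom{x}{d-1}$ and thus $\binom{n}{d}\ge\binom{x}{d}+\binom{x}{d-1}=\binom{x+1}{d}$. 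Your equality analysis via \cref{thm:FG86} applied to $\mathcal{Y}$ is correct and matches the paper verbatim, though note that the identity $\partial_d\mathcal{Y}=\{B_F\}$ you use there only follows once the double-counting has been carried out and tightness forces every $d$-subset of every $Y\in\mathcal{Y}$ to equal some $B_F$.
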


Indeed, \cref{thm:d=s} follows immediately from~\cref{claim:IndexD}. Here we present another proof via~\cref{thm:PartialShadow}, which might be of independent interest. 
\begin{proof}[Proof of~\cref{thm:d=s} via~\cref{thm:PartialShadow}]
Let $\mathcal{B}=\{B_F\mid F\in\mathcal{F}\}$. Note that we have $B_{F}\neq B_{F'}$ for any $F\neq F'\in \mathcal{F}$. Thus, $\abs{\mathcal{B}}=\abs{\mathcal{F}}$. 
    Then we apply~\cref{thm:PartialShadow} by setting $k=d$ and
    \begin{equation*}
        \mathcal{G}:=\{G\in\binom{[n]}{d}: G\subseteq F\ \textup{for\ some\ }F\in\mathcal{F}, G\neq B_F\}.
    \end{equation*}
    Observe that for any $F\in\mathcal{F}$, all of its $d$-shadows but $B_F$ are in $\mathcal{G}$. Thus, the set families $\mathcal{F},\mathcal{G}$ satisfies the condition in \cref{thm:PartialShadow}. Moreover, we know that $\mathcal{G}\cap \mathcal{B}=\emptyset$ since a $d$-shadow $A$ of $F\in\mathcal{F}$ which is not $B_F$ can not be the same as $B_{F'}$ for any $F'\in\mathcal{F'}$.

    Let $|\mathcal{F}|=\binom{x}{d}$ for some real number $x$. Ten we have
    \begin{equation*}
        \binom{n}{d}\ge |\mathcal{G}|+|\mathcal{B}|\ge\binom{x}{d-1}+\binom{x}{d}=\binom{x+1}{d},
    \end{equation*}
    which yields $x\le n-1$. This completes the proof.   
\end{proof}

\subsection{The case $s = 1$}
In this subsection, we prove \cref{thm:Verification} for the case \(s = 1\). Moreover, we establish a stronger stability result as follows:
\begin{theorem}
    \label{thm:s=1}
    Let \(d \geq 2\) be an integer, and let \(n\) be a sufficiently large integer relative to \(d\). Suppose \(\mathcal{F} \subseteq \binom{[n]}{d+1}\) satisfies the condition that for every set \(F \in \mathcal{F}\), there exists a subset \(B_F \subseteq F\) of size \(1\) such that \(F \cap F' \neq B_F\) for all \(F' \in \mathcal{F}\). Then \(\mathcal{F}\) is either a star with $|\mathcal{F}|\le\binom{n-1}{d}$, or \(|\mathcal{F}| \leq \binom{n-1}{d} - C_d n^{d-1}\) for some constant \(C_d > 0\).
\end{theorem}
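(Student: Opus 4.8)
The plan is to first pass to the dense regime: if $|\mathcal{F}| \le \binom{n-1}{d} - C_d n^{d-1}$ we are done, so assume $|\mathcal{F}| > \binom{n-1}{d} - C_d n^{d-1}$ and show $\mathcal{F}$ must be a star. Since $|B_F| = 1$, write $B_F = \{b_F\}$ and partition $\mathcal{F} = \bigsqcup_{v \in [n]} \mathcal{F}_v$ with $\mathcal{F}_v = \{F \in \mathcal{F} : b_F = v\}$; every $F \in \mathcal{F}_v$ contains $v$. Set $\mathcal{F}_v^{-} = \{F \setminus \{v\} : F \in \mathcal{F}_v\}$ and let $\mathcal{L}_v = \{G \setminus \{v\} : G \in \mathcal{F},\ v \in G\}$ be the link of $v$. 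The key elementary observation is that the hypothesis says precisely that every member of $\mathcal{F}_v^{-}$ meets every member of $\mathcal{L}_v$: if $F \in \mathcal{F}_v$ and $v \in G \in \mathcal{F}$ then $v \in F \cap G$ and $F \cap G \ne \{v\}$ force $|F \cap G| \ge 2$. In particular $\mathcal{F}_v^{-}$ is an intersecting $d$-uniform family on $[n] \setminus \{v\}$, so $|\mathcal{F}_v| \le \binom{n-2}{d-1}$ by the Erd\H{o}s--Ko--Rado theorem, and more refined structure will come from Hilton--Milner.

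Next I would isolate the ``heavy'' vertices. Put $K_0 := \binom{n-2}{d-1} - \binom{n-d-2}{d-1} + 1 = \Theta_d(n^{d-2})$ and call $v$ heavy if $|\mathcal{F}_v| > K_0$. By \cref{thm:HiltonMilner} applied to $\mathcal{F}_v^{-}$, for heavy $v$ we have $\bigcap_{F \in \mathcal{F}_v} F \supseteq \{a_v\}$ for some $a_v \ne v$, so $\mathcal{F}_v \subseteq \{F : \{v,a_v\} \subseteq F\}$; for non-heavy $v$ we just have $|\mathcal{F}_v| \le K_0$, and since $|\mathcal{F}|$ is $\Theta(n^d) \gg n K_0$ there is at least one heavy vertex. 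The heart of the argument is the \emph{coherence claim}: all heavy vertices share a common value $a_v =: a$. Granting it, the proof concludes cleanly. Let $\mathcal{G} = \{F \in \mathcal{F} : a \notin F\}$ and $\mathcal{F}[x] = \{F \in \mathcal{F} : x \in F\}$. If $v$ is heavy then $\mathcal{F}_v \subseteq \mathcal{F}[a]$, so any $v$ with $\mathcal{F}_v \cap \mathcal{G} \ne \emptyset$ is non-heavy; call these $v$ \emph{active}, let $R$ be their number, and note $|\mathcal{G}| \le \sum_{v\text{ active}} |\mathcal{F}_v| \le R K_0$. For an active $v$ fix $F_v \in \mathcal{F}_v$ with $a \notin F_v$; for every $G \in \mathcal{F}[a] \cap \mathcal{F}[v]$ we get $|F_v \cap G| \ge 2$, and since $a \notin F_v$ the $(d-1)$-set $G \setminus \{a,v\}$ meets the fixed $d$-set $F_v \setminus \{v\}$, whence $|\mathcal{F}[a] \cap \mathcal{F}[v]| \le \binom{n-2}{d-1} - \binom{n-d-2}{d-1} = K_0 - 1$. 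Feeding the identity $\sum_{v \ne a} |\mathcal{F}[a] \cap \mathcal{F}[v]| = d\,|\mathcal{F}[a]|$ with this bound for active $v$ and the trivial bound $\binom{n-2}{d-1}$ for the remaining $v \ne a$ gives $|\mathcal{F}[a]| \le \binom{n-1}{d} - \tfrac{R}{d}\binom{n-d-2}{d-1}$, so $|\mathcal{F}| = |\mathcal{F}[a]| + |\mathcal{G}| \le \binom{n-1}{d} - R\bigl(\tfrac1d \binom{n-d-2}{d-1} - K_0\bigr)$. Since $\tfrac1d\binom{n-d-2}{d-1} = \Theta(n^{d-1})$ dominates $K_0 = O(n^{d-2})$, for large $n$ this is at most $\binom{n-1}{d} - R\cdot \tfrac{n^{d-1}}{2\cdot d!}$, which for $R \ge 1$ contradicts the assumption with $C_d = \tfrac1{2\cdot d!}$. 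Hence $R = 0$, i.e.\ $\mathcal{G} = \emptyset$ and $\mathcal{F}$ is a star.

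The hard part will be the coherence claim, and there is essentially no room to relax it: a single heavy $w$ with $a_w \ne a$ is necessarily active (if all of $\mathcal{F}_w$ passed through $a$ then $\mathcal{F}_w \subseteq \{F : \{w, a_w, a\} \subseteq F\}$ would force $|\mathcal{F}_w| \le \binom{n-3}{d-2} < K_0$), and each such $w$ would add up to $\binom{n-2}{d-1} = \Theta(n^{d-1})$ to $|\mathcal{G}|$, swamping the $\Theta(n^{d-1})$ gain above. To establish coherence I would use the full cross-intersecting relation between $\mathcal{L}_v$ and $\mathcal{F}_v^{-}$, not merely that $\mathcal{F}_v^{-}$ is intersecting: for heavy $v$, every $d$-set in $\mathcal{L}_v$ must meet each of the $>K_0$ sets of $\mathcal{F}_v^{-} \subseteq \{A : a_v \in A\}$, which should force $\mathcal{L}_v$ to concentrate on sets through $a_v$ (up to $O(n^{d-2})$ exceptions), and then two heavy vertices $v, w$ with $a_v \ne a_w$ would have $\mathcal{F}_v, \mathcal{F}_w$ ``pointing'' in incompatible directions, either violating the hypothesis on a small sub-configuration or deflating $|\mathcal{F}|$. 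I expect this to require a quantitative Hilton--Milner-type stability statement for cross-intersecting families (controlling $\{B : B \cap A \ne \emptyset\ \forall A \in \mathcal{A}\}$ when $\mathcal{A}$ is a large subfamily of a star), together with bookkeeping that keeps every error term strictly below the $\Theta(n^{d-1})$ scale --- which, since the gap we must create is itself of order $n^{d-1}$, is the delicate thread running through the whole proof.
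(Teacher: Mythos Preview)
Your setup matches the paper's almost exactly: it too partitions by $b_F$, calls $v$ \emph{good} when $\abs{\mathcal{F}_v}\ge 6n^{d-2}$, and uses a Hilton--Milner-type lemma to extract the common element you call $a_v$ (their $f(v)$). Your endgame from the coherence claim --- the double count $\sum_{v\ne a}\abs{\mathcal{F}[a]\cap\mathcal{F}[v]}=d\,\abs{\mathcal{F}[a]}$ combined with $\abs{\mathcal{G}}\le RK_0$ --- is correct and in fact tidier than the paper's casework in \cref{claim:Fgood}.

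The gap is precisely the coherence claim, and your sketch does not close it. Your cross-intersecting observation, pushed through a sunflower of $d+1$ disjoint petals in $\mathcal{F}_v^{-}$ (this needs a threshold slightly above $K_0$; see the paper's \cref{lem:genHM}), yields the crucial upgrade: for heavy $v$, \emph{every} $F\in\mathcal{F}$ with $v\in F$ contains $a_v$, not merely those in $\mathcal{F}_v$ (this is \cref{claim:a_to_fa}). From this one checks that if $a_v$ is itself heavy with $a_{a_v}\ne v$ then $\abs{\mathcal{F}_v}\le\binom{n-3}{d-2}<K_0$, a contradiction; so on heavy vertices the map $v\mapsto a_v$ either lands on a non-heavy vertex or enters a $2$-cycle. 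Two real obstructions then remain, neither of which your sketch addresses. First, the $2$-cycle: if $a_v=w$ and $a_w=v$ are both heavy, every $F\in\mathcal{F}$ contains both of $\{v,w\}$ or neither, and bounding the ``neither'' part by anything below $\binom{n-2}{d}$ already requires a nontrivial input --- the paper invokes its own main \cref{thm:GeneralD} here (\cref{claim:NoManyGoodElements}); an induction on $n$ would also work, but \emph{some} such device is needed. Second, even with no $2$-cycles, two heavy vertices $v,v'$ pointing to distinct non-heavy targets $a_v\ne a_{v'}$ is not excluded, and your endgame collapses there since $v'$ would be simultaneously heavy and active, so $\abs{\mathcal{G}}\le RK_0$ fails. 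The paper handles this second obstruction by a separate counting argument (\cref{claim:Fgood}(2)); your ``incompatible directions \ldots\ deflating $\abs{\mathcal{F}}$'' remark is the right intuition but not a proof.
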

\begin{proof}[Proof of~\cref{thm:s=1}]
    
We start with the following strengthening of~\cref{thm:HiltonMilner}.
\begin{lemma}\label{lem:genHM}
Let \(d \geq 2\) and \(n \geq 2d\). Suppose \(\mathcal{H} \subseteq \binom{[n]}{d}\) is a \(d\)-uniform intersecting family with \(|\mathcal{H}| \geq 6n^{d-2}\). Then there exists an element \(\ell \in [n]\) and sets \(A_1, \dots, A_{d+1} \in \mathcal{H}\) such that:  
\begin{enumerate}
    \item[\textup{(1)}] \(\ell \in A\) for all \(A \in \mathcal{H}\), and
    \item[\textup{(2)}] \(A_1 \setminus \{\ell\}, A_2 \setminus \{\ell\}, \dots, A_{d+1} \setminus \{\ell\}\) are pairwise disjoint.  
\end{enumerate}
    \end{lemma}
    \begin{proof}
As $d(d-1)\leq 6(d-2)!$ and $d^2 < 5(d-1)!$, we have
$$d(d-1)\binom{n-1}{d-2} < \frac{d(d-1)n^{d-2}}{(d-2)!} \leq 6n^{d-2}.$$
Applying the mean value theorem to the function $\binom{x}{d-1}$, we also have 
$$\binom{n-1}{d-1}-\binom{n-1-d}{d-1} < \frac{d^2(n-1)^{d-2}}{(d-1)!} <5n^{d-2}.$$ 
Thus, it follows that $$\abs{\mathcal{H}}>\max\left(\binom{n-1}{d-1}-\binom{n-1-d}{d-1}+1,d(d-1)\binom{n-1}{d-2}\right).$$
By~\cref{thm:HiltonMilner}, since \(\abs{\mathcal{H}} > \binom{n-1}{d-1} - \binom{n-1-d}{d-1} + 1\), we conclude that \(I = \bigcap_{A \in \mathcal{H}} A \neq \emptyset\).  
If \(\abs{I} > 1\), then we must have  
\[
\abs{\mathcal{H}} \leq \binom{n-\abs{I}}{d-\abs{I}} \leq \binom{n-2}{d-2},
\]  
which contradicts the assumption on \(\abs{\mathcal{H}}\). Thus, we may assume \(I = \{\ell\}\).  
Next, let \(r\) be the largest integer such that there exist sets \(A_1, \dots, A_r \in \mathcal{H}\) where \(A_1 \setminus \{\ell\}, \dots, A_r \setminus \{\ell\}\) are pairwise disjoint.  
If \(r \leq d\), then by the definition of \(r\), every \(A \in \mathcal{H}\) contains \(\ell\) and intersects \(\bigcup_{i=1}^r \big(A_i \setminus \{\ell\}\big)\). It follows that  
\[
\abs{\mathcal{H}} \leq r(d-1) \binom{n-1}{d-2} \leq d(d-1) \binom{n-1}{d-2},
\]  
which again contradicts the assumption on \(\abs{\mathcal{H}}\). Therefore, we must have \(r \geq d+1\), as required.
\end{proof}
For each \(a \in [n]\), define  
\[
\mathcal{F}_a := \{F \in \mathcal{F} : B_F = \{a\}\}.
\]  
We say that \(a\) is \emph{good} if \(\abs{\mathcal{F}_a} \geq 6n^{d-2}\); otherwise, \(a\) is called \emph{bad}. Define  
\[
G := \{a \in [n] \mid \text{\(a\) is good}\}.
\]  
Now consider any \(a \in G\). Observe that  
\[
\mathcal{X}_a = \{F \setminus \{a\} \mid F \in \mathcal{F}_a\}
\]  
is a \(d\)-uniform intersecting family of size at least \(6n^{d-2}\). By applying \cref{lem:genHM} to \(\mathcal{X}_a\), there exists an element \(\ell \in [n]\) and sets \(A_1, \dots, A_{d+1} \in \mathcal{X}_a\) such that:  
\begin{enumerate}
    \item[\textup{(1)}]\(\ell \in A\) for all \(A \in \mathcal{X}_a\), and
    \item[\textup{(2)}]\(A_1 \setminus \{\ell\}, \dots, A_{d+1} \setminus \{\ell\}\) are pairwise disjoint.
\end{enumerate}
 Finally, we set \(f(a) := \ell\).
 \begin{claim}\label{claim:a_to_fa}
For any $a \in G$ and $F\in \mathcal{F}$, if $F$ contains $a$, then $F$ must also contain $f(a)$. 
 \end{claim}
\begin{poc}
    Assume for the sake of contradiction that there exists $F\in\mathcal{F}$ with $a\in F$ and $f(a)\notin F$. Since $\abs{F\setminus\{a\}}=d$, we know that there must exist an index $i \in [d + 1]$ such that $F\cap A_i=\emptyset$, where $A_i$ is defined before the claim. However, this contradicts with the choice of $B_{A_i \cup \{a\}}=\{a\}$.
\end{poc}
\begin{claim}\label{claim:NoManyGoodElements}
If there exist some $a\in G$ and positive integer $k$ such that $a,f(a),f(f(a)),\dots, f^{(k)}(a)$ are all good and $f^{(k)}(a) = a$, then $\abs{\mathcal{F}}\leq \binom{n-1}{d} - c_{d}n^{d-1}$ for some constant $c_{d}>0$, 
\end{claim}
\begin{poc}
We may assume, without loss of generality, that \(k\) is the smallest positive integer such that \(a, f(a), \dots, f^{(k-1)}(a)\) are all distinct. Since \(f(a) \neq a\) by definition, it follows that \(k \geq 2\). To bound the number of sets \(F \in \mathcal{F}\), we consider whether \(F\) intersects \(\{a, f(a), \dots, f^{(k-1)}(a)\}\) or not. 
First, if \(F\) does not intersect \(\{a, f(a), \dots, f^{(k-1)}(a)\}\), then by \cref{thm:FranklPach}, the number of such sets is at most \(\binom{n-k}{d}\), as any subfamily of \(\mathcal{F}\) has VC-dimension at most \(d\). On the other hand, if \(F\) intersects \(\{a, f(a), \dots, f^{(k-1)}(a)\}\), then by \cref{claim:a_to_fa}, \(F\) must contain the entire set \(\{a, f(a), \dots, f^{(k-1)}(a)\}\). The number of such sets is at most \(\binom{n-k}{d+1-k}\).
Combining these bounds, if \(k \geq 3\), we have
\[
|\mathcal{F}| \leq \binom{n-3}{d} + \binom{n-3}{d-2} < \binom{n-2}{d} = \binom{n-1}{d} - \binom{n-2}{d-1}.
\]
When \(k = 2\), applying \cref{thm:GeneralD}, we obtain
\[
|\mathcal{F}| \leq \binom{n-3}{d} + C_d n^{d-1-\frac{1}{4d-2}} + \binom{n-2}{d-1} = \binom{n-1}{d} - c n^{d-1},
\]
for some constant \(c > 0\). This finishes the proof.
\end{poc}

By~\cref{claim:NoManyGoodElements}, we may assume that for all good \(a\), the sequence \(a, f(a), f(f(a)), \dots\) terminates at a bad element \(f^{(k)}(a)\) for some \(k \geq 1\) (otherwise, the proof is complete); we denote this bad element by \(b(a)\). From the definition and \cref{claim:a_to_fa}, if \(F \in \mathcal{F}\) contains \(a \in G\), then \(F\) also contains \(b(a)\). Let \[B = \{b \in [n] : b = b(a) \text{ for some good } a\},\] and define \[\mathcal{F}_{\mathrm{good}} = \{F \in \mathcal{F} : F \text{ contains at least one good element}\}.\]

For any  $F\in\mathcal{F}\setminus\mathcal{F}_{\mathrm{good}}$, the element in $B_F$ lies in $[n] \setminus G$. By the definition of $G$, we have
$$\abs{\mathcal{F}\setminus\mathcal{F}_{\mathrm{good}}}\leq \sum_{v \in [n] \setminus G} \abs{\mathcal{F}_v} \leq 6(n - \abs{G})n^{d-2}.$$ 
We now prove the following key claim, which consists of three upper bounds on the size of $\mathcal{F}_{\mathrm{good}}$.

\begin{claim}\label{claim:Fgood}
We have the following upper bounds on $|\mathcal{F}_{\textup{good}}|$.
\begin{enumerate}
    \item[\textup(1)] We always have 
    \[\abs{\mathcal{F}_{\mathrm{good}}}\leq \sum_{i=1}^d\binom{\abs{G}}{i}\binom{n-1-\abs{G}}{d-i}=\binom{n-1}{d}-\binom{n-1-\abs{G}}{d}.\] 
    
    \item[\textup(2)]  If $\abs{B}>1$, then we have
    \[\abs{\mathcal{F}_{\mathrm{good}}} \leq \binom{n-1}{d}-\binom{n-1-\abs{G}}{d}-\binom{\abs{G}-1}{d-1}.\] 

    \item[\textup(3)]  If $\abs{B}=1$ and there exists $F_0\in\mathcal{F}$ such that $F_0\subseteq [n]\setminus(G\cup B)$, then we have
    \[\abs{\mathcal{F}_{\mathrm{good}}} \leq \binom{n-1}{d}-\binom{n-1-\abs{G}}{d}-\binom{\abs{G}}{d-1}.\] 
\end{enumerate}
\end{claim}
\begin{poc}
    For (1): We can count the number of sets \(F \in \mathcal{F}_{\mathrm{good}}\) based on the size of \(F \cap G\). Suppose \(\abs{F \cap G} = i \geq 1\). There are at most \(\binom{\abs{G}}{i}\) ways to select the \(i\)-element subset \(F \cap G\). After fixing \(A = F \cap G\), we then choose any element \(a \in A\). We know that \(b(a) \in F \setminus G\), and there are at most \(\binom{n-1-\abs{G}}{d-i}\) ways to pick the remaining elements of \(F \setminus G\) from \( [n] \setminus (G \cup \{b(a)\})\).

Thus, the size of \(\mathcal{F}_{\mathrm{good}}\) is at most
\[
\sum_{i=1}^d \binom{\abs{G}}{i} \binom{n-1-\abs{G}}{d-i} = \binom{n-1}{d} - \binom{n-1-\abs{G}}{d},
\]
by Vandermonde's identity. This completes the proof of the first part.

For (2): Note that if \(\abs{B} > 1\), and \(A \subseteq G\) is a subset of size \(d\) containing any pair \(a, a' \in A\) such that \(b(a) \neq b(a')\), then it is impossible to have \(F \cap G = A\) for any \(F \in \mathcal{F}\). This is because, in that case, \(\{b(a), b(a')\} \subseteq F \setminus G\), which implies \(\abs{F} \geq d + 2\), a contradiction. Therefore, we conclude that  
\[
\abs{\{F : F \in \mathcal{F}_{\mathrm{good}}, \abs{F \cap G} = d\}} \leq \binom{\abs{G}}{d} - \bigg|\left\{ A \in \binom{G}{d} : \text{\(A\) contains \(a, a'\) with \(b(a) \neq b(a')\)} \right\}\bigg|.
\]  
Substituting this into the argument in part (1), we can improve the upper bound on \(\abs{\mathcal{F}_{\mathrm{good}}}\) to  
\[
\abs{\mathcal{F}_{\mathrm{good}}} \leq \sum_{i=1}^d \binom{\abs{G}}{i} \binom{n-1-\abs{G}}{d-i} - \bigg|\left\{ A \in \binom{G}{d} : \text{\(A\) contains \(a, a'\) with \(b(a) \neq b(a')\)} \right\}\bigg|.
\]
Fix any \(b \in B\), and let \(G_b = \{a \in G \mid b(a) = b\}\). Note that \(1 \leq \abs{G_b} \leq \abs{G} - 1\) since \(\abs{B} > 1\). We now estimate  
\[
\bigg|\left\{ A \in \binom{G}{d} \mid \text{\(A\) contains \(a, a'\) with \(b(a) \neq b(a')\)} \right\}\bigg| \geq \sum_{i=1}^{d-1} \binom{\abs{G_b}}{i} \binom{\abs{G} - \abs{G_b}}{d - i},
\]  
since we can choose \(i\) elements from \(G_b\) and \(d - i\) elements from \(G \setminus G_b\) to form \(A\). By applying the identity for binomial coefficients, we get  
\[
\sum_{i=1}^{d-1} \binom{\abs{G_b}}{i} \binom{\abs{G} - \abs{G_b}}{d - i} = \binom{\abs{G}}{d} - \binom{\abs{G} - \abs{G_b}}{d} - \binom{\abs{G_b}}{d} \geq \binom{\abs{G}}{d} - \binom{\abs{G} - 1}{d} = \binom{\abs{G}-1}{d-1},
\]  
This completes the proof of part (2).

For (3): Assume \(B = \{b\}\) and \(B_{F_0} = \{v\}\). For any \((d-1)\)-set \(A \subseteq G\), we know that \(A \cup \{b, v\}\) cannot lie in \(\mathcal{F}\) because \((A \cup \{b, v\}) \cap F_0 = \{v\} = B_{F_0}\). Therefore, we have  
\[
\big|\{ F : F \in \mathcal{F}_{\mathrm{good}}, \abs{F \cap G} = d - 1 \}\big| \leq \binom{\abs{G}}{d - 1} (n - 1 - \abs{G}) - \binom{\abs{G}}{d - 1}.
\]  
Substituting this into the argument from part (1), we obtain  
\[
\abs{\mathcal{F}_{\mathrm{good}}} \leq \sum_{i=1}^d \binom{\abs{G}}{i} \binom{n-1-\abs{G}}{d-i} - \binom{\abs{G}}{d-1},
\]  
as desired. This finishes the proof.
\end{poc}
We are ready to complete the proof of \cref{thm:s=1}. We proceed by casework on the sizes of $G$ and $B$. Let $C^*_d \in (0, 1)$ be any constant such that $\frac{(C^*_d)^{d-1}}{(d-1)!}> 100(1-C^*_d)$.
\begin{Case}
    \item If $\abs{G}\leq C^*_d\cdot n$, since $n$ is sufficiently large, by \cref{claim:Fgood}(1), we have
    \[\abs{\mathcal{F}}\leq \binom{n-1}{d}-\binom{n-1-\abs{G}}{d}+6(n - \abs{G})n^{d-2}< \binom{n-1}{d} - C_{1}n^{d}\]
   for some constant $C_{1}$ which only depends on $d$.
    \item If $\abs{G}\geq C^*_d\cdot n$ and $\abs{B}>1$, then when $n$ is sufficiently large, we have $\binom{\abs{G}-1}{d-1}\geq \frac{\abs{G}^{d-1}}{2(d-1)!}$. Thus~\cref{claim:Fgood}(2) implies that
    \begin{align*}
        \abs{\mathcal{F}}\leq &\binom{n-1}{d}-\binom{n-1-\abs{G}}{d}-\binom{\abs{G}-1}{d-1}+6(n-\abs{G})n^{d-2}\\
        \leq& \binom{n-1}{d}- \left(\frac{(C^*_d)^{d-1}}{2(d-1)!} + 6(C^*_d-1)\right)n^{d-1}\\
        \leq& \binom{n-1}{d} - 6(1 - C^*_d)n^{d - 1}.
    \end{align*}
    \item If $\abs{G}\geq C^*_d\cdot n$, $\abs{B}=1$, and there exists $F_0\in\mathcal{F}$ such that $F_0\subseteq [n]\setminus(G\cup B)$. When $n$ is sufficiently large, we have $\binom{\abs{G}}{d-1}\geq \frac{\abs{G}^{d-1}}{2(d-1)!}$. Thus, \cref{claim:Fgood}(3) implies that
    \begin{align*}
        \abs{\mathcal{F}}\leq &\binom{n-1}{d}-\binom{n-1-\abs{G}}{d}-\binom{\abs{G}}{d-1}+6(n-\abs{G})n^{d-2}\\
        \leq& \binom{n-1}{d}- \left(\frac{(C^*_d)^{d-1}}{2(d-1)!} + 6(C^*_d-1)\right)n^{d-1}\\
        \leq& \binom{n-1}{d} - 6(1 - C^*_d)n^{d - 1}.
    \end{align*}
    \item Finally, suppose that $\abs{B}=1$ and there does not exist $F\in\mathcal{F}$ such that $F\subseteq [n]\setminus(G\cup B)$. We claim that every $F\in \mathcal{F}$ contains $B$. Indeed, if $F \in \mathcal{F}_{\mathrm{good}}$, then there is some $a \in G$ such that $a \in F$, thus $B=\{b(a)\} \subseteq F$ by \cref{claim:a_to_fa} and the definition of $b(a)$. Otherwise, since $F \cap G=\emptyset$, we must have $B \subseteq F$. We conclude that $\mathcal{F}$ is trivially intersecting and $\abs{\mathcal{F}}\leq \binom{n-1}{d}$.
\end{Case}
Based on the detailed analysis of the above four cases, we  conclude that \(\mathcal{F}\) is either trivially intersecting, or \(|\mathcal{F}|\leq \binom{n-1}{d} - C_{d} n^{d-1}\) for some \(C_{d}>0\). This finishes the proof. 
\end{proof}

\section{Concluding remarks}\label{sec:conclusion}
Although it still seems very hard to resolve~\cref{question}, we believe the value $\binom{n-1}{d}+\binom{n-4}{d-2}$ should be the correct answer and our proof provides some new insights. Here we discuss more about the connection between the best-known construction of Mubayi-Zhao in~\cite{2007JAC} (which generalizes the original construction of Ahlswede and Khachatrian~\cite{1997CombFan}) and our proof of~\cref{thm:GeneralD}.

We first recall the construction of $\mathcal{F}$ with $|\mathcal{F}|=\binom{n-1}{d}+\binom{n-4}{d-2}$ in~\cite{2007JAC}. Let $\cG_1, \cG_2 \subseteq \binom{[n] \setminus \{1, 2\}}{d}$ satisfy the following three conditions:
\begin{enumerate}
    \item $\cG_1 \cup \cG_2 = \binom{[n] \setminus \{1, 2\}}{d}$.
    \item $\cG_1 \cap \cG_2 = \{F \in \binom{[n] \setminus \{1, 2\}}{d}: \{3, 4\} \subseteq F\}$.
    \item $\cG_1 \supseteq \{F \in \binom{[n] \setminus \{1, 2\}}{d}: 3 \in F\}$, $\cG_2 \supseteq \{F \in \binom{[n] \setminus \{1, 2\}}{d}: 4 \in F\}$.
\end{enumerate}
Note that
$$\binom{n - 2}{d - 1} + \abs{\cG_1} + \abs{\cG_2} = \binom{n - 2}{d - 1} + \binom{n - 2}{d} + \binom{n - 4}{d - 2} = \binom{n - 1}{d} + \binom{n - 4}{d - 2}.$$
It was claimed in~\cite{2007JAC} that the number of such choices of $(\cG_{1},\cG_{2})$ is $\frac{P(n-4,d)}{2}$, where $P(n-4,d)$ is the number of non-isomorphic $d$-uniform hypergraphs on $n-4$ vertices. After selecting the set systems \(\mathcal{G}_1\) and \(\mathcal{G}_2\), Mubayi and Zhao~\cite{2007JAC} constructed \(\mathcal{F} \subseteq \binom{[n]}{d+1}\) using four types of pairs \((F_k, B_k)\).
\begin{enumerate}
    \item Sets $F_k$ in $\binom{[n]}{d + 1}$ containing $\{1, 2\}$, with $B_k = F_k \setminus \{1, 2\}$ and $\abs{B_k} = d - 1$.
    \item Sets $F_k$ in $\{1\} \cup (\cG_1 \setminus \cG_2)$ with $B_k = F_k \setminus \{1\}$ and $\abs{B_k} = d$.
    \item Sets $F_k$ in $\{2\} \cup (\cG_2 \setminus \cG_1)$ with $B_k = F_k \setminus \{2\}$ and $\abs{B_k} = d$.
    \item Sets $F_k$ of the form $\{1\} \cup (\cG_1 \cap \cG_2)$ and $\{2\} \cup (\cG_1 \cap \cG_2)$, with $B_k = F_k \setminus \{1, 4\}$ if $1 \in F_k$ and $B_k = F_k \setminus \{2, 3\}$ if $2 \in F_k$. Note that for each set $F_{k}$ of this type, $\abs{B_k} = d - 1$.
\end{enumerate}
For this example, we have the small transversal set $J = \{1, 2\}$. Then some calculations give
\begin{enumerate}
    \item $\cT_J^1 = \emptyset$.
    \item $\cT_J^2$ contains the sets of Type 2 and 3.
    \item $\cT_J^3$ contains the sets of Type 4.
    \item $\cT_J^4 = \emptyset$.
    \item $\cT_J^5$ contains the sets of Type 1.
    \item $\cT_J^6 = \emptyset$.
\end{enumerate}
Note that
$$\cG_J = \cG_1 \cup \cG_2 = \binom{[n] \setminus J}{d}.$$
So we have $K = 0$. We can compute that
$$\abs{\cT_J^1} = 0,$$
$$\abs{\cT_J^2 \cup \cT_J^3} = \abs{\cG_1} + \abs{\cG_2} = \binom{n - 2}{d} + \binom{n - 4}{d - 2} = \binom{n - \abs{J}}{d} + \binom{n - 4}{d - 2},$$
$$\abs{\cT_J^4 \cup \cT_J^5} = \binom{n - 2}{d - 1} = (\abs{J} - 1) \binom{n - \abs{J}}{d - 1},$$
$$\abs{\cT_J^6} = 0.$$
Since $K=0$, this yields that~\cref{claim:T2T3} and~\cref{claim:T4T5} are sharp, which provides some evidence that the value $\binom{n-1}{d}+\binom{n-4}{d-2}$ is the correct answer.

As for~\cref{conj:TrueGeneralization}, fully resolving it remains challenging. In~\cref{thm:Verification}, we verify the special cases where $s \in \{0, 1, d\}$. In particular, for the case \(s = d\), we present two distinct types of proofs: one based on a double counting argument and the other utilizing an advanced tool introduced in recent work~\cite{2023partialShadow}. Furthermore, we demonstrate that when \(s = d\), the maximum size \(\binom{n-1}{d}\) can only be achieved if \(\mathcal{F}\) is a star of maximum size. A natural question arises as to whether a stronger stability result exists for~\cref{thm:d=s}, akin to the Hilton-Milner Theorem for the Erd\H{o}s-Ko-Rado theorem. Specifically, whether the size of \(\mathcal{F}\) would be significantly smaller than \(\binom{n-1}{d}\) if it is not a star. However, such a result does not hold. To illustrate this, consider \(A = \{2, 3, 4, \ldots, d, d+1, d+2\}\) and let \(\mathcal{C}\) consist of all \(d\)-subsets of \(A\). Then, the \((d+1)\)-uniform set system  
\[
\mathcal{F} := \{A\} \bigcup \bigg\{G \cup \{1\}: G \in \binom{[n] \setminus \{1\}}{d} \setminus \mathcal{C}\bigg\} 
\]  
satisfies the condition in~\cref{thm:d=s}, and \(\mathcal{F}\) is not a star. Nevertheless, \(|\mathcal{F}| = \binom{n-1}{d} - d \), which is very close to \(\binom{n-1}{d}\). 

Fortunately, as we proved in~\cref{thm:s=1}, such a strong stability result does exist when \(s=1\). From another perspective, attempting to extend the aforementioned construction indeed reduces \(|\mathcal{F}|\) by approximately \(n^{d-1}\). Therefore, we conjecture that if~\cref{conj:TrueGeneralization} holds, there might also exist a similar stability result. Specifically, for general \(s\), if \(\mathcal{F}\) is not a star, then its size would not exceed \(\binom{n-1}{d} - Cn^{d-s}\) for some positive constant $C$.

Nevertheless, we contend that a complete resolution of the entire~\cref{conj:TrueGeneralization} presents significant challenges and remains a highly nontrivial task. Even to completely settle the case \(d = s-1\) of this conjecture seems to require new ideas.

\bibliographystyle{abbrv}
\bibliography{FranklPach}
\end{document}